\newtheorem{lemma}{Lemma}[section]
\newtheorem{corollary}[lemma]{Corollary}
\newtheorem{theorem}[lemma]{Theorem}
\newtheorem{assumption}[lemma]{Standing Assumption}
\theoremstyle{definition} 
\newtheorem{definition}[lemma]{Definition}
\newtheorem{remark}[lemma]{Remark}
\newcommand{\Nat}{{\mathbb N}}
\newcommand\bul{\noindent$\bullet\ $}
\begin{document}

\title{Type-Decomposition of a Synaptic Algebra}

\author{David J. Foulis{\footnote{Emeritus Professor, Department of
Mathematics and Statistics, University of Massachusetts, Amherst,
MA; Postal Address: 1 Sutton Court, Amherst, MA 01002, USA;
foulis@math.umass.edu.}}\hspace{.05 in} and Sylvia
Pulmannov\'{a}{\footnote{ Mathematical Institute, Slovak Academy of
Sciences, \v Stef\'anikova 49, SK-814 73 Bratislava, Slovakia;
pulmann@mat.savba.sk. The second author was supported by Research and
Development Support Agency under the contract No. APVV-0178-11 and grant
VEGA 2/0059/12.}}}

\date{}

\maketitle

\begin{abstract}

\noindent A synaptic algebra is a generalization of the 
self-adjoint part of a von Neumann algebra. In this article 
we extend to synaptic algebras the type-I/II/III decomposition 
of von Neumann algebras, AW$\sp{\ast}$-algebras, and JW-algebras.
\end{abstract}

\section{Introduction} \label{sc:intro} 

Soon after laying rigorous Hilbert-space based foundations 
for quantum mechanics in his celebrated book \cite{vNqm},  John 
von Neumann wrote in an unpublished letter to Garrett 
Birkhoff,

\begin{quote}
``I would like to make a confession which may seem immoral: I 
do not believe absolutely in Hilbert space any more."
\end{quote}

\noindent As is authoritatively documented in \cite{Redei}, by the 
time this letter was written (1935), von Neumann had begun to 
focus on what is now called a type II$\sb{1}$ factor as the 
appropriate mathematical basis for quantum mechanics. Later  
von Neumann's advocacy of type II$\sb{1}$ factors was supplemented 
by the discovery that type III factors occur naturally in 
relativistic quantum field theory \cite{Haag}.

In this article we are going to study the type I/II/III 
decomposition theory for a so-called \emph{synaptic algebra}, 
which is a proper generalization of the self-adjoint part of 
a von Neumann algebra. We believe that our work casts considerable 
light on just what makes a type I/II/III decomposition work, not 
only in von Neumann algebras, but in many related algebraic 
structures as well. We note that a synaptic algebra can host the 
probability measures that were a main concern of von Neumann 
\cite[\S 2]{Redei} and it can serve as a value algebra for 
quantum-mechanical observables. 

A synaptic algebra (from the Greek \emph{sunaptein}, meaning 
to join together) \cite{FSynap, FPSynap, SymSA, PuNote} 
unites the notions of an order-unit normed space \cite[p. 69]
{Alf}, a special Jordan algebra \cite{McC}, 
a convex effect algebra \cite{GPBB}, and an orthomodular 
lattice \cite{Beran, Kalm}. The generalized Hermitian algebras, 
introduced and studied in \cite{GHAlg1, GHAlg2}, are special 
cases of synaptic algebras, and numerous additional examples can 
be found in the papers cited above. 

The JW-algebras of D. Topping \cite{Top65} are important special 
cases of synaptic algebras and they will motivate much of our 
work in this article. One of the significant ways in which 
synaptic algebras are more general than JW-algebras is that, 
whereas the orthomodular lattice (OML) of projections in a 
JW-algebra is complete \cite[Theorem 4]{Top65}, the OML of 
projections in a synaptic algebra need not be complete.

Our purpose in this article is twofold: \emph{First Project}: 
To show that a synaptic algebra with a complete OML of 
projections has sufficiently many properties in common with a 
JW-algebra to enable Topping's proof of his version of a 
type-I/II/III decomposition theorem \cite[Theorem 13]{Top65}. 
\emph{Second Project}: To show how the type-decomposition theory 
developed in \cite{FPType} applies to a synaptic algebra with a 
projection lattice satisfying the much weaker central 
orthocompleteness condition. For both projects, our main tool 
will be the notion of a \emph{type determining} (TD) subset 
of the projection lattice (Section \ref{sc:TD} below). 
   
\section{Some basic properties of a synaptic algebra} \label{sc:BPSA}

To carry out our two type-decomposition projects, we shall need 
only a portion of the theory of synaptic algebras as developed 
in \cite{FSynap, FPSynap, SymSA, PuNote}, and as a convenience for 
the reader, we devote this and the next two sections to a sketch 
of some of the parts of this theory that we shall require and to 
some of the corresponding notation and nomenclature. We use the 
notation $:=$ for ``equals by definition" and ``iff" abbreviates 
``if and only if."

\begin{assumption}
In this article we assume that $A$ is a synaptic algebra 
with enveloping algebra $R$ {\rm\cite[Definition 1.1]{FSynap}}. 
\end{assumption}

To help fix ideas, the reader might want to keep in mind the case 
in which $R$ is a von Neumann algebra and $A$ is the self-adjoint 
part of $R$. Some of the important properties of $A$ and $R$  are as 
follows: 

\smallskip

\bul $R$ is a real or complex linear associative algebra with unit 
element $1$ and $A$ is a real vector subspace of $R$. To avoid 
trivialities, we shall assume that $0\not=1$.

\smallskip

\bul $A$ is a partially ordered real vector space under $\leq$ 
and $0\leq 1\in A$.

\smallskip

\bul Let $a,b\in A$. Then the product $ab$ as calculated in 
$R$ may or may not belong to $A$.  We write $aCb$ iff $a$ 
and $b$ commute (i.e. $ab=ba$) and we define $C(a):=\{b
\in A:aCb\}$. If $B\subseteq A$, then $C(B):=\bigcap\sb{b
\in B}C(b)$.
 
\smallskip

\bul If $a\in A$, then $0\leq a\sp{2}\in A$. Thus, $A$ is a 
\emph{special Jordan algebra} under the Jordan product 
$a\circ b:=\frac12((a+b)\sp{2}-a\sp{2}-b\sp{2})=\frac12
(ab+ba)\in A$ for all $a,b\in A$. Therefore, if $a,b\in A$, 
then $ab+ba=2(a\circ b)\in A$ and  $aCb\Rightarrow ab=ba=a
\circ b=b\circ a\in A$. Also, if $ab=0$, then $aCb$ and 
$ba=0$. Moreover, $aba=2a\circ(a\circ b)-a\sp{2}\circ b\in A$ 
and if $0\leq b$, then $0\leq aba$.

\smallskip

\bul With the operations and partial order inherited from $A$, 
the set $C(A)$, called the \emph{center} of $A$, is a synaptic 
algebra with unit element $1$. As such, it is a commutative 
associative partially-ordered normed real linear algebra and 
it is its own enveloping algebra. We call $A$ a \emph{commutative} 
synaptic algebra iff $A=C(A)$.

\smallskip

\bul An element $p\in A$ is called a \emph{projection} iff 
$p\sp{2}=p$, and the set of all projections in $A$ is 
denoted by $P$. Under the partial order inherited from $A$, 
$P$ is an orthomodular lattice (OML) \cite{Beran, Kalm} with 
smallest element $0$, largest element $1$, and $p\mapsto p
\sp{\perp}:=1-p$ as the orthocomplementation. 

\smallskip

\bul If $p\in P$, then, with the operations and partial 
order inherited from $A$, $pAp :=\{pap:a\in A\}$ is a synaptic 
algebra with $pRp$ as its enveloping algebra and with $p$ 
as its unit element. The OML of projections in $pAp$ is 
$pAp\cap P=\{q\in P:q=pqp\}=\{q\in P:q=qp\}=\{q\in P:q\leq p\}$.

\smallskip 

An arbitrary cartesian product of synaptic algebras is again 
a synaptic algebra with coordinatewise operations and relations 
and with the cartesian product of the enveloping algebras of 
the factors as its enveloping algebra.

\section{Orthomodular lattices} \label{sc:OMLs}

In this section we review some facts about orthomodular 
lattices (OMLs) that we shall need in our study of the 
orthomodular projection lattice $P$ of the synaptic 
algebra $A$. More details in regard to OMLs can be found  
in \cite{Beran, Kalm}. 

\begin{assumption}
In this section, we assume that $L$ is an OML with smallest 
element $0$, largest element $1$, and $p\mapsto p\sp{\perp}$ 
as its orthocomplementation.
\end{assumption}

Let $p,q\in L$. We say that $q$ \emph{dominates} $p$, or 
equivalently, that $p$ is a \emph{subelement} of $q$ iff $p
\leq q$. If $p\leq q$ and $p\not=q$, we write $p<q$. As usual, 
$p\vee q$ and $p\wedge q$ will denote the supremum (least upper 
bound) and the infimum (greatest lower bound), respectively, of 
$p$ and $q$ in $L$. The two elements $p,q\in L$ are said to be 
\emph{orthogonal}, in symbols $p\perp q$, iff $p\leq q\sp{\perp}$. 

The \emph{$p$-interval} in $L$, defined and denoted by $L[0,p]
:=\{q\in L:q\leq p\}$ is a sublattice of $L$ and it is an OML 
in its own right with $q\mapsto q\sp{\perp\sb{p}}:=p\wedge q
\sp{\perp}$ as its orthocomplementation. If $Q\subseteq L[0,p]$, 
then $Q$ has a supremum in $L$ iff it has a supremum in $L[0,p]$, 
and the two suprema, if they exist, coincide. Likewise for infima, 
provided that $Q$ is not empty. Therefore, if the OML $L$ is 
\emph{complete} (i.e., every subset of $L$ has a supremum and 
an infimum in $L$) then the OML $L[0,p]$ is also complete.  

The elements $p,q\in L$ are called (\emph{Mackey}) \emph{compatible} 
iff there are elements $p\sb{1},q\sb{1},d\in L$ such that 
$p\sb{1}\perp q\sb{1}$, $p\sb{1}\perp d$, $p\sb{2}\perp d$, 
$p=p\sb{1}\vee p\sb{2}$, and $q=q\sb{1}\vee q\sb{2}$. For instance, 
if $p\leq q$, or if $p\perp q$, then $p$ and $q$ are compatible. 
The elements $p$ and $q$ are compatible iff $p$ can be written 
as $p=x\vee y$ with $x\in L[0,q]$ and $y\in L[0,q\sp{\perp}]$. The 
set of all elements in $L$ that are compatible with every element 
in $L$ is called the \emph{center} of $L$. The center of $L$ is a 
sublattice of $L$, closed under orthocomplementation, and as such 
it is a boolean algebra (a complemented distributive lattice). 
Computations in $L$ are facilitated by the fact that, if one of 
the elements $p,q,r\in L$ is compatible with the other two, then 
the distributive relations $p\vee(q\wedge r)=(p\vee q)\wedge
(p\vee r)$ and $p\wedge(q\vee r)=(p\wedge q)\vee(p\wedge r)$ 
hold \cite{OMLNote}.

If $c$ belongs to the center of $L$ and $p\in L$, then $p\wedge c$ 
belongs to the center of $L[0,p]$. If, conversely, for every $p\in L$, 
every element in the center of $L[0,p]$ has the form $p\wedge c$ for 
some $c$ in the center of $L$, then $L$ is said to have the 
\emph{relative center property} \cite{Chev}.  

\begin{remark} \label{rm:PasanEA}
The OML $L$ can be regarded as a lattice \emph{effect algebra} 
\cite{FBeffect, ZRlatEA} by defining the \emph{orthosum} $p\oplus q$ 
for $p,q\in L$ iff $p\perp q$, in which case $p\oplus q:=p
\vee q$. Then the partial order on $L$ coincides with the 
effect-algebra partial order, the orthocomplementation on $L$ 
coincides with the effect-algebra orthosupplementation, and 
the structure of $L$ as an effect algebra determines its 
structure as an OML. In this way the theory of effect algebras 
\cite{FBeffect, COEA, FPType, HandD, GPBB, JPorthocompl, PtPu, 
ZRlatEA} can be applied to $L$. 
\end{remark}  

A family $(p\sb{i})\sb{i\in I}\subseteq L$ in $L$ is said to be 
\emph{pairwise orthogonal} iff, for all $i,j\in I$, $i\not=j
\Rightarrow p\sb{i}\perp p\sb{j}$. Regarding $L$ as an effect 
algebra and using standard effect-algebra terminology (e.g., 
\cite[p. 286]{COEA}), we have the following: A family in $L$ is 
\emph{orthogonal} iff it is pairwise orthogonal, such an 
orthogonal family is \emph{orthosummable} iff it has a supremum 
in $L$, and if the family is orthosummable, then its 
\emph{orthosum} is its supremum. 

If every orthogonal family in an effect algebra is orthosummable, 
then the effect algebra is called \emph{orthocomplete} \cite
{JPorthocompl}. By a theorem of S. Holland \cite{SSH70}, $L$ 
is orthocomplete as an effect algebra iff it is complete as a 
lattice.

The OML $L$ is said to be \emph{modular} iff, for all $p,q,r\in L$, 
$p\leq r\Rightarrow p\vee(q\wedge r)=(p\vee q)\wedge r$; it is 
called \emph{locally modular} \cite[p. 28]{Top65} iff, for every 
nonzero central element $c\in L$, there is a nonzero $p\in L[0,c]$ 
such that $L[0,p]$ is modular.

If $p,q\in L$, $p\vee q=1$, and $p\wedge q=0$, then $p$ and 
$q$ are called \emph{complements} of each other in $L$. For 
instance, $p$ and $p\sp{\perp}$ are complements in $L$. Two 
elements of $L$ that share a common complement are said to be 
\emph{perspective}. If $p$ and $q$ are perspective in the 
OML $L[0,p\vee q]$, then $p$ and $q$ are called \emph{strongly 
perspective}. The transitive closure of the relation of 
perspectivity is an equivalence relation on $L$ called 
\emph{projectivity}; thus $p$ and $q$ are projective iff 
there is a finite sequence $e\sb{1}, e\sb{2},...,e\sb{n}
\in L$ such that $p=e\sb{1}$, $q=e\sb{n}$, and $e\sb{i}$ is 
perspective to $e\sb{i+1}$ for $i=1,2,...,n-1$. 

\begin{lemma} \label{lm:strongpimpliesp}
Let $r\in L$ and $p,q\in L[0,r]$. Then{\rm: (i)} If $p$ and 
$q$ are perspective in $L[0,r]$, then $p$ and $q$ are 
perspective in $L$. {\rm (ii)} If $p$ and $q$ are strongly 
perspective in $L$, then they are perspective in $L$. {\rm
(iii)} If $p$ and $q$ are strongly perspective in $L$, then 
they are strongly perspective in $L[0,r]$.
\end{lemma}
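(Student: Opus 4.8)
The plan is to prove part~(i) directly by exhibiting a common complement in $L$, and then to deduce (ii) and (iii) from it by routine manipulation of $p$-intervals.

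For (i), I would start from a common complement $d\in L[0,r]$ of $p$ and $q$ in $L[0,r]$, so that $p\vee d=q\vee d=r$ and $p\wedge d=q\wedge d=0$; these joins and meets are the same whether computed in $L[0,r]$ or in $L$, because $L[0,r]$ is a sublattice of $L$. The key step is the claim that $e:=d\vee r^{\perp}$ is a common complement of $p$ and $q$ in $L$. Since $d\le r$ we have $d\perp r^{\perp}$, and likewise $p\perp r^{\perp}$ and $q\perp r^{\perp}$, so $r^{\perp}$ is compatible with each of $d$, $p$, $q$. Then $p\vee e=(p\vee d)\vee r^{\perp}=r\vee r^{\perp}=1$, and similarly $q\vee e=1$; and applying the distributive law recalled above to the triple $p$, $d$, $r^{\perp}$ --- which is allowed because $r^{\perp}$ is compatible with the other two --- gives $p\wedge e=p\wedge(d\vee r^{\perp})=(p\wedge d)\vee(p\wedge r^{\perp})=0\vee 0=0$ (using $p\wedge r^{\perp}\le r\wedge r^{\perp}=0$), and similarly $q\wedge e=0$. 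Hence $e$ is a common complement of $p$ and $q$ in $L$, so $p$ and $q$ are perspective in $L$.

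For (ii), I would observe that $p,q\in L[0,p\vee q]$, so the hypothesis that $p$ and $q$ are strongly perspective in $L$ is, by definition, the statement that $p$ and $q$ are perspective in $L[0,p\vee q]$, which is precisely the hypothesis of (i) with $r$ replaced by $p\vee q$; thus (i) gives that $p$ and $q$ are perspective in $L$. For (iii), since $p\le r$ and $q\le r$ we have $p\vee q\le r$, and consequently the $(p\vee q)$-interval of $L[0,r]$ equals $L[0,p\vee q]$ (as an OML, with the same orthocomplementation); therefore the assertion that $p$ and $q$ are strongly perspective in $L[0,r]$ means exactly that $p$ and $q$ are perspective in $L[0,p\vee q]$, which is the hypothesis. (This argument in fact shows (iii) is an equivalence.)

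The only point that calls for any ingenuity is the choice $e=d\vee r^{\perp}$ in (i): it works because $r^{\perp}$ is orthogonal to, hence compatible with, each of $d$, $p$, $q$, which is exactly what is needed to invoke the distributive law when evaluating $p\wedge e$ and $q\wedge e$. Everything else reduces to the facts, recalled above, that a $p$-interval is a sublattice in which suprema and infima agree with those of $L$, and that joins and meets of two elements always exist in the lattice $L$.
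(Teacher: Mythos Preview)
Your proof is correct and follows essentially the same approach as the paper: for (i) you take the common complement $d$ in $L[0,r]$ and show that $d\vee r^{\perp}$ is a common complement in $L$, using compatibility of $r^{\perp}$ with $p$, $q$, $d$ to justify the distributive step; parts (ii) and (iii) are then deduced exactly as in the paper by specializing $r$ to $p\vee q$ and by identifying $(L[0,r])[0,p\vee q]$ with $L[0,p\vee q]$. Your write-up is slightly more explicit than the paper's about why the distributive law applies, but the argument is the same.
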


\begin{proof}
(i) Suppose there exists $x\in L[0,r]$ such that $p\vee x=
q\vee x=r$ and $p\wedge x=q\wedge x=0$ and put $y:=x\vee 
r\sp{\perp}$. Then $p\vee y=p\vee(x\vee r\sp{\perp})=
r\vee r\sp{\perp}=1$, and since $p,x\leq r$, $p\wedge y
=p\wedge(x\vee r\sp{\perp})=(p\wedge x)\vee(p\wedge r
\sp{\perp})=0\vee 0=0$. 
Likewise  $q\vee y=1$ and $q\wedge y=0$.

(ii) Part (ii) follows from (i) with $r:=p\vee q$.

(iii) If $p$ and $q$ are strongly perspective in $L$, 
then they are perspective in $L[0,p\vee q]=(L[0,r])[0,
p\vee q]$, whence they are strongly perspective in 
$L[0,r]$. 
\end{proof}

\begin{theorem} \label{th:modularcondition}
The following conditions are mutually equivalent{\rm:}
\begin{enumerate}
\item $L$ is modular.
\item If $p,q\in L$ are perspective, then $p$ and $q$ 
 are strongly perspective.
\item If $p,q\in L$, $p\leq q$, and $p$ is perspective to $q$, 
then $p=q$.
\end{enumerate}
\end{theorem}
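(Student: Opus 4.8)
The plan is to prove the cycle (1)$\Rightarrow$(2)$\Rightarrow$(3)$\Rightarrow$(1).

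For (1)$\Rightarrow$(2), suppose $L$ is modular and $p,q$ are perspective via a common complement $x$, so $p\vee x=q\vee x=1$ and $p\wedge x=q\wedge x=0$. I want to show $p$ and $q$ are perspective inside $L[0,p\vee q]$. The natural candidate for a common complement of $p$ and $q$ in $L[0,p\vee q]$ is $z:=x\wedge(p\vee q)$. Using modularity applied to $p\le p\vee q$ (together with $x$) I expect $p\vee z=p\vee(x\wedge(p\vee q))=(p\vee x)\wedge(p\vee q)=1\wedge(p\vee q)=p\vee q$, and similarly $q\vee z=p\vee q$; and $p\wedge z=p\wedge x\wedge(p\vee q)=0$, similarly $q\wedge z=0$. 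So $z$ is a common complement of $p$ and $q$ in $L[0,p\vee q]$, i.e. they are strongly perspective. The one subtlety is making sure the modular identity is applied in the correct direction with the correct element playing the role of the ``small'' element; I should double-check the case analysis but it is routine.

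For (2)$\Rightarrow$(3), suppose $p\le q$ and $p$ is perspective to $q$. By hypothesis they are then strongly perspective, so there is $x\in L[0,q]$ (note $p\vee q=q$) with $p\vee x=x\vee q=q$... wait, $x\le q$ and $x\vee q=q$ forces nothing, but $x\vee p=q$ and $x\wedge p=0$ and also $x$ must be a complement of $q$ in $L[0,q]$, meaning $x\vee q=q$ and $x\wedge q=0$; since $x\le q$ this gives $x=x\wedge q=0$. Then $q=x\vee p=0\vee p=p$, as desired. This step is short.

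For (3)$\Rightarrow$(1), I will argue contrapositively, or rather directly establish modularity from (3). Recall the classical fact that an OML is modular iff it contains no sublattice isomorphic to the pentagon; more useful here is the standard lemma that in any lattice, if $p\le r$ then $p\vee(q\wedge r)\le(p\vee q)\wedge r$ always holds, so modularity fails exactly when there exist $p\le r$ with $p':=p\vee(q\wedge r)$ strictly below $r':=(p\vee q)\wedge r$. The idea is to show $p'$ and $r'$ are perspective (in fact with common complement related to $q$), whence by (3) they would be equal, contradiction. Concretely, I expect to verify that $q\wedge r'$... hmm, I should instead take the well-known route: set $a=p\vee(q\wedge r)$, $b=(p\vee q)\wedge r$ with $a\le b$, and show $q$ furnishes a common complement of $a$ and $b$ within $L[0,a\vee q]$ or even in $L$, using the OML absorption and the orthomodular law. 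The hard part of the whole proof is precisely this implication (3)$\Rightarrow$(1): producing the perspectivity of the two ends of a failed modular inequality requires a careful computation with joins, meets, and the orthomodular identity $u\le v\Rightarrow v=u\vee(v\wedge u^{\perp})$, and getting the common complement exactly right. I would model the computation on the proof that a relatively complemented modular lattice has transitive perspectivity, adapted to the OML setting, citing \cite{Beran, Kalm, OMLNote} for the lattice-theoretic identities used.
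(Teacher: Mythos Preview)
The paper does not prove this theorem directly: it simply cites \cite[Theorem 2]{SSH64} for (i)$\Leftrightarrow$(ii) and \cite[Lemma 20]{Top65} for (i)$\Leftrightarrow$(iii). Your attempt to give a self-contained argument is therefore a genuinely different route, and your proofs of (1)$\Rightarrow$(2) and (2)$\Rightarrow$(3) are clean and correct.

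The implication (3)$\Rightarrow$(1), however, has a concrete gap. With $p\le r$, $a=p\vee(q\wedge r)$, $b=(p\vee q)\wedge r$, one does get $a\vee q=b\vee q=p\vee q$, but $a\wedge q=b\wedge q=q\wedge r$, which need not be $0$. So $q$ is \emph{not} a common complement of $a$ and $b$ in $L[0,p\vee q]$ (nor in $L$), contrary to what you suggest. What the pentagon argument actually gives is that $q$ is a common complement of $a$ and $b$ in the interval $[q\wedge r,\,p\vee q]$, i.e.\ perspectivity in an interval not starting at $0$. Condition (3), however, concerns perspectivity in $L$.

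The missing step is the standard OML interval isomorphism: for $u\le v$ the map $x\mapsto x\wedge u^{\perp}$ is an OML isomorphism of $[u,v]$ onto $L[0,v\wedge u^{\perp}]$ (with inverse $y\mapsto y\vee u$), since $u$ is compatible with every element of $[u,v]$. Applying this with $u=q\wedge r$, $v=p\vee q$ sends $a<b$ to comparable elements $a'<b'$ that are perspective in $L[0,v\wedge u^{\perp}]$, and then Lemma~\ref{lm:strongpimpliesp}(i) gives perspectivity of $a'$ and $b'$ in $L$, contradicting (3). You allude to ``a careful computation with\ldots the orthomodular identity,'' but this specific transfer is the crux and should be stated explicitly; without it the argument is incomplete.
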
 

\begin{proof}
That (i) $\Leftrightarrow$ (ii) follows from \cite[Theorem 2]
{SSH64} and the equivalence (i) $\Leftrightarrow$ (iii) follows 
from \cite[Lemma 20]{Top65}. 
\end{proof}

\begin{theorem} \label{th:contingeom}
Suppose that $L$ is both complete and modular. Then{\rm:}
\begin{enumerate}
\item $L$ is a continuous geometry {\rm\cite{vNcg}}.
\item Perspectivity is transitive on $L$, i.e., perspectivity 
 coincides with projectivity.
\item If $p,q\in L$, $p\leq q$, and $p$ and $q$ are projective  
 in $L$, then $p=q$.
\item Any orthogonal family of nonzero elements in $L$ such 
 that any two of the elements in the family are projective  
 is necessarily finite.
\end{enumerate} 
\end{theorem}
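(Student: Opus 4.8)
The plan is to derive all four statements from the classical theory of continuous geometries, for which the completeness and modularity of $L$ are exactly the defining hypotheses. First I would observe that a complete modular OML is, by definition, a complete complemented modular lattice, so von Neumann's coordinatization theory \cite{vNcg} applies directly: this gives (i) essentially for free, modulo checking that ``continuous geometry'' in the sense of \cite{vNcg} is precisely a complete complemented modular lattice in which the lattice operations are continuous in the order-convergence sense — and the latter continuity is a standard consequence of completeness plus modularity (the upper and lower continuity axioms follow from the modular law applied to directed families). So step one is just to quote the relevant structure theorem and note that the hypotheses match.

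For (ii), I would use Theorem \ref{th:modularcondition}: since $L$ is modular, perspectivity implies strong perspectivity. In a continuous geometry one has the classical transitivity-of-perspectivity theorem (von Neumann), so perspectivity is already transitive and hence coincides with its transitive closure, which is projectivity. Alternatively, and perhaps more self-containedly, I would invoke the fact that in a continuous geometry any two perspective elements have a common complement that can be chosen inside their join, and then a standard ``common complement'' argument (the Schreier-type refinement available in modular lattices) shows that a chain of perspectivities collapses to a single one. Then (iii) is immediate: if $p \le q$ and $p,q$ are projective, then by (ii) they are perspective, and by Theorem \ref{th:modularcondition}(iii) (the equivalence (i) $\Leftrightarrow$ (iii) there) $p \le q$ together with perspectivity of $p$ and $q$ forces $p = q$.

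For (iv), suppose $(p_i)_{i \in I}$ is an orthogonal family of nonzero pairwise-projective elements. By completeness the family is orthosummable; fix any index $i_0$. For each finite subset $F \subseteq I$ containing $i_0$, the element $\bigvee_{i \in F} p_i$ strictly dominates $\bigvee_{i \in F \setminus \{i_0\}} p_i$ (since $p_{i_0} \neq 0$ and the family is orthogonal, so the orthosum genuinely grows). The key point is that all these partial orthosums are projective to one another and in fact to $p_{i_0}$-sized ``pieces'': more precisely, using that all $p_i$ are projective to $p_{i_0}$ and the additivity of projectivity over orthogonal sums (a standard continuous-geometry fact, or derivable from the parallelogram law for perspectivity), one sees that an infinite such family would produce, via the orthosum over all of $I$, an element $p \le \bigvee_{i\in I} p_i$ that is projective to a proper suborthosum, contradicting (iii). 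I would phrase this cleanly as: if $I$ were infinite, split it as a countable increasing union and use continuity of the lattice operations (part (i)) to get $\bigvee_{i \in I} p_i = \bigvee_n s_n$ where $s_n$ are strictly increasing partial orthosums; each $s_n$ is projective to $s_{n+1}$ restricted appropriately, and passing to the limit yields two projective comparable elements that are unequal, contradicting (iii).

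The main obstacle I expect is (iv), specifically making the ``additivity of projectivity over orthogonal families'' rigorous and invoking order-continuity at the right moment; the danger is a hand-wave in the passage to the infinite orthosum. The cleanest route is probably to reduce to the countable case, apply the continuity axiom of the continuous geometry (part (i)) to identify the supremum of the partial sums with the full orthosum, and then quote the continuous-geometry fact that a strictly increasing sequence of projective elements cannot have its supremum projective to all its terms — or, failing a ready citation, to build the contradiction directly from (iii) by exhibiting a single pair $p < q$ with $p$ and $q$ projective. Parts (i)--(iii) should be routine given Theorems \ref{th:modularcondition} and \ref{th:contingeom}'s hypotheses and the literature on continuous geometries; it is only (iv) that requires genuine care.
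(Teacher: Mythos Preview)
Your handling of (i) contains a genuine gap. You assert that ``the upper and lower continuity axioms follow from the modular law applied to directed families,'' treating the passage from \emph{complete modular orthocomplemented lattice} to \emph{continuous geometry} as a routine verification. It is not: this is precisely the content of Kaplansky's theorem \cite{Kapcg}, which the paper cites for (i). The continuity (join- and meet-continuity for directed families) does \emph{not} follow from completeness and modularity alone; Kaplansky's argument makes essential use of the orthocomplementation, and the proof is nontrivial. Attributing (i) to von Neumann's coordinatization theory in \cite{vNcg} is a misreading --- von Neumann \emph{assumed} continuity as an axiom, and it was Kaplansky who later showed it is automatic in the orthocomplemented case.

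Once (i) is in hand, the paper simply cites von Neumann's book for (ii), (iii), and (iv) individually (Theorems 5.16, 4.4, and 3.8 of \cite{vNcg}, respectively). Your derivation of (iii) from (ii) together with Theorem~\ref{th:modularcondition} is a correct and slightly more self-contained alternative. Your proposed argument for (iv), however, is both unnecessary and incomplete: the ``additivity of projectivity over orthogonal families'' and the limiting step you flag as dangerous are indeed the hard parts, and you do not resolve them. Since (iv) is already a theorem in \cite{vNcg} for any continuous geometry, the clean route --- and the one the paper takes --- is simply to cite it once (i) is secured via Kaplansky.
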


\begin{proof}
Part (i) is a classic result of I. Kaplansky \cite{Kapcg},   
(ii) is \cite[Theorem 5.16]{vNcg}, (iii) is \cite[Theorem 4.4]
{vNcg}, and (iv) is \cite[Theorem 3.8]{vNcg}. 
\end{proof}

\section{The orthomodular lattice of projections} \label{sc:OMLP} 

Owing to the fact that $P\subseteq A$, the OML $P$ of projections in 
$A$ acquires several special properties, among which are the 
following:  

\emph{For all $p,q\in P${\rm: (i)} $p\leq q\Leftrightarrow pq=p
\Leftrightarrow qp=p\Leftrightarrow p=qpq\Leftrightarrow p=pqp$. 
{\rm(ii)} If $p\leq q$, then $q-p=q\wedge p\sp{\perp}=qp\sp{\perp}=p
\sp{\perp}q$. {\rm(iii)} If $pCq$, then $p\wedge q=pq=qp$ and $p\vee q
=p+q-pq$. {\rm(iv)} $p\perp q$ iff $p+q\leq 1$ iff $p+q=p\vee q$ 
iff pq=0. {\rm(v)} $p$ and $q$ are compatible iff $pCq$. {\rm(vi)} 
$C(A)=C(P)$ {\rm\cite[p. 242]{FPSynap}}. {\rm(vii)} The center of 
the OML $P$ is $P\cap C(P)=P\cap C(A)$ and it coincides with the 
boolean algebra of projections in the center $C(A)$ of $A$. 
{\rm(viii)} A projection $c\in P$ is \emph{central}, i.e., it 
belongs to the center $P\cap C(A)=P\cap C(P)$ of $P$, iff $P=
P[0,c]+P[0,c\sp{\perp}]:=\{x+y:x\in P[0,c], y\in P[0,c
\sp{\perp}]\}$. {\rm (ix)} If $d\in P\cap C(A)$, then $pd=
p\wedge d$ belongs to the center of $P[0,p]$. {\rm (x)} If 
$c\in P\cap C(A)$, then the center of $P[0,c]$ is $\{cd:d
\in P\cap C(A)\}=(P\cap C(A))[0,c]$. {\rm(xi)} If $P$ is 
complete, then it has the relative center property 
{\rm\cite[Theorem 8.7]{SymSA}}; hence the center of 
$P[0,p]$ is $\{pd:d\in P\cap C(A)\}$. {\rm(xii)} The 
$p$-interval $P[0,p]$ is the OML of projections in the 
synaptic algebra $pAp$.} 

If $p\sb{1},p\sb{2},...,p\sb{n}$ is a finite orthogonal 
sequence in $P$, then we refer to $p\sb{1}+p\sb{2}+\cdots+
p\sb{n}=p\sb{1}\vee p\sb{2}\vee \cdots\vee p\sb{n}$ as an 
\emph{orthogonal sum}.

\begin{definition}
 The family $(p\sb{i})\sb{i\in I}\subseteq P$ is called 
\emph{centrally orthogonal} iff there is an orthogonal 
family $(c\sb{i})\sb{i\in I}\subseteq P\cap C(A)$ of 
projections in the center $C(A)=C(P)$ of $A$ such that 
$p\sb{i}\leq c\sb{i}$ for every $i\in I$. We say that $P$ 
is \emph{centrally orthocomplete} iff every centrally 
orthogonal family in $P$ has a supremum in $P$.
\end{definition}

Clearly, every centrally orthogonal family is orthogonal, 
and if $P$ is complete, then it is centrally orthocomplete.  
If $P$ is centrally orthocomplete, then the center $P\cap C(P) 
=P\cap C(A)$ is a complete boolean algebra; moreover, for 
each $a\in A$, there is a smallest central projection 
$c\in P\cap C(A)$ such that $a=ac$ \cite[Lemma 6.5 and 
Definition 6.6]{SymSA}. 

\begin{definition} \label{df:centcover}
Suppose that $P$ is centrally orthocomplete. For each 
$a\in A$, the smallest central projection $c\in P\cap 
C(A)$ such that $a=ac$ is called the \emph{central 
cover} of $a$ and denoted by $\gamma a$.
\end{definition}

\noindent Thus, if $P$ is centrally orthocomplete, $a\in A$, 
and $c\in P\cap C(A)$, then $a=ac \Leftrightarrow\gamma a
\leq c$. The restriction of the central cover mapping 
$\gamma\colon A\to P\cap C(A)$ to $P$ is order preserving,  
it preserves arbitrary existing suprema in $P$, and if 
$p\in P$ and $c\in P\cap C(A)$, then $\gamma(p\wedge c)=
\gamma p\wedge c$ \cite[Theorem 6.7]{SymSA}. 

We note that, if $P$ is centrally orthocomplete, then a 
family $(p\sb{i})\sb{i\in I}\subseteq P$ is centrally 
orthogonal iff the family $(\gamma p\sb{i})\sb{i\in I}$ of 
central covers is orthogonal, and it follows that $(p\sb{i})
\sb{i\in I}$ is centrally orthogonal iff it is pairwise 
centrally orthogonal in the sense that, for $i,j\in I$, 
$i\not=j$ implies that the pair consisting of $p\sb{i}$ 
and $p\sb{j}$ is centrally orthogonal. The following lemma 
and theorem address the issue of how these notions 
relativize to an interval $P[0,p]$.

\begin{lemma} \label{lm:relativelycentorth}
Let $p\in P$, let $(p\sb{i})\sb{i\in I}$ be a family 
of projections in $P[0,p]$, and suppose that $(p\sb{i})
\sb{i\in I}$ is centrally orthogonal in $P$. Then 
$(p\sb{i})\sb{i\in I}$ is centrally orthogonal in 
$P[0,p]$.
\end{lemma}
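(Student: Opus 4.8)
The plan is to produce explicitly, out of the data witnessing that $(p\sb{i})\sb{i\in I}$ is centrally orthogonal in $P$, a family of projections that witnesses central orthogonality in $P[0,p]$. By hypothesis there is an orthogonal family $(c\sb{i})\sb{i\in I}\subseteq P\cap C(A)$ with $p\sb{i}\leq c\sb{i}$ for all $i\in I$; I would take $d\sb{i}:=pc\sb{i}$ as the candidate dominating family and then verify three things: that each $d\sb{i}$ lies in the center of $P[0,p]$, that $p\sb{i}\leq d\sb{i}$, and that $(d\sb{i})\sb{i\in I}$ is pairwise orthogonal.

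For the first, since $c\sb{i}\in C(A)$ we have $pCc\sb{i}$, so by property (iii) of $P$, $pc\sb{i}=p\wedge c\sb{i}\in P[0,p]$, and property (ix) says precisely that this element belongs to the center of $P[0,p]$. The domination $p\sb{i}\leq d\sb{i}$ is immediate, since $p\sb{i}\leq p$ and $p\sb{i}\leq c\sb{i}$ together give $p\sb{i}\leq p\wedge c\sb{i}=d\sb{i}$. For orthogonality, given $i\not=j$, centrality of $c\sb{i}$ lets it be moved past $p$ in the product, so $d\sb{i}d\sb{j}=(pc\sb{i})(pc\sb{j})=p\sp{2}c\sb{i}c\sb{j}=pc\sb{i}c\sb{j}$; and $c\sb{i}c\sb{j}=0$ since $c\sb{i}\perp c\sb{j}$, whence $d\sb{i}\perp d\sb{j}$. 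Assembling these three facts, $(d\sb{i})\sb{i\in I}$ is an orthogonal family of projections lying in the center of $P[0,p]$ and dominating $(p\sb{i})\sb{i\in I}$, which is exactly the assertion that $(p\sb{i})\sb{i\in I}$ is centrally orthogonal in $P[0,p]$.

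I would not expect any real obstacle here; this is a short verification rather than a substantive result. The only point calling for care is the correct reading of ``centrally orthogonal in $P[0,p]$'': the relevant dominating projections must come from the center of $P[0,p]$ (equivalently, by property (vii) applied to the synaptic algebra $pAp$, from the projections of the center of $pAp$), and since $p$ need not be central in $A$, this center can strictly contain $\{p\wedge c:c\in P\cap C(A)\}$, with equality guaranteed only when $P$ is complete (property (xi)). The argument above sidesteps this, because it uses only the easy fact, supplied by property (ix), that $p\wedge c$ lies in the center of $P[0,p]$ whenever $c\in P\cap C(A)$; no completeness or relative-center hypothesis is needed.
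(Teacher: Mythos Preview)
Your proof is correct and follows essentially the same approach as the paper: both take the witnessing family $(c\sb{i})\sb{i\in I}\subseteq P\cap C(A)$ and pass to $(pc\sb{i})\sb{i\in I}$, checking that this is a pairwise orthogonal family in the center of $P[0,p]$ dominating $(p\sb{i})\sb{i\in I}$. Your version is simply more detailed, and your closing remark about the relative center is a nice clarification that the paper leaves implicit.
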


\begin{proof}
By hypothesis, there exists a pairwise orthogonal family 
$(c\sb{i})\sb{i\in I}$ of central projections in $P$ such 
that $p\sb{i}\leq c\sb{i}$ for all $i\in I$. Then 
$(pc\sb{i})\sb{i\in I}$ is a pairwise orthogonal family of 
central projections in $P[0,p]$ and $p\sb{i}\leq pc\sb{i}$ 
for all $i\in I$.
\end{proof}

\begin{theorem} \label{th:relativelycentorth}
Suppose that $P$ is centrally orthocomplete, let $p\in P$, 
let $(p\sb{i})\sb{i\in I}$ be a family of projections in 
$P[0,p]$, and suppose that at least one of the following 
conditions holds{\rm:} $P$ is complete or $p\in P\cap C(A)$. 
Then{\rm: (i)} The family $(p\sb{i})\sb{i\in I}$ is centrally 
orthogonal in $P[0,p]$ iff it is centrally orthogonal in $P$. 
{\rm (ii)} $P[0,p]$ is centrally orthocomplete. 
\end{theorem}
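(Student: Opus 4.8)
The plan is to dispose of the two easy implications first and then concentrate on the single substantive point. In part~(i) the implication ``centrally orthogonal in $P$ $\Rightarrow$ centrally orthogonal in $P[0,p]$'' is exactly Lemma~\ref{lm:relativelycentorth}, so only the converse needs an argument. Part~(ii) I would deduce from part~(i): if $P$ is complete, then by the interval results of Section~\ref{sc:OMLs} the interval $P[0,p]$ is again a complete OML --- hence centrally orthocomplete, since a complete OML of projections is trivially centrally orthocomplete (every centrally orthogonal family is orthogonal and so has a supremum); while if instead $p\in P\cap C(A)$, then given a family $(p\sb i)\sb{i\in I}\subseteq P[0,p]$ that is centrally orthogonal in $P[0,p]$, part~(i) makes it centrally orthogonal in $P$, so $\bigvee\sb i p\sb i$ exists in $P$; since each $p\sb i\leq p$ this supremum lies in $P[0,p]$, and by the interval result of Section~\ref{sc:OMLs} it is also the supremum there, so $P[0,p]$ is centrally orthocomplete.

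Everything thus comes down to proving the converse in part~(i): if $(p\sb i)\sb{i\in I}\subseteq P[0,p]$ is centrally orthogonal in $P[0,p]$, then it is centrally orthogonal in $P$. The data is a pairwise orthogonal family $(e\sb i)\sb{i\in I}$ in the center of $P[0,p]$ with $p\sb i\leq e\sb i$ for every $i$. First I would use property~(x) of Section~\ref{sc:OMLP} (if $p$ is central) or the relative center property~(xi) (if $P$ is complete) to write each $e\sb i$ as $e\sb i=pd\sb i$ with $d\sb i\in P\cap C(A)$. The catch is that the $d\sb i$ need not be pairwise orthogonal, so they do not yet witness central orthogonality in $P$; the fix is to shrink them to central covers (available since $P$ is centrally orthocomplete). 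Set $z\sb i:=\gamma e\sb i\in P\cap C(A)$. Then $e\sb i\leq p$ gives $z\sb i\leq\gamma p$; $e\sb i=pd\sb i\leq d\sb i$ with $d\sb i$ central gives $z\sb i\leq d\sb i$; and $p\sb i\leq e\sb i\leq z\sb i$.

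The heart of the matter is showing $z\sb i\perp z\sb j$ for $i\neq j$. From $e\sb i\perp e\sb j$ in $P[0,p]$ one gets $e\sb i\perp e\sb j$ in $P$ (orthogonality in the sublattice implies orthogonality in $P$), hence $e\sb ie\sb j=0$ by property~(iv), i.e.\ $pd\sb id\sb j=0$ since the central $d\sb i$ commute with $p$; as $d\sb id\sb j$ is central this yields $d\sb i\wedge d\sb j=d\sb id\sb j\leq p\sp{\perp}$. Therefore $z\sb i\wedge z\sb j\leq d\sb i\wedge d\sb j\leq p\sp{\perp}$ and also $z\sb i\wedge z\sb j\leq\gamma p$. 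The key observation --- and the reason the naive attempt with the $d\sb i$ fails --- is that one should \emph{not} hope for $\gamma p\wedge p\sp{\perp}=0$ (in general it is not $0$), but should instead note that $z\sb i\wedge z\sb j$ is a \emph{central} projection that is orthogonal to $p$. Then $p\leq(z\sb i\wedge z\sb j)\sp{\perp}\in P\cap C(A)$, so by minimality of the central cover $\gamma p\leq(z\sb i\wedge z\sb j)\sp{\perp}$, which together with $z\sb i\wedge z\sb j\leq\gamma p$ forces $z\sb i\wedge z\sb j=0$. Hence $(z\sb i)\sb{i\in I}$ is a pairwise orthogonal family in $P\cap C(A)$ dominating the $p\sb i$, so $(p\sb i)\sb{i\in I}$ is centrally orthogonal in $P$, completing part~(i).

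I expect that last orthogonality computation to be the only real obstacle; the rest is routine manipulation with the properties collected in Sections~\ref{sc:OMLs}--\ref{sc:OMLP}, together with the elementary facts that $\gamma$ is order preserving on $P$ and that $p\leq c\Leftrightarrow\gamma p\leq c$ for $c\in P\cap C(A)$.
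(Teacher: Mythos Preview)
Your proof is correct. The overall architecture matches the paper's: use the relative center property (or property~(x) when $p$ is central) to write the $P[0,p]$-central witnesses as $e\sb i=pd\sb i$ with $d\sb i\in P\cap C(A)$, establish the converse implication in~(i), and then derive~(ii) from~(i) together with central orthocompleteness of $P$.

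The one place you diverge is in the orthogonality computation. You pass to $z\sb i:=\gamma e\sb i$ and argue that $z\sb iz\sb j\leq d\sb id\sb j\leq p\sp{\perp}$ while $z\sb iz\sb j\leq\gamma p$, then use minimality of the central cover to force $z\sb iz\sb j=0$. The paper instead observes directly that $pc\sb ic\sb j=0$ gives $p\sb i\leq pc\sb i\leq c\sb j\sp{\,\perp}c\sb i$ and $p\sb j\leq pc\sb j\leq c\sb i\sp{\,\perp}c\sb j$, with $c\sb j\sp{\,\perp}c\sb i\perp c\sb i\sp{\,\perp}c\sb j$, so the family is \emph{pairwise} centrally orthogonal in $P$; it then invokes the remark (available since $P$ is centrally orthocomplete) that pairwise central orthogonality is equivalent to central orthogonality. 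Your route produces a single global witnessing family $(z\sb i)$ in one stroke; the paper's route avoids computing with $\gamma$ until the very end but needs the pairwise-to-global step. Both are short and either could replace the other.
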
 

\begin{proof}
Assume the hypotheses. If $P$ is complete, then it has the 
relative center property, so the center of $P[0,p]$ is 
$\{pc:c\in P\cap C(A)\}$, and the same conclusion holds if 
$p\in P\cap C(A)$.

(i) Suppose that $(p\sb{i})\sb{i\in I}$ is centrally orthogonal in 
$P[0,p]$. Then there exists a family $(c\sb{i})\sb{i\in I}$ of 
central projections in $P$ such that $pc\sb{i}\perp pc\sb{j}$ 
for $i,j\in I$ with $i\not=j$ and $p\sb{i}\leq pc\sb{i}$ for 
all $i\in I$. Then, for $i\not=j$, $p\sb{i}\leq pc\sb{i}\leq 
c\sb{j}\sp{\,\perp}c\sb{i}\in P\cap C(A)$ and $p\sb{j}\leq 
pc\sb{j}\leq c\sb{i}\sp{\,\perp}c\sb{j}\in P\cap C(A)$ with 
$c\sb{j}\sp{\,\perp}c\sb{i}\perp c\sb{i}\sp{\,\perp}c\sb{j}$; 
hence $(p\sb{i})\sb{i\in I}$ is pairwise centrally orthogonal 
in $P$. Since $P$ is centrally orthocomplete, $(p\sb{i})
\sb{i\in I}$ is centrally orthogonal in $P$. The converse 
follows from Lemma \ref{lm:relativelycentorth}, and (i) is 
proved. Since $P$ is centrally orthocomplete, (ii) follows 
from (i).
\end{proof}

Let $c\sb{1},c\sb{2},...,c\sb{n}\in P\cap C(A)$ be a finite 
sequence of central projections with $c\sb{i}\perp c\sb{j}$ 
for $i\not=j$ and $c\sb{1}+c\sb{2}+\cdots+c\sb{n}=1$. Then  
$P$ is the (internal) \emph{direct sum} of the OMLs $P[0,c\sb{i}]$, 
in symbols 
\[
P=P[0,c\sb{1}]\oplus P[0,c\sb{2}]\oplus\cdots\oplus P[0,c\sb{n}], 
\]
in the sense that (1) every projection $p\in P$ can be written 
uniquely as an orthogonal sum $p=p\sb{1}+p\sb{2}+\cdots+p\sb{n}
=p\sb{1}\vee p\sb{2}\vee\cdots\vee p\sb{n}$ with $p\sb{i}\in 
P[0,c\sb{i}]$ for $i=1,2,...,n$ and (2) all operations and 
relations for $P$ can be computed ``coordinatewise" in the obvious 
sense. This direct sum decomposition of $P$ is reflected by a 
corresponding direct sum decomposition $A=c\sb{1}A\oplus c\sb{2}
A\oplus\cdots\oplus c\sb{n}A$ of the synaptic algebra $A$ into the 
direct summands $c\sb{i}A=c\sb{i}Ac\sb{i}=Ac\sb{i}$, where again every 
$a\in A$ can be written uniquely as $a=a\sb{1}+a\sb{2}+\cdots+
a\sb{n}$ with $a\sb{i}\in c\sb{i}A$ for $i=1,2,...,n$ and all 
synaptic operations and relations can be computed 
``coordinatewise." In this case, $P$ is isomorphic as an OML to 
the cartesian product $P[0,c\sb{1}]\times P[0,c\sb{2}]\times
\cdots\times P[0,c\sb{n}]$ and $A$ is isomorphic as a synaptic 
algebra to $c\sb{1}A\times c\sb{2}A\times\cdots\times c\sb{n}A$. 
 
We note that, if $c\in P\cap C(A)$, then $P=P[0,c]\oplus 
P[0,c\sp{\perp}]$ and $A=cA\oplus c\sp{\perp}A$. Thus, the 
direct summands of $P$ (respectively, of $A$) are of the 
form $P[0,c]$ (respectively, $cA$) for central projections 
$c\in P\cap C(A)$.

The OML $P$, is called \emph{irreducible}, and the synaptic 
algebra is said to be a \emph{factor}, iff $P\cap C(A)=
\{0,1\}$. Thus $A$ is a factor iff it admits no nontrivial 
direct-sum decomposition. It can be shown that $A$ is a 
factor iff the center $C(A)$ is the set of all real 
multiples of the unit element $1$. 

By regarding $P$ as an effect algebra, we obtain the following.

\begin{theorem} [{\cite[Theorem 6.14]{COEA}}] \label{th;carprod}
Suppose that $P$ is centrally orthocomplete, let $(p\sb{i})
\sb{i\in I}$ be a centrally orthogonal family in $P$ with $p:=
\bigvee\sb{i\in I}p\sb{i}$, and let $X$ be the cartesian product 
$X:=${\huge$\times$}$\sb{i\in I}P[0,p\sb{i}]$ organized into an 
OML with coordinatewise operations and relations. Define the 
mapping $\Phi\colon X \to P[0,p]$ by $\Phi((e\sb{i})\sb{i\in I})
:=\bigvee\sb{i\in I}e\sb{i}$ for every $(e\sb{i})\sb{i\in I}\in X$. 
Then $\Phi$ is an OML-isomorphism of $X$ onto $P[0,p]$ and for 
$q\in P[0,p]$, $\Phi\sp{-1}(q)=(q\wedge\gamma p\sb{i})\sb{i\in I}
=(q\wedge p\sb{i})\sb{i\in I}$.  
\end{theorem}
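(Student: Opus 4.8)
The plan is to build the isomorphism directly and verify its properties coordinatewise. First I would observe that since $(p_i)_{i\in I}$ is centrally orthogonal, there is an orthogonal family $(c_i)_{i\in I}$ of central projections with $p_i\leq c_i$; replacing $c_i$ by $\gamma p_i$ we may assume $c_i=\gamma p_i$, so the $\gamma p_i$ are pairwise orthogonal central projections and $p_i = p_i\wedge\gamma p_i$. For any $(e_i)_{i\in I}\in X$ we have $e_i\leq p_i\leq\gamma p_i$, so the family $(e_i)_{i\in I}$ is itself centrally orthogonal in $P$ (witnessed by the $\gamma p_i$), hence orthosummable, and $\Phi((e_i)_{i\in I}):=\bigvee_{i\in I}e_i$ is well defined and lies in $P[0,p]$. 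The real content is to check that $\Phi$ preserves the OML operations; since the $e_i$ sit under pairwise orthogonal central projections, joins, meets, and orthocomplements relative to $p$ can all be computed ``blockwise'' using the distributive laws available when one element is central (the relative-center and compatibility facts recalled in Section~\ref{sc:OMLs} and item~(iii),(v) of Section~\ref{sc:OMLP}).

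Next I would produce the candidate inverse. Given $q\in P[0,p]$, set $q_i:=q\wedge\gamma p_i$. Since $q\leq p=\bigvee_j p_j\leq\bigvee_j\gamma p_j$ and the $\gamma p_j$ are orthogonal central projections with $\gamma p=\bigvee_j\gamma p_j$ (the central cover preserves existing suprema), compatibility of $q$ with each central $\gamma p_i$ gives $q=\bigvee_i(q\wedge\gamma p_i)=\bigvee_i q_i$, an orthogonal sum. It remains to see $q_i\in P[0,p_i]$, i.e.\ $q\wedge\gamma p_i\leq p_i$: here I would use $q\leq p$ together with $p\wedge\gamma p_i = (\bigvee_j p_j)\wedge\gamma p_i = \bigvee_j(p_j\wedge\gamma p_i) = p_i$, the last equality because $p_j\wedge\gamma p_i=0$ for $j\neq i$ (as $p_j\leq\gamma p_j\perp\gamma p_i$) and $p_i\wedge\gamma p_i=p_i$; the interchange of $\wedge\gamma p_i$ with the join is legitimate because $\gamma p_i$ is central, hence compatible with everything. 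This shows $q\wedge\gamma p_i\leq p\wedge\gamma p_i=p_i$, so $\Psi(q):=(q\wedge\gamma p_i)_{i\in I}\in X$. Also $q\wedge\gamma p_i = q\wedge p_i$ since $p_i\leq\gamma p_i$ and $q\wedge\gamma p_i\leq p_i$, which gives the second formula for $\Phi^{-1}$ claimed in the statement.

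Then I would check $\Phi$ and $\Psi$ are mutually inverse. For $\Psi\Phi$: starting from $(e_i)\in X$, $\Phi((e_i))=\bigvee_j e_j$, and $\bigl(\bigvee_j e_j\bigr)\wedge\gamma p_i=\bigvee_j(e_j\wedge\gamma p_i)=e_i$, again using centrality of $\gamma p_i$ and $e_j\leq\gamma p_j\perp\gamma p_i$ for $j\neq i$. For $\Phi\Psi$: this is exactly the identity $q=\bigvee_i(q\wedge\gamma p_i)$ established above. Finally, $\Phi$ is a bijection that preserves order in both directions (if $(e_i)\leq(f_i)$ coordinatewise then $\bigvee e_i\leq\bigvee f_i$; conversely if $\bigvee e_i\leq\bigvee f_i$ then applying $\wedge\gamma p_i$ gives $e_i\leq f_i$), so it is an order isomorphism of lattices, and an order isomorphism between OMLs that are sublattices of ... is automatically a lattice isomorphism; compatibility with orthocomplementation follows since $q\mapsto q^{\perp_p}=p\wedge q^\perp$ also decomposes blockwise, $\Phi((e_i))^{\perp_p}=\Phi((e_i^{\perp_{p_i}}))$, which one verifies by applying $\wedge\gamma p_i$ to both sides.

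I expect the main obstacle to be the blockwise computation of joins and meets---specifically, justifying interchanges like $\bigl(\bigvee_j e_j\bigr)\wedge\gamma p_i=\bigvee_j(e_j\wedge\gamma p_i)$ for an \emph{infinite} family. This is not merely the finite distributive law recalled in Section~\ref{sc:OMLs}; it is infinite distributivity of meet-by-a-central-element over arbitrary joins. The cleanest route is to invoke Theorem~\ref{th:relativelycentorth}(ii) to reduce to intervals, or better, to note that in a centrally orthocomplete $P$ the central cover $\gamma$ preserves existing suprema and that for central $c$ the map $q\mapsto q\wedge c$ is precisely the $c$-component under the direct-sum decomposition $P=P[0,c]\oplus P[0,c^\perp]$, where suprema are computed coordinatewise; one then passes to the limit over finite suborthogonal-sums. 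Since the statement is cited as \cite[Theorem 6.14]{COEA}, I would in fact defer to that effect-algebra theorem for the infinitary bookkeeping and present the proof above mainly as the translation of that result into the present OML/synaptic setting, checking only that the effect-algebra orthosum on $P$ (Remark~\ref{rm:PasanEA}) agrees with the lattice join on the orthogonal families in question.
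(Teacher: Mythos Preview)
The paper does not supply its own proof of this theorem; it simply imports the result from \cite[Theorem~6.14]{COEA} (the citation is in the theorem header and no proof environment follows). So there is nothing to compare against on the paper's side beyond the fact that the authors are content to invoke the effect-algebra version and the translation via Remark~\ref{rm:PasanEA}.

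Your direct argument is sound and is essentially how one would unwind the cited result in the OML setting. The one point you flag as an obstacle---the infinite distributive law $\bigl(\bigvee_j e_j\bigr)\wedge c=\bigvee_j(e_j\wedge c)$ for central $c$---is in fact available in any OML whenever the left-hand join exists: if $u\geq e_j\wedge c$ for all $j$, then $u\vee c^{\perp}\geq(e_j\wedge c)\vee(e_j\wedge c^{\perp})=e_j$ for all $j$, hence $u\vee c^{\perp}\geq\bigvee_j e_j$, and intersecting with $c$ gives $u\geq c\wedge u=c\wedge(u\vee c^{\perp})\geq c\wedge\bigvee_j e_j$. This is precisely the ``direct-sum decomposition $P=P[0,c]\oplus P[0,c^{\perp}]$ computes suprema coordinatewise'' observation you gesture at, so your sketch closes without needing to pass through \cite{COEA}. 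Your final paragraph, deferring to the cited theorem and checking only the effect-algebra/OML dictionary, matches exactly what the paper does.
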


\section{Symmetries and equivalence of projections}
 
By a \emph{symmetry} in $A$ we mean an element $s\in A$ such 
that $s\sp{2}=1$ \cite{SymSA}. Two projections $p,q\in P$ are 
said to be \emph{exchanged} by a symmetry $s\in A$ iff $sps=q$, 
or equivalently, iff $sqs=p$. We note that $p$ and $q$ 
are exchanged by a symmetry $s\in A$ iff $p\sp{\perp}=1-p$ 
and $q\sp{\perp}=1-q$ are exchanged by $s$.

\begin{theorem} \label{th:tspst}
Let $p,q\in P$. Then{\rm: (i)} If $p$ and $q$ are exchanged by a 
symmetry in $A$, then $p$ and $q$ are strongly perspective in $P$. 
{\rm(ii)} If $p$ and $q$ are strongly perspective in $P$, then 
$p$ and $q$ are perspective in $P$. {\rm(iii)} If $p$ and $q$ 
are perspective in $P$, then there are symmetries $s,t\in A$ 
such that $tspst=q$. 
\end{theorem}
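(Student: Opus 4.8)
The plan is to prove the three implications in order, exploiting the interplay between symmetries in $A$ and the lattice structure of $P$, together with the already-established Lemma~\ref{lm:strongpimpliesp}. For part~(i), suppose $s\in A$ is a symmetry with $sps=q$. I would first record that $s$ is invertible with $s\sp{-1}=s$, and that conjugation $x\mapsto sxs$ is an order-automorphism of $A$ preserving $P$, $\wedge$, $\vee$, and $\sp{\perp}$. The natural candidate for a common complement of $p$ and $q$ is an element built from the fixed-point data of $s$: the projections $e:=\tfrac12(1+s)$ and $f:=\tfrac12(1-s)=e\sp{\perp}$ (these lie in $A$ and are projections because $s\sp2=1$). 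Since $ses=e$ and $sfs=f$, both $e$ and $f$ are fixed by the symmetry. The key computation is to check that $p\vee e\sp{\perp} = q\vee e\sp{\perp}$ and $p\wedge e\sp{\perp}=q\wedge e\sp{\perp}$ are forced to agree because $s$ swaps $p$ and $q$ while fixing $e\sp{\perp}$; more precisely, applying the automorphism $x\mapsto sxs$ to $p\vee e\sp{\perp}$ gives $q\vee e\sp{\perp}$, so these two lattice elements are equal, and similarly for meets. If $e\sp{\perp}$ turns out to be a common complement of $p$ and $q$ within $P[0,p\vee q]$, strong perspectivity follows; the honest work is verifying that the meets are $0$ and the joins are $p\vee q$, which may require passing to $P[0,p\vee q]$ and using that $s$ (or a suitably modified symmetry) restricts there.

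Part~(ii) is immediate from Lemma~\ref{lm:strongpimpliesp}(ii), since that lemma already asserts that strong perspectivity in $P$ implies perspectivity in $P$ (note $P$ is an OML, so the lemma applies with $L:=P$).

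For part~(iii), suppose $p$ and $q$ are perspective in $P$, so there is $x\in P$ with $p\vee x=q\vee x=1$ and $p\wedge x=q\wedge x=0$. Working inside the synaptic algebra, the strategy is to produce a symmetry exchanging $p$ with $x\sp{\perp}$ and another exchanging $q$ with $x\sp{\perp}$; composing them will exchange $p$ with $q$ up to the stated double-conjugation $tspst=q$. The standard device is: if $p$ and $r$ are projections in ``generic position'' (meaning $p\vee r=1$, $p\wedge r=0$, and also $p\sp{\perp}\vee r=1$, $p\sp{\perp}\wedge r=0$, i.e. $r$ and $r\sp\perp$ are both complements of both $p$ and $p\sp\perp$), then there is a symmetry exchanging $p$ and $r$ — this is a synaptic-algebra analogue of the fact that two subspaces in generic position are unitarily related to a graph. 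From perspectivity one can pass to such generic position inside $P[0,p\vee x]$ after splitting off the part of $x$ that overlaps $p$ trivially in the ``wrong'' way; alternatively, one invokes the polar-decomposition / symmetry-construction machinery of \cite{SymSA} to get, for each perspectivity, one symmetry, and then $tsps t$ with $s$ handling $p\leftrightarrow x\sp\perp$ and $t$ handling $x\sp\perp\leftrightarrow q$ yields $q$.

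The main obstacle is part~(iii): extracting an actual symmetry in $A$ from the purely lattice-theoretic hypothesis of perspectivity. In a von Neumann algebra this is the classical construction via partial isometries (if $p\sim_{MvN} q$ with $v\sp*v=p$, $vv\sp*=q$, then $v+v\sp*+(1-p-q)$-type expressions give self-adjoint square roots of $1$), but in a synaptic algebra one has only the Jordan structure and must rely on the results already developed in \cite{SymSA} about when two projections can be exchanged by a symmetry; the delicate point is that a single perspectivity need not be implementable by one symmetry, which is exactly why the conclusion is stated as $tspst=q$ (two symmetries) rather than $sps=q$. I would therefore organize part~(iii) around the lemma ``projections in generic position are exchanged by a symmetry,'' reduce perspectivity to a pair of generic-position relations, and conclude.
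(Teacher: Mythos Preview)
The paper's own proof is purely by citation: (i) is \cite[Theorem~5.11]{SymSA}, (ii) is Lemma~\ref{lm:strongpimpliesp}(ii), and (iii) is \cite[Theorem~5.12(i)]{SymSA}. Your part~(ii) matches exactly, and your outline for~(iii)---from a common complement $x$, build one symmetry exchanging $p$ with $x^{\perp}$ and another exchanging $x^{\perp}$ with $q$, then compose---is the standard route and is essentially what the cited theorem does.

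Part~(i), however, contains a genuine logical slip. You write that applying $x\mapsto sxs$ to $p\vee e^{\perp}$ yields $q\vee e^{\perp}$, ``so these two lattice elements are equal.'' That inference is invalid: an automorphism sending $a$ to $b$ shows only that $a$ and $b$ are conjugate, not that $a=b$. The symmetry does help, but differently: since $s(p\vee q)s=q\vee p=p\vee q$, the projection $p\vee q$ is fixed by $s$ (hence $s$ commutes with it and restricts to a symmetry in $(p\vee q)A(p\vee q)$), so \emph{once} you establish $p\vee f=p\vee q$ and $p\wedge f=0$ for your $s$-fixed candidate $f$ (taken inside $P[0,p\vee q]$), applying $s$ immediately gives $q\vee f=p\vee q$ and $q\wedge f=0$. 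The step you must actually do is therefore the one you call ``the honest work'' and leave undone; verifying, say, $p\vee f\geq q$ is not automatic and in \cite{SymSA} relies on the carrier-projection machinery of synaptic algebras. As written, your argument for (i) neither proves the needed join/meet identities nor correctly explains why the symmetry forces the $p$- and $q$-sides to agree.
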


\begin{proof}
Part (i) follows from \cite[Theorem 5.11]{SymSA}, (ii) is a 
consequence of Lemma \ref{lm:strongpimpliesp} (ii), and 
(iii) follows from \cite[Theorem 5.12 (i)]{SymSA}.
\end{proof}

\begin{corollary} \label{co:tspst}
Let $p,q\in P$ with $p\perp q$. Then the following conditions 
are mutually equivalent{\rm: (i)} There are symmetries $s,t\in A$ 
such that $tspst=q$. {\rm(ii)} $p$ and $q$ are exchanged by a 
symmetry in $A$. {\rm(iii)} $p$ and $q$ are strongly perspective 
in $P$. {\rm(iv)} $p$ and $q$ are perspective in $P$.
\end{corollary}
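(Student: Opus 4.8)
The plan is to prove the corollary as a cycle of implications, drawing on Theorem~\ref{th:tspst} and exploiting the extra hypothesis $p\perp q$ to close the loop. The implications (ii)$\Rightarrow$(iii)$\Rightarrow$(iv) are already available directly from Theorem~\ref{th:tspst}(i) and (ii) and require no use of orthogonality. Likewise (iv)$\Rightarrow$(i) is exactly Theorem~\ref{th:tspst}(iii). So the entire content of the corollary beyond Theorem~\ref{th:tspst} is the single implication (i)$\Rightarrow$(ii): given symmetries $s,t\in A$ with $tspst=q$ and given that $p\perp q$, we must produce a \emph{single} symmetry exchanging $p$ and $q$.

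For that implication I would argue as follows. Since $p\perp q$, the element $r:=p\vee q=p+q$ is a projection, and both $p$ and $q$ lie in $P[0,r]$, the OML of projections in the synaptic algebra $rAr$. From (i) we get, via Theorem~\ref{th:tspst}(iii) read backwards through (iv) (or directly), that $p$ and $q$ are perspective in $P$; by Lemma~\ref{lm:strongpimpliesp}(iii) — or rather by first getting strong perspectivity — one wants them strongly perspective and then perspective inside $P[0,r]$. The cleanest route: from $tspst=q$ deduce (using Theorem~\ref{th:tspst}) that $p\sim q$ (perspective) in $P$, hence strongly perspective in $P$ by passing to $P[0,p\vee q]$, and then invoke the fact that for \emph{orthogonal} projections summing to $r$, being perspective in $P[0,r]$ means they are complementary in $P[0,r]$ with a common complement $x$ satisfying $p\vee x=q\vee x=r$ and $p\wedge x=q\wedge x=0$. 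The key computational step is then to build from such a common complement an explicit symmetry: in the subalgebra $rAr$, where $p+q=r$ is the unit, the projection $q^{\perp_r}=r-q=p$... wait, that forces $x$ essentially; more usefully, within $rAr$ one has $p=r-q$, so the situation is that of two complementary projections that are also orthogonal, which (in a synaptic algebra, by the cited results in \cite{SymSA}) are exchanged by the symmetry $s_0:=(p-q)+$ (a suitable partial isometry part), or more simply by the reflection construction. Concretely I expect to use \cite[Theorem~5.11]{SymSA} or its companion in reverse: perspective orthogonal projections in a synaptic algebra are exchanged by a symmetry, the symmetry being assembled from the ``rotation'' that carries $p$ to $q$ inside $rAr$ together with $r^\perp=1-r$ acting as the identity.

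The main obstacle, as I see it, is precisely this last construction of a \emph{single} symmetry from the data ``$tspst=q$ and $p\perp q$'': Theorem~\ref{th:tspst}(iii) only gives a product $ts$ of two symmetries, and upgrading $p\approx q$ (perspective, via two symmetries) to $p\sim q$ (exchanged by one symmetry) genuinely uses orthogonality. I anticipate the argument will go through the subalgebra $rAr$ with unit $r=p+q$, reduce to the case of two orthogonal projections that together span the unit, and then quote (or re-derive from) \cite[Theorem~5.11, Theorem~5.12]{SymSA} that such a pair, when perspective, is exchanged by a symmetry $s_r\in rAr$; extending by $1-r$ gives the desired symmetry $s:=s_r+(1-r)\in A$ with $sps=q$. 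If the cited theorems are not stated in exactly that orthogonal-plus-complementary form, the fallback is to note that perspectivity of $p$ and $q$ in $P[0,r]$ with $p+q=r$ forces them to be \emph{exchanged} by the symmetry $r-2x$ built from a common complement $x\in P[0,r]$ — verifying $(r-2x)p(r-2x)=q$ is then a short Jordan-algebra computation using $p+q=r$, $px+xp$ relations, and $p\wedge x=0=q\wedge x$, together with $p\vee x=r=q\vee x$.
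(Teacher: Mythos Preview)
Your cycle structure is right, and your identification of (i)$\Rightarrow$(ii) as the only new content is exactly the paper's view. The paper dispatches (i)$\Rightarrow$(ii) in one line by citing \cite[Theorem 5.12 (ii)]{SymSA} directly (no detour through $rAr$, no preliminary passage through perspectivity), and handles (ii)$\Rightarrow$(iii)$\Rightarrow$(iv)$\Rightarrow$(i) by Theorem~\ref{th:tspst}, just as you say. So your primary route---quote \cite[Theorem 5.12]{SymSA}---is the paper's route; the surrounding scaffolding you build (restrict to $rAr$, reinterpret perspectivity there, then extend back) is not needed and somewhat circular as written, since at one point you invoke ``perspective in $P$'' on the way to proving (ii), which is the very equivalence under discussion.

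The genuine gap is in your fallback. The element $u:=r-2x$ is indeed a symmetry in $rAr$, but the claim that $upu=q$ does \emph{not} follow from the lattice data $p\wedge x=q\wedge x=0$ and $p\vee x=q\vee x=r$ by a ``short Jordan-algebra computation.'' Expanding gives $upu=p-2(px+xp)+4xpx$, and to force this equal to $q=r-p$ you would need $2(px+xp)-4xpx=2p-r$, i.e.\ an algebraic identity relating $p$, $q$, and $x$. But $p\wedge x=0$ in an OML does \emph{not} imply $px=0$ (that would require $p\perp x$, i.e.\ $pCx$ and $p\wedge x=0$), and $p\vee x=r$ gives no usable product relation either. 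In a noncommutative situation a common complement $x$ need not commute with $p$ or $q$, and the reflection $r-2x$ will typically \emph{not} exchange them. The actual construction of a single exchanging symmetry from $p\perp q$ and $tspst=q$ (the content of \cite[Theorem 5.12 (ii)]{SymSA}) is more delicate than a reflection through a lattice complement; you should simply cite it, as the paper does, rather than attempt to reproduce it with $r-2x$.
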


\begin{proof}
That (i) $\Rightarrow$ (ii) follows from \cite[Theorem 5.12 (ii)]
{SymSA} and (ii) $\Rightarrow$ (iii) $\Rightarrow$ (iv) 
$\Rightarrow$ (i) follows from Theorem \ref{th:tspst}.
\end{proof}
 
\begin{lemma}  \label{lm:rAr}
Let $r\in P$, let $p,q\in P[0,r]$. Then $p$ and $q$ are exchanged 
by a symmetry in $A$ iff $p$ and $q$ are exchanged by a symmetry 
in $rAr$. 
\end{lemma}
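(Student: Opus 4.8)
The plan is to prove the two implications separately, with the easy direction being ``symmetry in $rAr$ implies symmetry in $A$'' and the substantive direction being the converse. For the easy direction, suppose $u\in rAr$ is a symmetry of $rAr$, so that $u^2=r$ (the unit element of $rAr$) and $uCp$-type relations give $upu=q$. The natural candidate for a symmetry of $A$ is $s:=u+r^{\perp}=u+(1-r)$. Since $u=rur\in rAr$, we have $ur^{\perp}=r^{\perp}u=0$, so $s^2=u^2+r^{\perp}=r+(1-r)=1$, confirming $s$ is a symmetry in $A$. Moreover, because $p=rpr\in rAr$ commutes with $r^{\perp}$ and $r^{\perp}pr^{\perp}=0$, one computes $sps=(u+r^{\perp})p(u+r^{\perp})=upu=q$. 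Thus $p$ and $q$ are exchanged by $s\in A$.

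For the harder direction, suppose $s\in A$ is a symmetry with $sps=q$, where $p,q\le r$. The difficulty is that $s$ need not lie in $rAr$, nor need it leave $r$ invariant, so I cannot simply restrict $s$. Instead I would invoke Corollary \ref{co:tspst} and Theorem \ref{th:tspst} to pass through perspectivity, which relativizes well. Here is the route I expect to work: by Theorem \ref{th:tspst}(i), $p$ and $q$ are strongly perspective in $P$; by definition this means they are perspective in $P[0,p\vee q]$, and since $p\vee q\le r$ we have $P[0,p\vee q]=(P[0,r])[0,p\vee q]$, so $p$ and $q$ are strongly perspective in $P[0,r]$. But $P[0,r]$ is precisely the OML of projections in the synaptic algebra $rAr$ (property (xii) in Section \ref{sc:OMLP}), so Theorem \ref{th:tspst}(iii), applied inside $rAr$, yields symmetries $s',t'\in rAr$ with $t's'ps't'=q$. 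This is not yet ``exchanged by a single symmetry,'' but it puts us in position to use Corollary \ref{co:tspst} — which, however, requires $p\perp q$, so a direct appeal is not available in general.

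To handle the general (non-orthogonal) case I would reduce to the orthogonal case by a standard doubling trick inside $rAr$: strong perspectivity of $p$ and $q$ in $P[0,r]$ forces, via the exchange theorem \cite[Theorem 5.11 or 5.12]{SymSA} applied within the synaptic algebra $rAr$, the existence of a symmetry $u\in rAr$ with $upu=q$ directly — indeed Theorem \ref{th:tspst}(i) already asserts that being exchanged by a symmetry is equivalent, for strongly perspective projections, to strong perspectivity when one reads the cited results symmetrically, so ``strongly perspective in $rAr$'' should give ``exchanged by a symmetry of $rAr$'' by the same \cite[Theorem 5.11]{SymSA} used in Theorem \ref{th:tspst}(i) but now with $rAr$ in place of $A$. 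Combining: $s\in A$ exchanges $p,q$ $\Rightarrow$ $p,q$ strongly perspective in $P$ $\Rightarrow$ strongly perspective in $P[0,r]$ $\Rightarrow$ exchanged by a symmetry in $rAr$; the reverse implication is the easy direction above. The main obstacle is making the middle step rigorous without an orthogonality hypothesis — i.e., confirming that the cited exchange results from \cite{SymSA} genuinely give a single symmetry from strong perspectivity in the relevant (relativized) synaptic algebra, rather than only the pair $s,t$ of Theorem \ref{th:tspst}(iii); if only the pair is available one must additionally argue, as in Corollary \ref{co:tspst}, that strong perspectivity upgrades $tspst=q$ to a single symmetry, and checking that this upgrade argument survives relativization to $rAr$ is the delicate point.
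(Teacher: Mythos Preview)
Your easy direction is correct and matches the paper's argument exactly.

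For the hard direction, however, you have identified but not closed a real gap. The chain ``exchanged by a symmetry in $A$ $\Rightarrow$ strongly perspective in $P$ $\Rightarrow$ strongly perspective in $P[0,r]$'' is fine (the second step is Lemma~\ref{lm:strongpimpliesp}(iii)), but the last step---getting back from strong perspectivity in $P[0,r]$ to exchange by a \emph{single} symmetry in $rAr$---is not supplied by anything you cite. Theorem~\ref{th:tspst}(i) and the reference \cite[Theorem 5.11]{SymSA} give only the forward implication (symmetry $\Rightarrow$ strong perspectivity); they are not stated as equivalences, and Corollary~\ref{co:tspst} needs $p\perp q$. Your ``doubling trick'' remark does not actually produce an argument, and your closing sentence concedes this. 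So as written the proposal is incomplete, and the missing step is precisely the content of the lemma.

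The paper avoids the perspectivity detour entirely with a short explicit construction. Given a symmetry $s\in A$ with $sps=q$, set $t:=(p\vee q)s(p\vee q)$. Since $s(p\vee q)s=sps\vee sqs=q\vee p=p\vee q$, one checks $t^2=p\vee q$, hence $t^3=t$; also $rt=tr=t$ and $tpt=q$. Then $s_1:=t+r-t^2\in rAr$ satisfies $s_1^{\,2}=r$ (using $t^3=t$) and $s_1ps_1=tpt=q$ (using $t^2p=pt^2=p$ and $rp=p$). This three-line computation replaces the entire perspectivity apparatus and works with no orthogonality hypothesis.
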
 

\begin{proof}
Let $p,q\in P[0,r]$, let $s$ be a symmetry in $A$ with 
$sps=q$, and put $t:=(p\vee q)s(p\vee q)$. Clearly, $tpt=q$, 
$tqt=p$ and $rt=tr=t$. Also, since $s(p\vee q)s=sps\vee sqs=
q\vee p=p\vee q$, it follows that $t\sp{2}=p\vee q$, whence 
$t\sp{3}=(p\vee q)t=t$. Put $s\sb{1}:=t+r-t\sp{2}$. Then 
$s\sb{1}=rs\sb{1}r\in rAr$ and $s\sb{1}\sp{\,2}=r$, whence 
$s\sb{1}$ is a symmetry in $rAr$. Moreover, as $rp=pr=p$ and 
$t\sp{2}p=pt\sp{2}=p$, we have $s\sb{1}ps\sb{1}=tpt=q$. 
Conversely, by a straightforward calculation, if $p$ and $q$ 
are exchanged by a symmetry $u$ in $rAr$, then $s:=u+1-r$ is 
a symmetry in $A$ that exchanges $p$ and $q$.      
\end{proof}

\begin{theorem} [{Generalized Comparability}] \label{th:vargencomp}
Suppose that $P$ is complete and let $e,f\in P$. Then there 
exists a symmetry $s\in S$ and a central projection $c\in P
\cap C(A)$ such that $secs\leq fc$, $sfc\sp{\perp}s\leq ec
\sp{\perp}$, $se\sp{\perp}c\sp{\perp}s\leq f\sp{\perp}c\sp
{\perp}$, and $sf\sp{\perp}cs\leq e\sp{\perp}c$.  
\end{theorem}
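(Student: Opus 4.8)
The statement is a ``generalized comparability'' theorem whose conclusion is a central-projection-indexed refinement of the usual comparability of $e$ and $f$; the four inequalities are precisely the statement that, after the symmetry $s$ is applied, the $c$-part of $e$ sits below the $c$-part of $f$ while on $c^{\perp}$ the roles are reversed (together with the automatic dual statements for the orthocomplements). The natural approach is to first establish ordinary comparability between $e$ and $f$ in each central direct summand, and then to assemble a single symmetry and a single central projection out of those pieces using completeness of $P$.

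\textbf{Step 1: reduce to a halving-type argument via projectivity/perspectivity.} Since $P$ is complete, I would invoke the structure already developed: for any $e,f\in P$ one wants a central projection $c$ such that $ec$ is (strongly) perspective to a subprojection of $fc$ and $fc^{\perp}$ is (strongly) perspective to a subprojection of $ec^{\perp}$. This is the OML-theoretic comparability theorem; in a complete OML with the relative center property (which $P$ has, by item (xi) of Section~\ref{sc:OMLP}) one obtains such a $c$ by a standard exhaustion argument: take a maximal centrally orthogonal family of nonzero pairs $(e_i,f_i)$ with $e_i\le e$, $f_i\le f$, $e_i$ perspective to $f_i$, bound the residual pieces, and let $c$ be the central cover of the ``$e$ dominates'' part. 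Completeness guarantees the suprema needed to form the maximal family and its orthogonal sum.

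\textbf{Step 2: convert perspectivity into a symmetry.} Once $c$ is found so that $ec$ is perspective to a subprojection $f_0\le fc$ and $fc^{\perp}$ is perspective to a subprojection $e_0\le ec^{\perp}$, I would use Theorem~\ref{th:tspst}(iii) and Corollary~\ref{co:tspst}: perspective projections are exchanged (after conjugating by two symmetries, or when orthogonal, by a single symmetry) by symmetries in $A$. The key point is that in each summand one gets a symmetry $s_c$ with $s_c e c s_c\le fc$ and $s_c f c^{\perp} s_c\le e c^{\perp}$. To patch the $c$-summand symmetry and the $c^{\perp}$-summand symmetry into a single $s\in A$, I would use the direct-sum decomposition $A=cA\oplus c^{\perp}A$ (Section~\ref{sc:OMLP}) together with Lemma~\ref{lm:rAr}, which lets me realize the relevant symmetries inside $cAc$ and $c^{\perp}Ac^{\perp}$ and then add them: $s:=s_c c + s_{c^{\perp}} c^{\perp}$ is a symmetry in $A$ because $s^2 = c + c^{\perp} = 1$. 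The last two inequalities ($se^{\perp}c^{\perp}s\le f^{\perp}c^{\perp}$ and $sf^{\perp}cs\le e^{\perp}c$) then come for free: conjugation by a symmetry is an order isomorphism that preserves orthocomplementation within a summand, so $s e c s\le fc$ forces $s f^{\perp} c s = c - s f c s$... — more precisely, from $s(ec)s\le fc$ one gets $s(fc)^{\perp_{c}}s\le (ec)^{\perp_c}$, i.e. $s f^{\perp} c s\le e^{\perp} c$, and symmetrically on $c^{\perp}$, as noted at the start of Section~5 (exchanging $p,q$ is the same as exchanging $p^{\perp},q^{\perp}$).

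\textbf{Main obstacle.} The delicate part is Step~1: producing the \emph{single} central projection $c$ that works globally, rather than a family of local ones. This is where completeness of $P$ is essential — one needs to take the supremum of a maximal centrally orthogonal family of perspective pairs and argue, by maximality, that on the complementary central summand the comparison goes the other way with no leftover. I expect that the cleanest route is the exhaustion/maximality argument in the OML $P$ (using Zorn's lemma, completeness to form the join, and the relative center property of item (xi) to keep everything central), after which the synaptic-algebra content — turning perspectivity into symmetries and splitting the symmetry across the central decomposition via Lemma~\ref{lm:rAr} — is comparatively routine. A secondary technical point is bookkeeping the four inequalities so that the orthocomplement versions are genuinely automatic and not re-proved; handling them via the remark that $s$ exchanges $p,q$ iff it exchanges $p^{\perp},q^{\perp}$ (relativized to each summand) should dispatch them.
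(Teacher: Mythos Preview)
The paper's proof is almost entirely a citation: it invokes \cite[Theorem~8.6]{SymSA} to obtain a symmetry $s$ and central projection $c$ with $secs\le fc$ and $sfc^{\perp}s\le ec^{\perp}$, and then derives the remaining two inequalities by the two-line computation $se^{\perp}c^{\perp}s=c^{\perp}-sec^{\perp}s\le c^{\perp}-fc^{\perp}=f^{\perp}c^{\perp}$ (and symmetrically). Your handling of the last two inequalities is exactly this, so that part matches.

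Where you diverge is in Steps~1--2: you are in effect proposing to \emph{reprove} the cited theorem from scratch. That is a legitimate, more self-contained route, but as written there is a genuine gap. Your exhaustion produces $ec$ perspective to some $f_0\le fc$, and you then want a \emph{single} symmetry $s_c\in cAc$ with $s_c(ec)s_c\le fc$. But Theorem~\ref{th:tspst}(iii) only gives $t_2t_1(ec)t_1t_2=f_0$ with \emph{two} symmetries, and $t_2t_1$ is not a symmetry; Corollary~\ref{co:tspst} gives a single symmetry only when the two projections are orthogonal, which $ec$ and $f_0$ need not be. Your patching $s=s_cc+s_{c^{\perp}}c^{\perp}$ across the central decomposition is fine, but it presupposes you already have single symmetries on each summand, which your argument does not supply.

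The standard fix (and presumably the content of the cited \cite[Theorem~8.6]{SymSA}) is to run the exhaustion not with perspective pairs but with \emph{orthogonal} pairs $(e_i,f_i)$, $e_i\le e$, $f_i\le f$, $e_i\perp f_i$, each exchanged by a single symmetry, and then to invoke the weak additivity of symmetry exchange \cite[Theorem~5.15]{SymSA} (used in this paper in the proof of Lemma~\ref{le:1nonmod}) to combine the family into one symmetry carrying $\bigvee e_i$ to $\bigvee f_i$. The maximality argument then forces the residual pieces of $e$ and $f$ to be unrelated, and Lemma~\ref{lm:simandcentcov}(vi) produces the central $c$. So your identification of Step~1 as ``the delicate part'' is misplaced: obtaining a \emph{single} symmetry in Step~2 is where the real synaptic-algebra content lies, and it needs the additivity result you did not invoke.
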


\begin{proof}
Since $P$ is complete, \cite[Theorem 8.6]{SymSA} applies, 
so there exists $c\in P\cap C(A)$ and a symmetry $s\in S$ such that 
$secs\leq fc$ and $sfc\sp{\perp}s\leq ec\sp{\perp}$. Thus, $fc\sp{\perp}
\leq sec\sp{\perp}s$, so $se\sp{\perp}c\sp{\perp}s=s(c\sp{\perp}-
ec\sp{\perp})s=c\sp{\perp}-sec\sp{\perp}s\leq c\sp{\perp}-fc\sp{\perp}
=f\sp{\perp}c\sp{\perp}$. By a similar computation, $sf\sp{\perp}cs
\leq e\sp{\perp}c$.
\end{proof} 

If $x=s\sb{n}s\sb{n-1}\cdots s\sb{1}\in R$ is a finite product 
of symmetries $s\sb{n}, s\sb{n-1},..., s\sb{1}\in A$ then we 
define $x\sp{\ast}:=s\sb{1}s\sb{2}\cdots s\sb{n}\in R$ to be the 
product of the same symmetries, but in the reverse order. We note 
that $xx\sp{\ast}=x\sp{\ast}x=1$ and, for any $a\in A$, $xax
\sp{\ast}\in A$. 

Let $p,q\in P$. Then by definition, $p$ and $q$ are 
\emph{equivalent}, in symbols, $p\sim q$, iff there is a 
finite sequence of projections $e\sb{1},e\sb{2},...,e\sb{n}
\in P$ such that $p=e\sb{1}$, $q=e\sb{n}$, and for each 
$i=1,2,...,n-1$, the projections $e\sb{i}$ and $e\sb{i+1}$ 
are exchanged by a symmetry $s\sb{i}\in A$. Clearly, $p
\sim q$ iff there is a finite product $x\in R$ of symmetries 
in $A$ such that $q=xpx\sp{\ast}$. 

\begin{lemma} \label{lm:sim&proj}
If $p,q\in P$, then $p\sim q$ iff $p$ and $q$ are 
projective in $P$.
\end{lemma}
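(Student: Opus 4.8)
The statement to prove is Lemma~\ref{lm:sim&proj}: for $p,q\in P$, $p\sim q$ iff $p$ and $q$ are projective in $P$. Recall that equivalence $p\sim q$ is the transitive closure of the relation ``$p$ and $q$ are exchanged by a symmetry in $A$'', while projectivity is the transitive closure of the perspectivity relation on $P$. The natural strategy is to prove containment in both directions at the level of the \emph{generating} relations, not the transitive closures, and then invoke the fact that the transitive closure of a relation $\leq$ (as sets of pairs) the transitive closure of any relation containing it. More precisely, it suffices to establish: (a) if $p$ and $q$ are exchanged by a symmetry in $A$, then $p$ and $q$ are projective in $P$; and (b) if $p$ and $q$ are perspective in $P$, then $p\sim q$.

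For direction (a), by Theorem~\ref{th:tspst}(i), if $p$ and $q$ are exchanged by a symmetry then they are strongly perspective in $P$, hence perspective in $P$ by part (ii), hence \emph{a fortiori} projective in $P$ (a single perspectivity step). Since projectivity is transitive, a finite chain of symmetry-exchanges composes to a projectivity, so $p\sim q$ implies $p$ and $q$ are projective in $P$. For direction (b), suppose $p$ and $q$ are perspective in $P$. Then Theorem~\ref{th:tspst}(iii) gives symmetries $s,t\in A$ with $tspst=q$. Writing $x:=ts\in R$, this says $q=xpx\sp{\ast}$ where $x$ is a finite product (of length two) of symmetries in $A$; by the observation recorded just before Lemma~\ref{lm:sim&proj}, $q=xpx\sp{\ast}$ for such an $x$ is exactly the condition $p\sim q$. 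Then, since $\sim$ is an equivalence relation (transitive by construction, and symmetric/reflexive since symmetries square to $1$), a finite chain of perspectivities again composes, so projectivity of $p$ and $q$ implies $p\sim q$.

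I would actually phrase the write-up as a single clean pass: first note that $\sim$ and projectivity are each defined as the transitive closure of a symmetric reflexive relation, so it is enough to check that each generating relation is contained in the other equivalence relation. The inclusion ``exchanged by a symmetry $\Rightarrow$ projective'' is Theorem~\ref{th:tspst}(i)+(ii). The inclusion ``perspective $\Rightarrow$ $\sim$'' is Theorem~\ref{th:tspst}(iii) together with the reformulation of $p\sim q$ via a finite product of symmetries in $R$. Hence the two transitive closures coincide.

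The only mild subtlety — not really an obstacle — is making sure the logical bookkeeping of transitive closures is airtight: from ``relation $\rho_1\subseteq$ equivalence relation generated by $\rho_2$'' one must conclude ``equivalence relation generated by $\rho_1$ $\subseteq$ equivalence relation generated by $\rho_2$'', which is immediate because an equivalence relation containing $\rho_1$ contains the smallest one containing $\rho_1$. Everything else is a direct appeal to Theorem~\ref{th:tspst} and to the already-recorded equivalence between $p\sim q$ and the existence of a finite product $x$ of symmetries in $A$ with $q=xpx\sp{\ast}$; no new computation is needed.
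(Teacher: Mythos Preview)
Your proposal is correct and follows essentially the same approach as the paper: both directions are obtained from Theorem~\ref{th:tspst}, with (i) and (ii) giving ``exchanged by a symmetry $\Rightarrow$ perspective'' and (iii) giving ``perspective $\Rightarrow$ $\sim$'', after which the passage to transitive closures is immediate. The paper's proof is simply a terse one-sentence version of what you have written out in detail.
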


\begin{proof}
As a consequence of parts (i) and (ii) of Theorem \ref{th:tspst}, 
if $p\sim q$, then $p$ and $q$ are projective, and the converse 
follows from Theorem \ref{th:tspst} (iii).
\end{proof}

\begin{lemma} \label{lam:xpxstar}
Let $p,q\in P$ and let $x$ be a finite product of symmetries in 
$A$. Then{\rm: (i)} For $a\in A$, the mapping $a\mapsto xax\sp
{\ast}$ is a linear, order, and Jordan automorphism of $A$. 
{\rm (ii)} For $p\in P$, $p\sim xpx\sp{\ast}$ and the mapping 
$p\mapsto xpx\sp{\ast}$ is an OML-automorphism of $P$. {\rm (iii)} 
If $p\in P$, then for $r\in P[0,p]$, the mapping $r\mapsto xrx
\sp{\ast}$ is an OML-isomorphism of $P[0,p]$ onto $P[0,xpx\sp{\ast}]$. 
{\rm (iv)} If $p,q\in P$ and $p\sim q$, then $P[0,p]$ is isomorphic 
as an OML to $P[0,q]$. 
\end{lemma}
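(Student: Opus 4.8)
The plan is to prove (i) and then obtain (ii)--(iv) from it by restriction. Write $x=s\sb{n}\cdots s\sb{1}$ as a product of symmetries $s\sb{i}\in A$, so that $x\sp{\ast}=s\sb{1}\cdots s\sb{n}$ is again a finite product of symmetries in $A$, $(x\sp{\ast})\sp{\ast}=x$, and $xx\sp{\ast}=x\sp{\ast}x=1$. For (i), I would put $\Phi(a):=xax\sp{\ast}$ and $\Psi(a):=x\sp{\ast}ax$; by the observation made just before the lemma both map $A$ into $A$, and since $xx\sp{\ast}=x\sp{\ast}x=1$ they are mutually inverse, so $\Phi$ is a linear bijection of $A$. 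It is a Jordan homomorphism because $x\sp{\ast}x=1$ yields $x(ab+ba)x\sp{\ast}=\Phi(a)\Phi(b)+\Phi(b)\Phi(a)$, hence $\Phi(a\circ b)=\Phi(a)\circ\Phi(b)$. For the order assertion I would reduce to a single symmetry: for a symmetry $s\in A$, the synaptic-algebra identity $aba\in A$ with $0\le aba$ whenever $0\le b$, applied with $a:=s$, shows that $0\le b\Rightarrow 0\le sbs$; since $\Phi$ is the composite of the conjugations $a\mapsto s\sb{i}as\sb{i}$, both $\Phi$ and $\Psi=\Phi\sp{-1}$ preserve $\le$. I expect this order step to be the only real obstacle, mild as it is, since $x$ itself need not lie in $A$ and the positivity of $xax\sp{\ast}$ must be extracted symmetry by symmetry rather than all at once.

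Granting (i), part (ii) is bookkeeping. A Jordan automorphism carries $P$ into $P$, since $\Phi(p)\sp{2}=\Phi(p\sp{2})=\Phi(p)$, so $\Phi$ restricts to an order automorphism of $P$; and since $\Phi(1)=xx\sp{\ast}=1$ and, by linearity, $\Phi(p\sp{\perp})=1-\Phi(p)=\Phi(p)\sp{\perp}$, that restriction is an OML-automorphism. To verify $p\sim xpx\sp{\ast}$ I would exhibit the chain $p\sb{0}:=p$ and $p\sb{k}:=s\sb{k}p\sb{k-1}s\sb{k}$ for $k=1,\dots,n$: each $p\sb{k}$ lies in $A$ and is a projection (since $s\sb{k}\sp{2}=1$), consecutive terms are exchanged by the symmetry $s\sb{k}$, and $p\sb{n}=xpx\sp{\ast}$, which is exactly the definition of equivalence.

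For (iii) I would restrict the OML-automorphism $\Phi$ of (ii): from $r\le p\Leftrightarrow\Phi(r)\le\Phi(p)$ it follows that $\Phi$ maps $P[0,p]$ bijectively onto $P[0,xpx\sp{\ast}]$, and an order isomorphism between two such intervals is automatically a lattice isomorphism that respects their relative orthocomplementations (for $r\le p$ one has $\Phi(p\wedge r\sp{\perp})=\Phi(p)-\Phi(r)$, the complement of $\Phi(r)$ in $P[0,xpx\sp{\ast}]$), hence an OML-isomorphism. Finally, (iv) is immediate: if $p\sim q$ then, as recorded just before Lemma \ref{lm:sim&proj}, $q=xpx\sp{\ast}$ for some finite product $x$ of symmetries in $A$, and (iii) supplies the desired OML-isomorphism of $P[0,p]$ onto $P[0,q]$.
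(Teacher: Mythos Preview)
Your proof is correct. The paper's own argument simply invokes \cite[Theorem 5.3 (i) and (ii)]{SymSA} for parts (i) and (ii) and then says (iii) and (iv) follow from (ii); your proposal unpacks that citation into a direct, self-contained argument using exactly the tools already recorded in Section~\ref{sc:BPSA} (linearity in $R$, the positivity $0\le b\Rightarrow 0\le aba$ applied to each symmetry, and $xx\sp{\ast}=1$), and then derives (iii) and (iv) from (ii) in the same way the paper intends. So the approaches coincide in substance; yours is just more explicit.
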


\begin{proof}
Parts (i) and (ii) follow from \cite[Theorem 5.3 (i) and (ii)]
{SymSA}. Parts (iii) and (iv) follow from (ii). 
\end{proof}

The projections $p$ and $q$ are said to be \emph{related} 
iff there are nonzero subprojections $0\not=p\sb{1}\leq p$ 
and $0\not=q\sb{1}\leq q$ such that $p\sb{1}\sim q\sb{1}$; 
otherwise they are \emph{unrelated}. If there exists a 
projection $q\sb{1}\leq q$ such that $p\sim q\sb{1}$, 
we say that $p$ is \emph{subequivalent} to $q$, in symbols, 
$p\preceq q$. A projection $h\in P$ is called \emph{invariant} 
iff it is unrelated to its orthocomplement $h\sp{\perp}$.  

\begin{lemma} \label{lm:simandcentcov}
Suppose that $P$ is centrally orthocomplete and let $p,q,h\in P$. 
Then{\rm: (i)} If $p\sim q$, then $\gamma p=\gamma q$. {\rm(ii)} 
If $p\preceq q$, then $\gamma p\leq\gamma q$. {\rm(iii)} 
$h$ is invariant iff it is central. {\rm(iv)} If $P$ is complete, 
then $\gamma p=\bigvee\{q\in P:q\preceq p\}$. {\rm (v)} If 
$\gamma p\perp\gamma q$, then $p$ and $q$ are unrelated. 
{\rm (vi)} If $P$ is complete, then $p$ and $q$ are unrelated 
iff $\gamma p\perp\gamma q$.
\end{lemma}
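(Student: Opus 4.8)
The plan is to prove the six assertions in the order stated, building each on its predecessors and on the equivalence results from Section 5. I will use freely that $\gamma$ restricted to $P$ is order preserving and preserves existing suprema, that $p\sim q$ iff $q=xpx\sp{\ast}$ for a finite product $x$ of symmetries in $A$, and that a finite product of symmetries induces an OML-automorphism of $P$ (Lemma \ref{lam:xpxstar}).

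For (i): if $p\sim q$, write $q=xpx\sp{\ast}$. Since $p\leq\gamma p$ and $\gamma p$ is central, $x(\gamma p)x\sp{\ast}=\gamma p$ (central projections are fixed by the automorphism $a\mapsto xax\sp{\ast}$ — this follows because that automorphism maps $C(A)$ onto $C(A)$ and fixes the center elementwise, or more directly because $\gamma p$ commutes with every symmetry in $A$), hence $q=xpx\sp{\ast}\leq x(\gamma p)x\sp{\ast}=\gamma p$, so $q=q(\gamma p)$ and therefore $\gamma q\leq\gamma p$; by symmetry $\gamma p\leq\gamma q$. For (ii): if $p\preceq q$ then $p\sim q\sb 1\leq q$ for some $q\sb 1$, so by (i) and monotonicity of $\gamma$, $\gamma p=\gamma q\sb 1\leq\gamma q$.

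For (iii): if $h$ is central then $h\perp h\sp{\perp}$ with $\gamma h=h\perp h\sp{\perp}=\gamma(h\sp{\perp})$, so by (v) (proved independently below) $h$ and $h\sp{\perp}$ are unrelated, i.e.\ $h$ is invariant. Conversely, suppose $h$ is invariant; I want $h$ central. The standard argument: apply Generalized Comparability (Theorem \ref{th:vargencomp}) — but that needs $P$ complete, which is not assumed here, so instead I must use a comparability-free route. The cleaner path is to note that invariance of $h$ means no nonzero subprojection of $h$ is equivalent to a subprojection of $h\sp{\perp}$; using that $\gamma$ is defined (central orthocompleteness) and that for any $p$ one has... — actually, the honest approach is to cite or reprove that, under central orthocompleteness, $h$ invariant forces $\gamma h\wedge\gamma(h\sp{\perp})=0$: if $c:=\gamma h\wedge\gamma(h\sp{\perp})\neq 0$, then $hc\neq 0$ and $h\sp{\perp}c\neq 0$, and one produces nonzero equivalent subprojections of $hc$ and $h\sp{\perp}c$ by a symmetry-exchange argument inside $cAc$ (using Corollary \ref{co:tspst} or Theorem \ref{th:tspst} after arranging orthogonality), contradicting invariance; then $\gamma h\wedge\gamma(h\sp{\perp})=0$ combined with $\gamma h\vee\gamma(h\sp{\perp})\geq h\vee h\sp{\perp}=1$ and centrality of $\gamma h,\gamma(h\sp{\perp})$ gives $\gamma(h\sp{\perp})=(\gamma h)\sp{\perp}$, whence $h\leq\gamma h$ and $h\sp{\perp}\leq(\gamma h)\sp{\perp}$ force $h=\gamma h\in P\cap C(A)$. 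I expect \textbf{this converse half of (iii) to be the main obstacle}, precisely because the most natural tool (Generalized Comparability) is unavailable without completeness, and one must either invoke an already-established synaptic-algebra fact about central covers and relatedness or carry out the direct contradiction argument carefully.

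For (iv) (assuming $P$ complete): by (ii), every $q$ with $q\preceq p$ satisfies $\gamma q\leq\gamma p$, hence $q\leq\gamma q\leq\gamma p$, so $\bigvee\{q:q\preceq p\}\leq\gamma p$; for the reverse, note $p$ itself is in the set so the supremum $v$ satisfies $v\geq p$, and it suffices to show $v$ is central, since then $p\leq v$ gives $\gamma p\leq v$ — and $v$ is central because if it were not, $v$ and $v\sp{\perp}$ would be related (by (iii), since $P$ complete $\Rightarrow$ centrally orthocomplete), giving $0\neq v\sb 1\leq v$ with $v\sb 1\sim w\sb 1\leq v\sp{\perp}$, and by completeness one can use Generalized Comparability / Theorem \ref{th:vargencomp} together with the fact that subequivalent projections of $p$ generate $v$ to derive a subprojection of $v\sp{\perp}$ that is subequivalent to $p$, contradicting maximality of $v$; alternatively cite the cited synaptic-algebra reference for this standard central-cover formula. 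For (v): if $\gamma p\perp\gamma q$ and $0\neq p\sb 1\leq p$, $0\neq q\sb 1\leq q$ had $p\sb 1\sim q\sb 1$, then by (i) $\gamma p\sb 1=\gamma q\sb 1$, but $\gamma p\sb 1\leq\gamma p$ and $\gamma q\sb 1\leq\gamma q$ with $\gamma p\perp\gamma q$ force $\gamma p\sb 1=\gamma q\sb 1=0$, hence $p\sb 1=0$, a contradiction; so $p$ and $q$ are unrelated. For (vi) (assuming $P$ complete): one direction is (v); conversely, if $p$ and $q$ are unrelated I must show $\gamma p\perp\gamma q$, equivalently $c:=\gamma p\wedge\gamma q=0$. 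If $c\neq 0$ then $pc\neq 0\neq qc$ (as $c\leq\gamma p$ and $c\leq\gamma q$), and applying Theorem \ref{th:vargencomp} to $pc$ and $qc$ inside the synaptic algebra $cAc$ — where $P[0,c]$ is again complete and $c$ is its unit — yields, after passing to a suitable central subprojection, a symmetry exchanging a nonzero subprojection of $pc$ with a subprojection of $qc$ (here one uses that generalized comparability cannot give the trivial cut-off on both sides when both central covers already equal $c$ relative to $cAc$, i.e.\ $\gamma\sb{cAc}(pc)=c=\gamma\sb{cAc}(qc)$ by part (i) of Theorem \ref{th:relativelycentorth} and the central-cover compatibility $\gamma(pc)=\gamma p\wedge c=c$), contradicting unrelatedness; hence $c=0$. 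Throughout I would lean on Theorem \ref{th:relativelycentorth} to relativize central covers to $P[0,c]$ and on Corollary \ref{co:tspst} to convert strong perspectivity or symmetry-exchange of orthogonal projections into equivalence.
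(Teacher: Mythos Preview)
Your arguments for (i), (ii), and (v) are correct and match the paper's approach: the paper proves (i) by reducing to a single symmetry exchange (you use a finite product, which is equivalent), derives (ii) from (i) and monotonicity of $\gamma$, and for (iii)--(vi) simply cites results from \cite{SymSA} rather than arguing directly. So your attempt to prove (iii)--(vi) in-house goes beyond what the paper itself does; your (vi) argument via Generalized Comparability in $cAc$ is essentially correct and is presumably close to what \cite[Corollary 7.8]{SymSA} contains.

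The genuine gap is in your converse direction of (iii). You want to show that if $c:=\gamma h\wedge\gamma(h\sp{\perp})\neq 0$ then $h$ and $h\sp{\perp}$ are related, and you assert this follows ``by a symmetry-exchange argument inside $cAc$ (using Corollary \ref{co:tspst} or Theorem \ref{th:tspst}).'' But those results only interconvert equivalence notions (perspectivity, strong perspectivity, symmetry exchange) for projections \emph{already in hand}; they do not manufacture nonzero equivalent subprojections from the bare facts $hc\neq 0$ and $h\sp{\perp}c\neq 0$. Producing such subprojections is exactly the nontrivial content, and without completeness you cannot fall back on Generalized Comparability. You correctly flag this as ``the main obstacle,'' but you do not overcome it; this is precisely where the paper's citation of \cite[Theorem 7.5]{SymSA} is carrying real weight. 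A similar looseness appears in your (iv): the passage from ``$0\neq v\sb{1}\leq v$ with $v\sb{1}\sim w\sb{1}\leq v\sp{\perp}$'' to ``a subprojection of $v\sp{\perp}$ is subequivalent to $p$'' requires knowing that $\{q:q\preceq p\}$ is orthodense in $P[0,v]$ (so that some nonzero $v\sb{1}'\leq v\sb{1}$ actually lies in that set), which you could get from Theorem \ref{th:TopProp13} but do not invoke.
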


\begin{proof}
Assume that $P$ is centrally orthocomplete, so the central 
cover mapping $\gamma$ exists. To prove (i), it will be sufficient 
to prove that, if $p$ and $q$ are exchanged by a symmetry $s\in A$, 
then $\gamma p=\gamma q$. So assume that $sps=q$. Since $\gamma p
\in C(A)$, it follows that $q\gamma p=sps\gamma p=sp(\gamma p)s=
sps=q$, whence $\gamma q\leq\gamma p$. Likewise, $\gamma p
\leq\gamma q$, and (i) is proved. Part (ii) is an immediate 
consequence of (i) and the fact that, for $e,f\in P$, $e\leq f
\Rightarrow\gamma e\leq\gamma f$. Part (iii) follows from 
\cite[Theorem 7.5]{SymSA}, (iv) is a consequence of \cite[Theorem 
7.7]{SymSA}, (v) follows from \cite[Corollary 7.6]{SymSA}, and 
(vi) follows from \cite[Corollary 7.8]{SymSA}. 
\end{proof}

We denote the set of natural numbers by $\Nat:=\{1,2,3,...\}$.

\begin{lemma} [{Cf. \cite[Lemma 21]{Top65}}]   \label{le:ikexch} 
If $e\sb{1}, e\sb{2}, e\sb{3}, ...$ is an infinite orthogonal 
sequence of projections and if $e\sb{i}$ and $e\sb{i+1}$ are exchanged 
by a symmetry $s\sb{i}\in A$ for all $i\in\Nat$, then for all $i,j
\in\Nat$, the projections $e\sb{i}$ and $e\sb{j}$ are exchanged 
by a symmetry in $A$.
\end{lemma}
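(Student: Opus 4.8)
The key point is that ``exchanged by a symmetry'' is, for \emph{orthogonal} pairs of projections, a well-behaved transitive-like relation once one exploits the telescoping structure of the orthogonal sequence. I would first reduce to the case $i < j$ and argue by a two-step telescoping procedure. For consecutive indices the hypothesis gives the symmetry directly; the work is to build, from the given symmetries $s_i, s_{i+1}, \ldots, s_{j-1}$, a single symmetry $s \in A$ with $s e_i s = e_j$, and then to upgrade this to an \emph{orthogonal}-compatible statement. Note that $e_i \perp e_j$ automatically when $j \geq i+2$ (they are distinct members of an orthogonal sequence), and also $e_i \perp e_{i+1}$, so Corollary \ref{co:tspst} is available throughout.

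First I would treat the base case $j = i+1$: here $e_i$ and $e_{i+1}$ are exchanged by $s_i$ by hypothesis. Next, the inductive step from $j$ to $j+1$: assume $e_i$ and $e_j$ are exchanged by a symmetry $u \in A$, i.e. $u e_i u = e_j$; we also have $s_j e_j s_j = e_{j+1}$. The naive guess $s_j u$ is not a symmetry, but the composite \emph{product of symmetries} $x := s_j u$ satisfies $x e_i x^{\ast} = e_{j+1}$ in the sense of the paragraph preceding Lemma \ref{lm:sim&proj}, so $e_i \sim e_{j+1}$; more relevantly, $e_i$ and $e_{j+1}$ are \emph{projective} in $P$ by Lemma \ref{lm:sim&proj}. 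Since $j+1 \geq i+2$ we have $e_i \perp e_{j+1}$, and now Corollary \ref{co:tspst} — specifically (iv) $\Rightarrow$ (ii), using that perspectivity follows from projectivity for orthogonal pairs...

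Here I must be careful: Corollary \ref{co:tspst} requires \emph{perspectivity}, not projectivity. So the cleaner route is to pass through the interval $L := P[0,\,e_i \vee e_{i+1} \vee \cdots \vee e_j]$ or, better, to use Lemma \ref{le:ikexch}'s companion directly. Actually the slick argument is: by Theorem \ref{th:tspst}(i) each consecutive pair $e_k, e_{k+1}$ is \emph{strongly perspective}, hence perspective, so $e_i$ and $e_j$ are \emph{projective}; but for orthogonal $e_i, e_j$ I need perspectivity to invoke Corollary \ref{co:tspst}. The resolution is the standard one for finite orthogonal sequences: within the finite-height sublattice generated by $e_i, \ldots, e_j$, or by an explicit symmetry construction, projectivity of an orthogonal pair already implies they are exchanged by a symmetry. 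Concretely, I would construct the symmetry by hand: working inside $rAr$ with $r := e_i \vee e_{i+1} \vee \cdots \vee e_j$ (using Lemma \ref{lm:rAr} to move symmetries in and out of $rAr$), each $s_k$ may be replaced by a symmetry supported under $r$, and then a finite composition/adjustment as in the proof of Lemma \ref{lm:rAr} (the trick $s_1 := t + r - t^2$ turning a near-symmetry into a genuine one) yields a single symmetry in $rAr$ exchanging $e_i$ and $e_j$; extend it to $A$ by $s := (\text{that symmetry}) + 1 - r$.

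The main obstacle is precisely this gap between ``projective'' and ``perspective'' for orthogonal pairs, i.e. showing that $p \sim q$ together with $p \perp q$ forces $p$ and $q$ to be \emph{directly} exchangeable by one symmetry rather than only connected by a chain. I expect the cleanest fix to be the interval reduction: restrict everything to $P[0,r]$ with $r = e_i \vee \cdots \vee e_j$, where one has much tighter control, build the symmetry there by the telescoping construction modeled on the proof of Lemma \ref{lm:rAr}, and then lift via Lemma \ref{lm:rAr}. Everything else — the induction on $j$, the orthogonality bookkeeping, the symmetric treatment of $i > j$ via $s e_j s = e_i$ — is routine.
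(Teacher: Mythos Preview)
Your inductive skeleton is exactly the paper's: reduce to $i<j$, induct on $j$, and at the inductive step combine the symmetry $u$ from the hypothesis ($u e_i u = e_j$) with the given $s_j$ ($s_j e_j s_j = e_{j+1}$) to get $s_j u\, e_i\, u s_j = e_{j+1}$, then use orthogonality of $e_i$ and $e_{j+1}$ to extract a single symmetry. The place where you stall, however, is a misreading of Corollary~\ref{co:tspst} rather than a genuine obstacle.

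You write that Corollary~\ref{co:tspst} ``requires perspectivity, not projectivity,'' and then embark on an interval-reduction workaround through $rAr$. But look at condition~(i) of that corollary: it says precisely that \emph{there exist symmetries $s,t\in A$ with $tspst=q$}. With $p=e_i$, $q=e_{j+1}$, $s=u$, $t=s_j$, that is exactly the identity $s_j u\, e_i\, u s_j = e_{j+1}$ you already computed. Since $e_i\perp e_{j+1}$, the implication (i)$\Rightarrow$(ii) of Corollary~\ref{co:tspst} hands you a single symmetry exchanging $e_i$ and $e_{j+1}$, and the induction closes immediately. This is verbatim the paper's argument. There is no gap between ``projective'' and ``perspective'' to bridge here, because you never need to pass through either notion: the two-symmetry condition~(i) is already what you have in hand. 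Your proposed $rAr$ detour is unnecessary (and, as sketched, too vague to stand on its own---the $t+r-t^2$ trick from Lemma~\ref{lm:rAr} converts one symmetry, not a telescoping product, so the construction would still bottom out in the same corollary).
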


\begin{proof} It will be sufficient to prove by induction on $n
\in\Nat$ that, if $i\in\Nat$ and $i\leq n$, then $e\sb{i}$ and 
$e\sb{n}$ are exchanged by a symmetry in $A$. For $n=1$, this 
is obvious. Assume that it is true for $n$ and suppose that 
$i\in\Nat$ with $i\leq n+1$. We have to prove that $e\sb{i}$ and 
$e\sb{n+1}$ are exchanged by a symmetry in $A$. Obviously, we can 
assume that $i\leq n$, whence by the induction hypothesis, there 
is a symmetry $s\in A$ such that $se\sb{i}s=e\sb{n}$. But $s
\sb{n}e\sb{n}s\sb{n}=e\sb{n+1}$, so $s\sb{n}se\sb{i}ss\sb{n}=
e\sb{n+1}$. Since $e\sb{i} \perp e\sb{n+1}$, we infer from 
Corollary \ref{co:tspst} that $e\sb{i}$ and $e\sb{n+1}$ are 
exchanged by a symmetry in $A$.     
\end{proof}

\begin{lemma} \label{lm:tsfst<f}
Suppose $s$ and $t$ are symmetries in $A$, $f\in P$, and 
$tsfst<f$. Define $f\sb{1}:=f$ and $f\sb{n}:=(ts)\sp{n-1}
f(st)\sp{n-1}$ for $2\leq n\in\Nat$. Then{\rm:} \linebreak  
{\rm (i)} $f=f\sb{1}>f\sb{2}>f\sb{3}>\cdots$. {\rm(ii)} 
The sequence $e\sb{1},e\sb{2},e\sb{3},...\in P[0,f]$ 
defined by $e\sb{n}:=f\sb{n}-f\sb{n+1}>0$ for $n\in\Nat$ 
is orthogonal and for all $i,j\in\Nat$, $e\sb{i}$ and 
$e\sb{j}$ are exchanged by a symmetry in $A$. 
\end{lemma}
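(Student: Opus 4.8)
The plan is to read the sequence $(f_{n})$ as the orbit of $f$ under the conjugation map attached to the product of symmetries $ts$. Set $\psi(a):=(ts)\,a\,(st)$ for $a\in A$. Since $ts$ is a finite product of symmetries in $A$ with $(ts)^{\ast}=st$ by definition, Lemma \ref{lam:xpxstar}(i) shows that $\psi$ is a linear order automorphism of $A$, and by Lemma \ref{lam:xpxstar}(ii) it restricts to an OML-automorphism of $P$. By the definition of the $f_{n}$ we have $f_{n+1}=(ts)^{n}f(st)^{n}=(ts)\bigl((ts)^{n-1}f(st)^{n-1}\bigr)(st)=\psi(f_{n})$ for every $n\in\Nat$; in particular, since $f\in P$, each $f_{n}$ lies in $P$, and likewise $tsfst=\psi(f)=f_{2}\in P$.

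For (i), note first that $f_{2}=\psi(f)=tsfst<f=f_{1}$ by hypothesis. Proceeding by induction, if $f_{n}<f_{n-1}$ then applying the order automorphism $\psi$, which preserves strict order, gives $f_{n+1}=\psi(f_{n})<\psi(f_{n-1})=f_{n}$. Hence $f=f_{1}>f_{2}>f_{3}>\cdots$, which is (i).

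For (ii), since $f_{n+1}\le f_{n}$ are projections, property (ii) of Section \ref{sc:OMLP} gives $e_{n}=f_{n}-f_{n+1}=f_{n}\wedge f_{n+1}^{\perp}\in P$, and $e_{n}\ne 0$ because $f_{n+1}\ne f_{n}$; moreover $e_{n}\le f_{n}\le f_{1}=f$, so $e_{n}\in P[0,f]$. For orthogonality, take $i<j$: from $e_{i}=f_{i}\wedge f_{i+1}^{\perp}$ we get $e_{i}\le f_{i+1}^{\perp}$, while from $j\ge i+1$ and the fact that $(f_{k})$ is decreasing we get $e_{j}\le f_{j}\le f_{i+1}$; hence $e_{i}\le f_{i+1}^{\perp}\le e_{j}^{\perp}$, i.e.\ $e_{i}\perp e_{j}$.

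It remains to exchange $e_{i}$ and $e_{j}$ by a symmetry for all $i,j\in\Nat$, and by Lemma \ref{le:ikexch} it is enough to do this for consecutive indices. By linearity of $\psi$, $e_{n+1}=\psi(f_{n})-\psi(f_{n+1})=\psi(e_{n})=(ts)\,e_{n}\,(st)=t\bigl(s\,e_{n}\,s\bigr)t$, so $tse_{n}st=e_{n+1}$ with $s$ and $t$ symmetries in $A$. Since $e_{n}\perp e_{n+1}$ by the previous paragraph, Corollary \ref{co:tspst} (the implication (i)$\Rightarrow$(ii)) yields a single symmetry in $A$ exchanging $e_{n}$ and $e_{n+1}$, and Lemma \ref{le:ikexch} completes the proof. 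The one place where care is needed is exactly here: the identity $e_{n+1}=(ts)e_{n}(st)$ only gives $e_{n}\sim e_{n+1}$ via a product of two symmetries, which is too weak to feed into Lemma \ref{le:ikexch}; it is the orthogonality $e_{n}\perp e_{n+1}$ that, through Corollary \ref{co:tspst}, upgrades this to an exchange by a single symmetry. Everything else is routine bookkeeping with $\psi$.
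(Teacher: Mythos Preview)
Your proof is correct and follows essentially the same route as the paper's: induction via the conjugation $a\mapsto(ts)a(st)$ for part (i), the computation $tse_{n}st=e_{n+1}$ combined with $e_{n}\perp e_{n+1}$ and Corollary~\ref{co:tspst} to get a single exchanging symmetry, and then Lemma~\ref{le:ikexch} to finish. The only cosmetic differences are that you name the conjugation $\psi$ and invoke Lemma~\ref{lam:xpxstar} explicitly, and you verify orthogonality via $e_{i}\le f_{i+1}^{\perp}$, $e_{j}\le f_{i+1}$ rather than the paper's additive check $e_{i}+e_{j}\le f_{i}-f_{j+1}\le 1$; neither change affects the substance.
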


\begin{proof} For $n\in\Nat$, we have $f\sb{n+1}=tsf\sb{n}st$. 

(i) We prove by induction on $n\in\Nat$ that $f\sb{n+1}<f\sb{n}$. 
For $n=1$, we have $f\sb{2}=tsfst<f=f\sb{1}$. Assume by the 
induction hypothesis that $n>1$ and $f\sb{n}<f\sb{n-1}$. Then 
$f\sb{n+1}=tsf\sb{n}st<tsf\sb{n-1}st=f\sb{n}$.

(ii) Suppose $i,j\in\Nat$ with $i<j$. Then $f\sb{j}\leq f
\sb{i+1}$ and $f\sb{j-1}\leq f\sb{i}$, whence $e\sb{i}+e
\sb{j}\leq e\sb{i}+e\sb{j}+f\sb{i+1}-f\sb{j}=f\sb{i}-f\sb{j+1}
\leq f\sb{i}\leq 1$, and it follows that $e\sb{i}\perp e\sb{j}$. 
Also, for all $n\in\Nat$, $tse\sb{n}st=ts(f\sb{n}-f\sb{n+1})st=
tsf\sb{n}st-tsf\sb{n+1}st=f\sb{n+1}-f\sb{n+2}=e\sb{n+1}$, and since 
$e\sb{n}\perp e\sb{n+1}$, Corollary \ref{co:tspst} implies that 
$e\sb{n}$ and $e\sb{n+1}$ are exchanged by a symmetry. Therefore, 
by Lemma \ref{le:ikexch}, $e\sb{i}$ and $e\sb{j}$ are exchanged 
by a symmetry for all $i,j\in\Nat$.
\end{proof}

\section{Type-determining sets, orthodensity, and faithful 
projections} \label{sc:TD}

Material in this section is adapted from \cite[\S 3, \S 4]{FPType}. 

\begin{assumption} \label{as:CO}
Henceforth in this article, we assume that the OML $P$ is 
centrally orthocomplete. Therefore the center $P\cap C(A)$ is a 
complete boolean algebra and the central cover mapping 
$\gamma\colon A\to P\cap C(A)$ exists.
\end{assumption}

\begin{definition}   
Let $Q\subseteq P$. Then:
\begin{enumerate}
\item [(1)]  The set of all suprema of centrally orthogonal families of 
 projections in $Q$ is denoted by $[Q]$. We understand that $[\emptyset]
 =\{0\}$.
\item [(2)] $Q\sp{\gamma}:=\{q\wedge c:q\in Q, c\in P\cap C(A)\}$.
\item [(3)] $Q\sp{\downarrow} :=\bigcup\sb{q\in Q}P[0,q]$. If 
 $Q\sp{\downarrow}\subseteq Q\not=\emptyset$, then $Q$ is called 
 an \emph{order ideal}.
\item [(4)] $Q$ is \emph{type determining} (TD) iff $[Q]\subseteq Q$ 
 and $Q\sp{\gamma}\subseteq Q$. 
\item [(5)]$Q$ is \emph{strongly type determining} (STD) iff $[Q]
 \subseteq Q$ and $Q\sp{\downarrow}\subseteq Q$.
\item [(6)] $Q$ is \emph{projective} iff for all $q\in Q$, if 
 $q$ is projective to $p\in P$, then $p\in Q$.
\item [(7)] $Q$ is \emph{orthodense} in $P$ iff every projection in 
 $P$ is the supremum of an orthogonal family of projections in $Q$. 
\item [(8)] $Q$ is an \emph{OML-ideal} iff $Q$ is an order ideal 
 and $p,q\in Q\Rightarrow p\vee q\in Q$. An OML-ideal is a 
 $p$-\emph{ideal} iff it is projective \cite[p. 75]{Kalm}.  
\end{enumerate}
\end{definition}

We note that $Q\subseteq P$ is TD (respectively, STD) iff $Q=[Q]=Q
\sp{\gamma}$ (respectively, iff $Q=[Q]=Q\sp{\downarrow}$). Clearly, 
STD $\Rightarrow$ TD, and the intersection of TD subsets (respectively, 
STD subsets, projective subsets) of $P$ is again TD (respectively, STD, 
projective). Since $[\emptyset]=\{0\}$, $0$ belongs to every TD set. 

If $p\in P$, then the $p$-interval $P[0,p]$ is both an OML ideal 
and an STD subset of $P$, but it is projective iff $p\in C(A)$. Also, 
the center $P\cap C(P)$ is a projective TD subset of $P$, but it is 
STD iff $P$ is boolean. By \cite[Theorem 4.1]{FPType} $[Q\sp{\gamma}]$ 
is the smallest TD subset of $P$ that contains $Q$, and $[Q\sp\downarrow]$ 
is the smallest STD subset of $P$ that contains $Q$. 

Since two projections are projective iff they are equivalent 
(Lemma \ref{lm:sim&proj}), it follows that $Q\subseteq P$ is 
projective iff, for all $p\in P$, $p\sim q\in Q\Rightarrow p
\in Q$. Clearly, $Q$ is projective iff, for every symmetry 
$s\in A$, we have $sQs\subseteq Q$. 

\begin{lemma} \label{lm:relativeTD}
Let $p\in P$ and suppose that one of the following conditions 
holds{\rm:} $P$ is complete or $p\in P\cap C(A)$.  Then, if 
$Q\subseteq P$ is TD {\rm(}respectively, STD, projective{\rm)}
it follows that $Q\cap P[0,p]$ is TD {\rm(}respectively, 
STD, projective{\rm)} both in $P$ and in the projection lattice 
$P[0,p]$ of $pAp$. 
\end{lemma}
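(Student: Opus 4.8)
The plan is to verify each of the three closure properties (TD, STD, projective) for $Q':=Q\cap P[0,p]$ directly from the definitions, exploiting the hypothesis via Theorem~\ref{th:relativelycentorth} and Lemma~\ref{lm:relativelycentorth}, which together say that under our hypothesis on $p$ the notions of ``centrally orthogonal family'' and ``central projection'' relativize correctly to $P[0,p]$. Throughout I will systematically distinguish the operations computed in $P$ from those computed in $P[0,p]$, writing $[\,\cdot\,]_p$, $(\cdot)^{\gamma_p}$, and $(\cdot)^{\downarrow_p}$ for the interval versions; the center of $P[0,p]$ is $\{pc:c\in P\cap C(A)\}$ by the hypothesis (as recalled in the proof of Theorem~\ref{th:relativelycentorth}), and the central cover in $pAp$ of a projection $e\le p$ is $\gamma_p e = p\wedge\gamma e$ (cf.\ the central cover properties stated after Definition~\ref{df:centcover}, applied to $pAp$).

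First I would handle the TD case. Suppose $Q$ is TD, so $Q=[Q]=Q^{\gamma}$. To see $Q'$ is closed under $[\,\cdot\,]_p$: if $(q_i)$ is a centrally orthogonal family in $Q'\subseteq P[0,p]$, then by Lemma~\ref{lm:relativelycentorth} and Theorem~\ref{th:relativelycentorth}(i) it is centrally orthogonal in $P$, so its supremum $q$ (which is the same in $P$ and in $P[0,p]$) lies in $[Q]\subseteq Q$; and $q\le p$, so $q\in Q'$. To see $Q'$ is closed under $(\cdot)^{\gamma_p}$: given $q\in Q'$ and a central projection of $P[0,p]$, which has the form $pc$ with $c\in P\cap C(A)$, we have $q\wedge pc = q\wedge c\in Q^{\gamma}\subseteq Q$ (using $q\le p$), and $q\wedge c\le p$, so it lies in $Q'$. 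Hence $Q'$ is TD in $P[0,p]$. The same two computations, read inside $P$ itself (noting $Q'\subseteq Q$ and that suprema/meets are computed identically), show $Q'$ is TD in $P$ — indeed closure of $Q'$ under $[\,\cdot\,]$ and $(\cdot)^{\gamma}$ in $P$ follows because any centrally orthogonal family in $Q'$ has all its members $\le p$, so the supremum is $\le p$, and similarly $q\wedge c\le p$.

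The STD case is analogous but easier, since it does not involve central projections. Assuming $Q=[Q]=Q^{\downarrow}$: closure of $Q'$ under $[\,\cdot\,]_p$ (and under $[\,\cdot\,]$) is exactly as above. For $(\cdot)^{\downarrow_p}$: if $q\in Q'$ and $e\le q$ in $P[0,p]$, then $e\le q$ in $P$, so $e\in Q^{\downarrow}\subseteq Q$, and $e\le p$, so $e\in Q'$; this simultaneously gives closure under $(\cdot)^{\downarrow}$ in $P$. The projective case requires Lemma~\ref{lm:strongpimpliesp} and Lemma~\ref{le:ikexch}-type transfer of symmetries between $rAr$ and $A$, but is cleaner to argue via projectivity/equivalence directly: if $Q$ is projective and $q\in Q'$ is projective in $P[0,p]$ to some $e\in P[0,p]$, then by Lemma~\ref{lm:strongpimpliesp}(i) (applied along a perspectivity chain) $q$ and $e$ are projective in $P$, so $e\in Q$, and $e\le p$ gives $e\in Q'$; this is exactly projectivity of $Q'$ in $P[0,p]$, and the identical argument (a chain inside $P[0,p]$ is a fortiori a chain in $P$) gives projectivity of $Q'$ in $P$.

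The main obstacle — really the only nontrivial point — is the relativization of central orthogonality: one must be careful that a family which is centrally orthogonal \emph{as computed in $P[0,p]$} (i.e.\ witnessed by central projections of $P[0,p]$) is genuinely centrally orthogonal \emph{in $P$}, and conversely. This is precisely where the hypothesis ``$P$ complete or $p\in P\cap C(A)$'' is used, and it is exactly the content of Theorem~\ref{th:relativelycentorth}(i); without it, the center of $P[0,p]$ need not consist of projections of the form $pc$, and the argument collapses. Everything else is bookkeeping: checking that a projection produced by a closure operation in $P[0,p]$ still lies below $p$ (automatic), and that the operations $\vee$, $\wedge$ agree whether computed in $P$ or in $P[0,p]$ (already recorded in Section~\ref{sc:OMLs}).
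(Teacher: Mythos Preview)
Your argument is correct and closely parallels the paper's own proof. For TD and STD you invoke Theorem~\ref{th:relativelycentorth} to relativize central orthogonality and the description of the center, exactly as the paper does; for the projective case you go via Lemma~\ref{lm:strongpimpliesp}(i) (perspectivity lifts from $P[0,p]$ to $P$, hence so does projectivity), whereas the paper uses Lemma~\ref{lm:rAr} (a symmetry in $pAp$ extends to a symmetry in $A$). Both routes are valid and equally short.

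One caveat: your final sentence, that ``the identical argument (a chain inside $P[0,p]$ is a fortiori a chain in $P$) gives projectivity of $Q'$ in $P$,'' does not establish what it claims. Projectivity of $Q'$ \emph{in $P$} requires that every $e\in P$ projective in $P$ to some $q\in Q'$ lie in $Q'$; but such a projectivity chain need not stay inside $P[0,p]$, so you cannot conclude $e\le p$. In fact this fails in general when $p$ is not central: with $Q=P$ (trivially projective) and $p$ any non-central projection, $Q'=P[0,p]$ is not closed under equivalence in $P$. The paper's own proof likewise proves projectivity only in $P[0,p]$ and does not address ``projective in $P$''; the phrase ``both in $P$ and in $P[0,p]$'' in the statement should be read as attaching to the TD and STD clauses, or else restricted to the case $p\in P\cap C(A)$, where $q\le p$ and $q\sim e$ do force $e\le\gamma e=\gamma q\le p$ by Lemma~\ref{lm:simandcentcov}(i).
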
 

\begin{proof}
Assume the hypotheses. Then, as in the proof of Theorem 
\ref{th:relativelycentorth}, the center of $P[0,p]$ is 
$\{pc:c\in P\cap C(A)\}$; moreover, by Theorem 
\ref{th:relativelycentorth}, $P[0,p]$ is centrally 
orthocomplete and a family in $Q\cap P[0,p]$ is centrally 
orthogonal in $P[0,p]$ iff it is centrally orthogonal in 
$P$. Consequently, $Q\cap P[0,p]$ is closed under the formation 
of suprema of centrally orthogonal families. If $d$ belongs 
to the center of $P[0,p]$, then $d=pc$ for some $c\in P
\cap C(A)$, whence, for any $q\in Q\cap P[0,p]$, we have 
$qd=qpc=qc\in Q\cap P[0,p]$, 
and therefore $Q\cap P[0,p]$ is TD in $P[0,p]$. If $Q$ is STD, 
it is clear that $Q\cap P[0,p]$ is STD in $P[0,p]$. Suppose $Q$ 
is projective, let $q\in Q\cap P[0,p]$, and suppose $q$ and a 
projection $r\in P[0,p]$ are exchanged by a symmetry in $pAp$. Then, 
by Lemma \ref{lm:rAr}, $q$ and $r$ are exchanged by a symmetry 
in $A$, whence $r\in Q\cap P[0,p]$. 
\end{proof}
 
\begin{definition} \label{df:OMLtypeclass}
A nonempty class ${\mathcal L}$ of OMLs is called an \emph{OML type 
class} iff the following conditions are satisfied: (1) If $L\in 
{\mathcal L}$ and $c$ belongs to the center of $L$, then $L[0,c] 
\in{\mathcal L}$. (2) ${\mathcal L}$ is closed under the formation 
of arbitrary cartesian products. (3) If $L\sb{1}$ and $L\sb{2}$ are 
isomorphic OMLs and $L\sb{1}\in{\mathcal L}$, then $L\sb{2}\in
{\mathcal L}$. If, in addition to (2) and (3), ${\mathcal L}$ satisfies 
(1$'$) if $L\in{\mathcal L}$, then $p\in L\Rightarrow L[0,p]\in
{\mathcal L}$, then ${\mathcal L}$ is called a \emph{strong OML type 
class}. 
\end{definition}

Some examples of strong OML type classes are the following: The class 
of all boolean algebras, all modular OMLs, all complete OMLs, all 
$\sigma$-complete OMLs, and all atomic OMLs. Obviously, the 
intersection of (strong) OML type classes is again a (strong) OML 
type class. For instance, the class of all complete modular OMLs is 
a strong OML type class. The class of all locally modular OMLs provides 
an example of an OML type class that is not strong; however the class 
of all complete locally modular OMLs is a strong OML type class.

\begin{theorem} \label{th:classtoset}
If ${\mathcal Q}$ is a OML type class {\rm(}respectively, a strong OML 
type class{\rm)}, then $Q:=\{q\in P:P[0,q]\in{\mathcal Q}\}$ is a 
projective TD set {\rm(}respectively, a projective STD set{\rm)}. 
\end{theorem}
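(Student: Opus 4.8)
The plan is to verify the three defining closure properties of a (strong) OML type class translate, via the dictionary $p \mapsto P[0,p]$, into exactly the conditions that $Q = \{q \in P : P[0,q] \in \mathcal{Q}\}$ be TD (respectively STD) and projective. So I would argue in three blocks, each invoking one of the three relevant results established earlier in the excerpt, plus a fourth block handling the supremum-of-centrally-orthogonal-families condition.

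First, $[Q] \subseteq Q$: let $(p_i)_{i \in I}$ be a centrally orthogonal family of projections in $Q$ with $p := \bigvee_i p_i$. By Theorem~\ref{th;carprod}, $P[0,p]$ is OML-isomorphic to the cartesian product $\times_{i \in I} P[0,p_i]$; since each $P[0,p_i] \in \mathcal{Q}$ and $\mathcal{Q}$ is closed under cartesian products (clause (2)) and under OML-isomorphism (clause (3)), we get $P[0,p] \in \mathcal{Q}$, i.e.\ $p \in Q$. This uses only clauses (2) and (3), so it holds in both the TD and STD cases.

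Second, the ``relativize by the center'' condition. For the TD case I must show $Q^{\gamma} \subseteq Q$: given $q \in Q$ and $c \in P \cap C(A)$, I need $P[0, q \wedge c] \in \mathcal{Q}$. Since $q \in P \cap C(A)$ is central in $P$, $q \wedge c$ lies in the center of $P[0,q]$ (by property (ix) of Section~\ref{sc:OMLP}, or directly since $c$ is central in $P$ hence $q \wedge c = qc$ is central in $P[0,q]$); then clause (1) of the OML type class definition gives $(P[0,q])[0, q\wedge c] = P[0, q\wedge c] \in \mathcal{Q}$, so $q \wedge c \in Q$. For the STD case I must instead show $Q^{\downarrow} \subseteq Q$: given $q \in Q$ and $r \leq q$, the interval $(P[0,q])[0,r] = P[0,r]$ belongs to $\mathcal{Q}$ by clause (1$'$), so $r \in Q$. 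In each case the key point is the identification $(P[0,q])[0,x] = P[0,x]$ for $x \leq q$, which is immediate from the definition of the relative interval.

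Third, projectivity of $Q$: suppose $q \in Q$ and $q$ is projective to $p \in P$. By Lemma~\ref{lm:sim&proj}, $q \sim p$, so by Lemma~\ref{lam:xpxstar}(iv), $P[0,q]$ is OML-isomorphic to $P[0,p]$; since $P[0,q] \in \mathcal{Q}$ and $\mathcal{Q}$ is closed under OML-isomorphism, $P[0,p] \in \mathcal{Q}$, hence $p \in Q$. This block uses only clause (3). I do not anticipate a genuine obstacle here --- the whole argument is a matter of matching each of the three structural closure axioms to the corresponding intrinsic construction on intervals of $P$ --- but the point requiring the most care is the centrality bookkeeping in the TD case (the second block), where one must be sure that an element of the form $q \wedge c$ with $q$ central in $P$ and $c$ central in $P$ is indeed central in the relativized lattice $P[0,q]$, so that clause (1) (which quantifies only over central elements of $P[0,q]$) actually applies.
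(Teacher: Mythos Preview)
Your proposal is correct and follows essentially the same approach as the paper: the cartesian-product isomorphism (Theorem~\ref{th;carprod}) for $[Q]\subseteq Q$, property (ix) of Section~\ref{sc:OMLP} for $Q^{\gamma}\subseteq Q$, clause (1$'$) for $Q^{\downarrow}\subseteq Q$, and OML-isomorphism invariance for projectivity. One small slip to fix: in your second block (and again in your closing remark) you write ``Since $q\in P\cap C(A)$ is central in $P$,'' but it is $c$, not $q$, that is central; your parenthetical and your citation of property (ix) show you already have the right argument in mind, so this is just a typo to correct.
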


\begin{proof}
Assume that ${\mathcal Q}$ is a OML type class and $Q:=\{q\in P:
P[0,q]\in{\mathcal Q}\}$. Suppose that $(q\sb{i})\sb{i\in I}$ is 
a centrally orthogonal family in $Q$. Since $P$ is centrally 
orthocomplete (Assumption \ref{as:CO}), $q:=\bigvee\sb{i\in I}q
\sb{i}$ exists in $P$. For every $i\in I$, $P[0,q\sb{i}]\in{\mathcal 
Q}$, whence $X:=${\huge$\times$}$\sb{i\in I}P[0,q\sb{i}]\in
{\mathcal Q}$. By Theorem \ref{th;carprod}, $X$ is isomorphic  
as an OML to $P[0,q]$, so $P[0,q]\in{\mathcal Q}$, and therefore 
$q\in Q$. Thus, $[Q]\subseteq Q$. 

Let $q\in Q$ and $c\in P\cap C(A)$. Then $P[0,q]\in{\mathcal Q}$ and, 
$q\wedge c=qc$ belongs to the center of $P[0,q]$, whence $P[0,qc]=
(P[0,q])[0,qc]\in{\mathcal Q}$, and so $qc\in Q$. This proves that 
$Q\sp{\gamma}\subseteq Q$, so $Q$ is a TD-set. To prove that $Q$ is 
projective, let $s\in A$ be a symmetry. Then $P[0,sqs]$ is isomorphic 
as an OML to $P[0,q]\in{\mathcal Q}$, whereupon $P[0,sqs]\in{\mathcal Q}$, 
and we have $sqs\in Q$.

To complete the proof, suppose that ${\mathcal Q}$ is a strong 
OML type class, let $q\in Q$ and suppose $p\in P[0,q]$. Then 
$P[0,p]\in{\mathcal Q}$, and it follows that $(P[0,p])[0,q]=
P[0,q]\in{\mathcal Q}$, whence $p\in Q$.   
\end{proof}

If $Q\subseteq P$, we understand that $\gamma(Q):=\{\gamma q:
q\in Q\}$. The following theorem is an adaptation to our 
present context of \cite[Theorem 4.5 and Corollary 4.6]{FPType}. 

\begin{theorem} \label{th:TDEAth4.5}
Let $Q\subseteq P$ be a TD set. Then{\rm: (i)} $Q\cap \gamma(Q)=
Q\cap C(A)\subseteq \gamma(Q)\subseteq P\cap C(A)$. {\rm(ii)} There 
is a unique central projection $c\sb{Q}\in P\cap C(A)$ such that 
$\gamma(Q)=(P\cap C(A))[0,c\sb{Q}]$. {\rm(iii)}There is a unique 
central projection $c\sb{Q\cap C(A)}\in P\cap C(A)$ such that 
$Q\cap\gamma(Q)=Q\cap C(A)=(P\cap C(A))[0,c\sb{Q\cap C(A)}]$. 
{\rm(iv)} Both $\gamma(Q)$ and $Q\cap\gamma(Q)=Q\cap C(A)$ are 
TD subsets of $P$. 
\end{theorem}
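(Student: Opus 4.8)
The plan is to lean on three basic properties of the central cover $\gamma$ available under Assumption \ref{as:CO}: it maps $P$ into the complete boolean algebra $P\cap C(A)$, it fixes every central projection, and (as noted following Definition \ref{df:centcover}) it preserves arbitrary existing suprema and satisfies $\gamma(p\wedge c)=\gamma p\wedge c$ for $c\in P\cap C(A)$. Part (i) is then bookkeeping: $\gamma(Q)\subseteq P\cap C(A)$ is immediate; if $q\in Q\cap C(A)$ then $\gamma q=q$, so $q\in Q\cap\gamma(Q)$, giving $Q\cap C(A)\subseteq Q\cap\gamma(Q)\subseteq\gamma(Q)$; and conversely any $x\in Q\cap\gamma(Q)$ lies in $\gamma(Q)\subseteq C(A)$, so $Q\cap\gamma(Q)=Q\cap C(A)$.

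For part (ii), I would first show that $\gamma(Q)$ is an order ideal of the boolean algebra $P\cap C(A)$: if $e=\gamma q\in\gamma(Q)$ with $q\in Q$ and $d\in P\cap C(A)$ with $d\le e$, then $d=d\wedge\gamma q=\gamma(q\wedge d)$ with $q\wedge d\in Q^{\gamma}\subseteq Q$ (as $Q$ is TD), so $d\in\gamma(Q)$. Next, $\gamma(Q)$ is closed under suprema of pairwise orthogonal families: given an orthogonal family $(e_{i})_{i\in I}\subseteq\gamma(Q)$, write $e_{i}=\gamma q_{i}$ with $q_{i}\in Q$; then $(q_{i})_{i\in I}$ is centrally orthogonal, witnessed by the orthogonal central family $(\gamma q_{i})_{i\in I}=(e_{i})_{i\in I}$, so $\bigvee_{i}q_{i}\in[Q]\subseteq Q$ and $\gamma(\bigvee_{i}q_{i})=\bigvee_{i}\gamma q_{i}=\bigvee_{i}e_{i}\in\gamma(Q)$. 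Combining the order-ideal property with the usual disjointification of an arbitrary family in a complete boolean algebra (well-order the index set and replace $e_{i}$ by $e_{i}\wedge(\bigvee_{j<i}e_{j})^{\perp}$), one gets that $\gamma(Q)$ is closed under \emph{arbitrary} suprema, so $c_{Q}:=\bigvee\gamma(Q)$ belongs to $\gamma(Q)$. Hence $\gamma(Q)=(P\cap C(A))[0,c_{Q}]$, the inclusion $\subseteq$ being the definition of $c_{Q}$ and $\supseteq$ the order-ideal property applied to $c_{Q}$; uniqueness of $c_{Q}$ is clear since a principal ideal of a boolean algebra determines its top element.

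Part (iv) is then quick: by (ii), $\gamma(Q)=(P\cap C(A))[0,c_{Q}]$ is a principal ideal of central projections in the complete boolean algebra $P\cap C(A)$, and it is routine that such a set is TD (a centrally orthogonal family in it consists of pairwise orthogonal central projections $\le c_{Q}$, whose join is again central and $\le c_{Q}$; meeting any member with a central projection stays $\le c_{Q}$). Consequently $Q\cap\gamma(Q)$ is TD, being the intersection of the two TD sets $Q$ and $\gamma(Q)$ (intersections of TD sets are TD). Part (iii) then follows by applying part (ii) to the TD set $Q\cap C(A)=Q\cap\gamma(Q)$: every element of $Q\cap C(A)$ is central, so $\gamma$ fixes it and $\gamma(Q\cap C(A))=Q\cap C(A)$; thus (ii) supplies the unique central projection $c_{Q\cap C(A)}$ with $(P\cap C(A))[0,c_{Q\cap C(A)}]=\gamma(Q\cap C(A))=Q\cap C(A)$.

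The only step requiring genuine care is showing $c_{Q}\in\gamma(Q)$ in part (ii); the rest is formal. The crux there is the passage from an orthogonal family in $\gamma(Q)$ back to a centrally orthogonal family in $Q$ (taking the central witnesses to be the central covers themselves), which is exactly what lets the defining closure property $[Q]\subseteq Q$ of a TD set be invoked and then transported forward along the supremum-preserving map $\gamma$.
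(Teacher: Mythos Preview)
The paper does not supply its own proof of this theorem; it simply records the statement as an adaptation of \cite[Theorem 4.5 and Corollary 4.6]{FPType}. Your argument is correct and provides exactly the details the paper omits. Part (i) is the expected bookkeeping (using that $\gamma$ fixes central projections), and the crux of (ii)---that $\gamma(Q)$ is a principal order ideal of the complete boolean algebra $P\cap C(A)$---is handled properly: you verify the order-ideal property via $\gamma(q\wedge d)=\gamma q\wedge d$ and $Q^{\gamma}\subseteq Q$, then closure under orthogonal suprema via $[Q]\subseteq Q$ together with the fact that $\gamma$ preserves existing suprema, and finally pass to arbitrary suprema by disjointification. Your derivations of (iv) (intersection of TD sets is TD) and (iii) (apply (ii) to $Q\cap C(A)$, on which $\gamma$ is the identity) are accurate. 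One cosmetic remark: in place of the well-ordering/disjointification step you could equivalently invoke Zorn's lemma to select a maximal pairwise-orthogonal family in $\gamma(Q)$; its supremum lies in $\gamma(Q)$ and must equal $\bigvee\gamma(Q)$ by the order-ideal property, which avoids transfinite indexing.
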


\begin{definition} [{Cf. \cite[Definition 4.7]{FPType}}]
Let $Q$ be a TD subset of $P$. Then the central projection 
$c\sb{Q}$ in Theorem \ref{th:TDEAth4.5} (ii) is called the 
\emph{type-cover} of $Q$, and the central projection 
$c\sb{Q\cap C(A)}$ in Theorem \ref{th:TDEAth4.5} (iii) is 
called the \emph{restricted type-cover} of $Q$.  
\end{definition}

\noindent The type cover $c\sb{Q}$ and the restricted type 
cover $c\sb{Q\cap C(A)}$ will play significant roles in Sections 
\ref{sc:fundamental} and \ref{sc:I/II/III} below. 

\begin{lemma} [{\cite[Lemma 4.8]{FPType}}] \label{lm:typecover}
Let $Q\subseteq P$ be a TD set. Then{\rm: (i)} $c\sb{Q}$ is 
the largest projection in $\gamma(Q)$ and every central 
subprojection of $c\sb{Q}$ belongs to $\gamma(Q)$. {\rm(ii)} 
$c\sb{Q\cap C(A)}$ is the largest central projection in $Q$. 
{\rm(iii)} $c\sb{Q\cap C(A)}\leq c\sb{Q}$. {\rm(iv)} The 
smallest central projection $c\in P\cap C(A)$ such that 
$Q\subseteq P[0,c]$ is $c=c\sb{Q}$.  {\rm(v)} The smallest 
central projection $d\in P\cap C(A)$ such that $Q\cap 
C(A)\subseteq P[0,d]$ is $d=c\sb{Q\cap C(A)}$. {\rm(vi)} 
If $c\in P\cap C(A)$, then $c\perp c\sb{Q}$ iff $Q\cap 
P[0,c]=\{0\}$. {\rm(vii)} If $c\in P\cap C(A)$, then 
$c\perp c\sb{Q\cap C(A)}$ iff $Q\cap (P\cap C(A))[0,c]=
\{0\}$.
\end{lemma}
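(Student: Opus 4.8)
The plan is to prove the seven statements of Lemma~\ref{lm:typecover} in turn, drawing on Theorem~\ref{th:TDEAth4.5} and the basic facts about the central cover $\gamma$ and the complete boolean algebra $P\cap C(A)$ recorded in Section~\ref{sc:OMLP}. Throughout I would use the characterization established earlier that, for $a\in A$ and $c\in P\cap C(A)$, one has $a=ac\Leftrightarrow\gamma a\leq c$, together with the fact that $\gamma$ restricted to $P$ is order preserving and that $\gamma(p\wedge c)=\gamma p\wedge c$ for $c\in P\cap C(A)$.

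First I would dispose of (i). By Theorem~\ref{th:TDEAth4.5}(ii), $\gamma(Q)=(P\cap C(A))[0,c_{Q}]$, so $c_{Q}$ itself lies in $\gamma(Q)$ and is its top element, while every central $d\leq c_{Q}$ lies in $(P\cap C(A))[0,c_{Q}]=\gamma(Q)$; that is exactly (i). Statement (ii) is the analogous reading of Theorem~\ref{th:TDEAth4.5}(iii): since $Q\cap C(A)=(P\cap C(A))[0,c_{Q\cap C(A)}]$, the projection $c_{Q\cap C(A)}$ is the largest element of $Q\cap C(A)$, and as $Q\cap C(A)$ is precisely the set of central projections belonging to $Q$ (a central projection is in $C(A)$ by definition), this is the largest central projection in $Q$. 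For (iii), combine (i) and (ii): $c_{Q\cap C(A)}\in Q\cap C(A)\subseteq Q\cap\gamma(Q)\subseteq\gamma(Q)$ by Theorem~\ref{th:TDEAth4.5}(i), and since $c_{Q}$ is the top of $\gamma(Q)$ we get $c_{Q\cap C(A)}\leq c_{Q}$.

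Next come the ``smallest central projection'' characterizations (iv) and (v), which I expect to be the technically most delicate part and hence the main obstacle. For (iv): if $c\in P\cap C(A)$ and $Q\subseteq P[0,c]$, then for every $q\in Q$ we have $q=qc$, so $\gamma q\leq c$; thus $c$ is an upper bound for $\gamma(Q)$, hence $c_{Q}=\bigvee\gamma(Q)\leq c$ (using that $P\cap C(A)$ is a complete boolean algebra and $c_{Q}$ is the top of $\gamma(Q)$ by (i), so in particular $\gamma(Q)$ has a supremum, namely $c_{Q}$). Conversely $Q\subseteq P[0,c_{Q}]$ because $q\leq\gamma q\leq c_{Q}$ for each $q\in Q$, using $\gamma q\in\gamma(Q)$ and (i). The delicate point to get right is that $Q\subseteq P[0,c_Q]$ really does hold for \emph{every} $q\in Q$, not merely for those $q$ of the form $q\wedge c$; this follows from $q\leq\gamma q$ and $\gamma q\in\gamma(Q)=(P\cap C(A))[0,c_Q]$. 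Statement (v) is proved by the same argument applied to the TD set $Q\cap C(A)$ (which is TD by Theorem~\ref{th:TDEAth4.5}(iv)): noting that for a \emph{central} projection $q$ one has $\gamma q=q$, we get $\gamma(Q\cap C(A))=Q\cap C(A)=(P\cap C(A))[0,c_{Q\cap C(A)}]$, and the minimality argument of (iv) gives that $c_{Q\cap C(A)}$ is the smallest central $d$ with $Q\cap C(A)\subseteq P[0,d]$.

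Finally (vi) and (vii). For (vi), suppose $c\in P\cap C(A)$. If $c\perp c_{Q}$ and $q\in Q\cap P[0,c]$, then $q\leq c$ and $q\leq\gamma q\leq c_{Q}$, so $q\leq c\wedge c_{Q}=0$; hence $Q\cap P[0,c]=\{0\}$. Conversely, suppose $Q\cap P[0,c]=\{0\}$; I would show $c\wedge c_{Q}=0$. Since $c\wedge c_{Q}\leq c_{Q}$ is central, by (i) it lies in $\gamma(Q)=(P\cap C(A))[0,c_Q]$, so it is the central cover of some projection $q_0\in Q$ after intersecting appropriately — more carefully, one argues that if $c\wedge c_{Q}\neq 0$ there must be a nonzero $q\in Q$ with $\gamma q\leq c\wedge c_Q$; then $q\wedge c\in Q$ (by $Q^{\gamma}\subseteq Q$) and $q\wedge c=q\neq 0$ lies in $Q\cap P[0,c]$, a contradiction. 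Actually the cleanest route: $c\perp c_Q$ is equivalent to $c_Q\leq c^{\perp}$, i.e. to $Q\subseteq P[0,c^{\perp}]$ by (iv), which in turn says exactly that no nonzero projection of $Q$ has a nonzero component under $c$; and since $Q^{\gamma}\subseteq Q$, $Q\subseteq P[0,c^{\perp}]$ iff $Q\cap P[0,c]=\{0\}$. I would spell out that last equivalence: if $q\in Q$ then $q\wedge c\in Q\cap P[0,c]$, so the right side forces $q\wedge c=0$, i.e. $q\leq c^{\perp}$; the reverse implication is trivial. Statement (vii) is the same argument applied to the TD set $Q\cap C(A)$, using (v) in place of (iv) and the fact that for central $q$ the component $q\wedge c$ is again central, so lands in $Q\cap C(A)$.
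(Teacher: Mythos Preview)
Your proof is correct. The paper does not actually supply a proof of this lemma; it simply cites \cite[Lemma 4.8]{FPType}. Your argument is a self-contained derivation from Theorem~\ref{th:TDEAth4.5} and the basic properties of $\gamma$ recorded in Section~\ref{sc:OMLP}, which is exactly what a reader without access to \cite{FPType} would need. One small remark on exposition: in part (vi) your first attempt at the converse is a bit tangled before you switch to the ``cleanest route''; the clean route (via (iv), reducing $c\perp c_Q$ to $Q\subseteq P[0,c^{\perp}]$ and then using $Q^{\gamma}\subseteq Q$ to get $Q\cap P[0,c]=\{0\}$) is entirely sound and is the one to keep.
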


\begin{definition} \label{df:faithful}
A projection $f\in P$ is \emph{faithful} iff $\gamma f=1$.
\end{definition}

\noindent Clearly, $f\in P$ is faithful iff the only 
central projection $c\in P\cap C(A)$ such that $f\leq c$ 
is $c=1$. The next lemma clarifies how faithfulness 
relativizes to a direct summand of $P$. 

\begin{lemma} [{\cite[Lemma 3.5]{FPType}}] \label{lm:relfaith}
Let $c\in P\cap C(A)$ and let $f\in P[0,c]$. Then the 
following conditions are mutually equivalent{\rm: (i)} 
$f$ is faithful in the projection lattice $P[0,c]$ of 
$cA$. {\rm (ii)} $\gamma f=c$. {\rm(iii)} $\gamma(P[0,f])$
is the center of $P[0,c]$. {\rm(iv)} $f$ has a nonzero 
component in every nonzero direct summand of $P[0,c]$, 
i.e., if $0\not=d\in(P\cap C(A))[0,c]$, then $fd\not=0$. 
\end{lemma}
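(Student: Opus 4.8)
The plan is to prove the chain of equivalences cyclically: (i) $\Rightarrow$ (ii) $\Rightarrow$ (iii) $\Rightarrow$ (iv) $\Rightarrow$ (i), exploiting the fact that $cA$ is itself a synaptic algebra (with enveloping algebra $cRc$, unit $c$) whose projection lattice is $P[0,c]$, and whose center is $(P\cap C(A))[0,c]$ by item (x) of Section~\ref{sc:OMLP}. The first observation to record is that for $f\in P[0,c]$, the central cover $\gamma f$ computed in $A$ satisfies $\gamma f\leq c$ (since $f\leq c$ and $c$ is central, $fc=f$ forces $\gamma f\leq c$), and $\gamma f$ is in fact the central cover of $f$ computed in the synaptic algebra $cA$; this is because a projection $d\in(P\cap C(A))[0,c]$ is central in $cA$ and satisfies $fd=f$ iff $d\in P\cap C(A)$ with $f\leq d$, and the smallest such is $\gamma f$. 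With this identification, (i), which says the central cover of $f$ in $cA$ equals the unit $c$ of $cA$, is literally (ii).

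For (ii) $\Rightarrow$ (iii): assuming $\gamma f=c$, I want to show $\gamma(P[0,f])=(P\cap C(A))[0,c]$, the center of $P[0,c]$. One inclusion is immediate: if $e\leq f$ then $\gamma e\leq\gamma f=c$, so $\gamma(P[0,f])\subseteq(P\cap C(A))[0,c]$. For the reverse, note that $c=\gamma f\in\gamma(P[0,f])$ (take $e=f$), and every central subprojection of a member of $\gamma(P[0,f])$ should again lie in it: if $e\leq f$ and $d\in P\cap C(A)$ then $ed\leq f$ with $\gamma(ed)=\gamma e\wedge d$ (by the central-cover property quoted after Definition~\ref{df:centcover}), and in particular for any $d'\in(P\cap C(A))[0,c]$ we get $d'=d'\wedge c=d'\wedge\gamma f=\gamma(fd')\in\gamma(P[0,f])$. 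That gives (iii).

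For (iii) $\Rightarrow$ (iv): suppose $0\neq d\in(P\cap C(A))[0,c]$; by (iii), $d=\gamma(e)$ for some $e\in P[0,f]$, and since $d\neq 0$ we have $e\neq 0$; then $fd\geq e\neq 0$ because $e\leq f$ and $e\leq\gamma e=d$ give $e=e\wedge d\leq f\wedge d=fd$. Hence $fd\neq 0$, which is (iv). For (iv) $\Rightarrow$ (i): working in the synaptic algebra $cA$, let $c'$ be the central cover of $f$ in $cA$, so $c'\in(P\cap C(A))[0,c]$ and $f\leq c'$; then $d:=c-c'=c\wedge c'^{\perp}$ is a central projection of $cA$ with $fd\leq f\wedge(c\wedge c'^{\perp})\leq c'\wedge c'^{\perp}=0$. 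By (iv) this forces $d=0$, i.e.\ $c'=c$, which is exactly (i).

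The main obstacle, such as it is, is purely bookkeeping: keeping straight the two ambient synaptic algebras $A$ and $cA$ and checking that "central cover" and "center of the projection lattice" relativize correctly to $cA$. Once the identification $\gamma f$ (in $A$) $=$ (central cover of $f$ in $cA$) is pinned down using items (x) and (vi)--(vii) of Section~\ref{sc:OMLP} together with Definition~\ref{df:centcover}, each implication is a one- or two-line order-theoretic computation with $\gamma$, and nothing deeper than the already-quoted properties $\gamma(p\wedge c)=\gamma p\wedge c$ and $a=ac\Leftrightarrow\gamma a\leq c$ is needed.
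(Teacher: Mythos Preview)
Your argument is correct. The paper itself does not supply a proof of this lemma---it simply cites \cite[Lemma 3.5]{FPType}---so there is no in-paper proof to compare against; your cyclic scheme (i)$\Leftrightarrow$(ii)$\Rightarrow$(iii)$\Rightarrow$(iv)$\Rightarrow$(i), anchored by the identification of the central cover of $f$ in $cA$ with $\gamma f$ in $A$ (using item (x) of Section~\ref{sc:OMLP} and the relation $\gamma(p\wedge c)=\gamma p\wedge c$ following Definition~\ref{df:centcover}), is a complete self-contained verification.
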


\begin{lemma} \label{lm:orthodense}
Let $Q\subseteq P$ and let $c:=\bigvee\gamma(Q)$. Then 
$Q\subseteq P[0,c]$ and the following conditions are 
mutually equivalent{\rm: (i)} If $p\in P$, $d\in\gamma(Q)$, 
and $pd\not=0$, then $Q\cap P[0,p]\not=\{0\}$. {\rm(ii)} 
If $0\not=p\in P[0,c]$, then $Q\cap P[0,p]\not=\{0\}$. 
{\rm(iii)} $Q$ is orthodense in $P[0,c]$.
\end{lemma}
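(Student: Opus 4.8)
First I would establish the preliminary claim that $Q\subseteq P[0,c]$. For each $q\in Q$ we have $q\le\gamma q\le\bigvee\gamma(Q)=c$, so $q\in P[0,c]$; this is immediate from the definition of the central cover and needs no further comment in the write-up.

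Next I would prove the cyclic chain of implications (i) $\Rightarrow$ (ii) $\Rightarrow$ (iii) $\Rightarrow$ (i). For (i) $\Rightarrow$ (ii): given $0\ne p\in P[0,c]$, I want to produce $d\in\gamma(Q)$ with $pd\ne0$ and then invoke (i). The key point is that $c=\bigvee\gamma(Q)$, so if $pd=0$, i.e. $p\le d^{\perp}$, held for every $d\in\gamma(Q)$, then $p\le\bigwedge_{d\in\gamma(Q)}d^{\perp}$; since $\gamma(Q)$ consists of central projections and the center is a complete boolean algebra, $\bigwedge d^{\perp}=\bigl(\bigvee d\bigr)^{\perp}=c^{\perp}$, giving $p\le c\wedge c^{\perp}=0$, a contradiction. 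Hence some $d\in\gamma(Q)$ has $pd\ne0$, and (i) yields $Q\cap P[0,p]\ne\{0\}$. For (ii) $\Rightarrow$ (iii): let $p\in P[0,c]$ be arbitrary. Using Zorn's lemma, choose a maximal orthogonal family $(q_i)_{i\in I}$ of nonzero projections in $Q\cap P[0,p]$. Since $P$ is centrally orthocomplete and hence — I would note — orthogonal families bounded above by a single projection are orthosummable (the family $(q_i)$ lies in $P[0,p]$, which by Theorem~\ref{th:relativelycentorth} / the remarks following Definition~\ref{df:centcover} is itself well behaved; more directly one may appeal to the fact that any orthogonal family dominated by $p$ has a supremum in $P[0,p]\subseteq P$), put $r:=\bigvee_{i\in I}q_i\le p$. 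If $r<p$ then $0\ne p-r=p\wedge r^{\perp}\in P[0,c]$, so by (ii) there is a nonzero $q\in Q$ with $q\le p-r$; but then $q$ is orthogonal to every $q_i$ and $q\in Q\cap P[0,p]$, contradicting maximality. Hence $p=\bigvee_i q_i$ is the supremum of an orthogonal family in $Q$, i.e. $Q$ is orthodense in $P[0,c]$. Finally (iii) $\Rightarrow$ (i): suppose $p\in P$, $d\in\gamma(Q)$, and $pd\ne0$. Then $0\ne pd\le d\le c$, so $pd\in P[0,c]$, and by (iii) $pd$ is the supremum of an orthogonal family of projections in $Q$; such a family cannot be empty (its supremum would be $0$), so there is a nonzero $q\in Q$ with $q\le pd\le p$, giving $Q\cap P[0,p]\ne\{0\}$.

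The step I expect to require the most care is the orthosummability invoked in (ii) $\Rightarrow$ (iii): I must make sure the maximal orthogonal family I pick actually has a supremum. This is not guaranteed by mere central orthocompleteness for an arbitrary orthogonal family, so I would lean on the fact that the family is bounded above by the single projection $p$ and use that any orthogonal family in $P$ dominated by a fixed projection lies in a relatively complete situation — precisely, one reduces to $P[0,p]$ and uses the part of the structure theory (e.g. the relative center property when $P$ is complete, or otherwise a direct orthocompleteness argument) guaranteeing the supremum exists; alternatively, since $d^{\perp}$ for $d\in\gamma(Q)$ are central, one can organize the $q_i$ into a centrally orthogonal refinement and apply central orthocompleteness directly. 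I would phrase the argument so as to use only the hypotheses already in force (Assumption~\ref{as:CO}), supplemented if necessary by the cited structural results. The remaining implications are routine lattice-theoretic manipulations in the complete boolean algebra $P\cap C(A)$ together with the defining property of $\bigvee\gamma(Q)$.
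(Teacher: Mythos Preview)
Your overall strategy (cyclic implications, the De~Morgan argument for (i)$\Rightarrow$(ii), the maximal orthogonal family for (ii)$\Rightarrow$(iii), and the easy (iii)$\Rightarrow$(i)) is sound, and in fact supplies the details that the paper omits---the paper's proof is simply a citation to \cite[Lemma~5.3]{HandD}. So methodologically you are going beyond the paper rather than deviating from it.

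However, the point you flag as delicate is genuinely unresolved in your write-up. The assertion that ``orthogonal families bounded above by a single projection are orthosummable'' does \emph{not} follow from central orthocompleteness, and none of your proposed repairs work: the relative center property requires completeness of $P$, which is not assumed here; Theorem~\ref{th:relativelycentorth} only transfers \emph{central} orthogonality, not ordinary orthogonality; and there is no evident way to refine an arbitrary orthogonal family in $Q\cap P[0,p]$ into a centrally orthogonal one. As written, the step ``put $r:=\bigvee_{i}q_i$'' is unjustified.

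The fix is to avoid forming $r$ altogether and instead prove directly that $p$ is the supremum of the maximal family $(q_i)_{i\in I}$. Certainly $p$ is an upper bound. If $u$ is any upper bound, then each $q_i\le p\wedge u\le p$. Were $p\wedge u<p$, orthomodularity would give a nonzero $v:=p\wedge(p\wedge u)^{\perp}\in P[0,c]$; by (ii) there is $0\neq q\in Q$ with $q\le v$, and since $q_i\le p\wedge u$ we get $q\perp q_i$ for all $i$, contradicting maximality. Hence $p\le u$, so $p=\bigvee_i q_i$ as required. With this adjustment your proof is complete and uses only Assumption~\ref{as:CO}.
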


\begin{proof}
If $q\in Q$, then $q\leq\gamma q\leq c$, so $Q\subseteq P[0,c]$. 
The rest of the lemma follows from \cite[Lemma 5.3]{HandD} by taking 
$\eta=\gamma$.
\end{proof}

\begin{theorem} [{Cf. \cite[Propositions 13 and 16]{Top65}}] 
\label{th:TopProp13}
Suppose that $P$ is complete, $Q\sp{\downarrow}\subseteq Q
\subseteq P$, $Q$ is projective, and $c=\bigvee\{\gamma q:
q\in Q\}$. Then {\rm(i)} $Q\subseteq P[0,c]$. {\rm(ii)} 
$Q$ is orthodense in $P[0,c]$. {\rm(iii)} $c=\bigvee Q$. 
{\rm(iv)} If $Q$ is TD, then $c=c\sb{Q}\in\gamma(Q)$.  
\end{theorem}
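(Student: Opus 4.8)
The plan is to prove the four parts essentially in the order stated, leaning on the machinery already set up.

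\medskip

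First, part (i) is immediate: for any $q\in Q$ we have $q\le\gamma q$, and $\gamma q\le\bigvee\{\gamma q':q'\in Q\}=c$, so $q\in P[0,c]$, giving $Q\subseteq P[0,c]$.

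\medskip

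For part (ii), I would invoke Lemma \ref{lm:orthodense}. That lemma, applied to our $Q$ with $c=\bigvee\gamma(Q)$, tells us that $Q$ is orthodense in $P[0,c]$ as soon as we verify one of its equivalent conditions, and the most convenient is condition (i): if $p\in P$, $d\in\gamma(Q)$, and $pd\ne0$, then $Q\cap P[0,p]\ne\{0\}$. So the real content is to establish this implication. Here is where I expect the main obstacle to lie, and where completeness of $P$ and the projectivity of $Q$ must both be used. Suppose $pd\ne0$ with $d=\gamma q$ for some $q\in Q$. The idea is to use Generalized Comparability (Theorem \ref{th:vargencomp}) to compare $q$ with $pd$ (or with $p\wedge d$): there is a symmetry $s$ and a central projection $e$ so that, on the $e$-summand, a subprojection of $sqs$ is dominated by (a piece of) $p\wedge d$, and on the $e^{\perp}$-summand the reverse domination holds. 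Because $d=\gamma q$ is the central cover of $q$, the element $q$ cannot ``hide'' inside any proper central summand cut down by $d$; combining this with $pd\ne0$ forces the ``$q$ side'' of the comparability alternative to yield a nonzero subprojection $r\le p$ with $r\sim$ (a subprojection of $q$). Using completeness to form suprema of the relevant orthogonal families and using that $Q^{\downarrow}\subseteq Q$ (so subprojections of elements of $Q$ stay in $Q$) together with projectivity of $Q$ (so the symmetry-conjugate $srs$ of an element of $Q$ stays in $Q$, i.e. $r\in Q$), we land a nonzero projection $r\in Q\cap P[0,p]$. Then Lemma \ref{lm:orthodense} delivers orthodensity of $Q$ in $P[0,c]$.

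\medskip

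For part (iii), note $\bigvee Q\le c$ by part (i). Conversely, by part (ii) every projection in $P[0,c]$ — in particular $c$ itself — is the supremum of an orthogonal family of projections drawn from $Q$; since $P$ is complete this supremum exists and is at most $\bigvee Q$. Hence $c\le\bigvee Q$, and so $c=\bigvee Q$. Finally, for part (iv), assume $Q$ is TD. By Lemma \ref{lm:typecover}(iv), $c_{Q}$ is the smallest central projection with $Q\subseteq P[0,c_{Q}]$; since part (i) shows $Q\subseteq P[0,c]$ with $c$ central, we get $c_{Q}\le c$. For the reverse, each $q\in Q$ satisfies $q\wedge c_{Q}=q$, hence (using that $\gamma$ preserves meets with central projections, as recorded after Definition \ref{df:centcover}) $\gamma q=\gamma q\wedge c_{Q}\le c_{Q}$; taking the supremum over $q\in Q$ gives $c=\bigvee\{\gamma q:q\in Q\}\le c_{Q}$. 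Thus $c=c_{Q}$, and $c_{Q}=\bigvee\gamma(Q)\in\gamma(Q)$ because by Lemma \ref{lm:typecover}(i) every central subprojection of $c_{Q}$ lies in $\gamma(Q)$ and $c_{Q}$ is its own subprojection. This completes the sketch; the genuine work is the comparability argument in part (ii).
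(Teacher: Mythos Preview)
Your proposal is correct and follows essentially the same route as the paper. The only differences are packaging: for part (ii) the paper, instead of unpacking Generalized Comparability by hand, simply invokes Lemma~\ref{lm:simandcentcov}(vi) (for complete $P$, $p$ and $q$ are unrelated iff $\gamma p\perp\gamma q$), so from $p\wedge\gamma q\ne0$ one gets nonzero $p_1\le p$, $q_1\le q$ with $p_1\sim q_1$ in one line; and for part (iv) the paper uses Lemma~\ref{lm:typecover}(i) (that $c_Q$ is the largest element of $\gamma(Q)$) to conclude $c_Q=\bigvee\gamma(Q)=c$ directly, rather than arguing both inequalities.
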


\begin{proof}
Assume the hypotheses. Part (i) follows from Lemma \ref
{lm:orthodense}. To prove (ii), it will be sufficient to 
show that part (i) of Lemma \ref{lm:orthodense} holds. Thus, 
suppose that $p\in P$, $q\in Q$, and $p\wedge\gamma q\not=0$. 
Since $P$ is complete, \cite[Corollary 7.8]{SymSA} applies, 
whence $p$ and $q$ are related, i.e., there are nonzero 
subprojections $0\not=p\sb{1}\leq p$ and $0\not=q\sb{1}
\leq q$ such that $p\sb{1}\sim q\sb{1}$. As $q\sb{1}\in Q
\sp{\downarrow}\subseteq Q$ and $Q$ is projective, it 
follows that $p\sb{1}\in Q$, and (ii) is proved. By (ii), 
$c$ is the supremum of an orthogonal family in $Q$, by (i), 
$c$ is an upper bound for $Q$, whence (iii) holds. If 
$Q$ is TD, then by Lemma \ref{lm:typecover} (i), 
$c\sb{Q}$ is the largest projection in $\gamma(Q)$, 
whence $c\sb{Q}=\bigvee\gamma(Q)=c$. 
\end{proof}

\begin{corollary} \label{co:orthodense}
Suppose that $P$ is complete, $Q\subseteq P$, $Q$ is projective, 
and $Q$ is STD. Then $c\sb{Q}=\bigvee Q$, $Q\subseteq P[0,c\sb{Q}]$,  
and $Q$ is orthodense in $P[0,c\sb{Q}]$.
\end{corollary}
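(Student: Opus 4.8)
The plan is to derive this directly from Theorem \ref{th:TopProp13}, since the hypotheses here are a special case of the hypotheses there. First I would observe that, because $Q$ is STD, it is in particular TD (as noted just after the definition of TD/STD sets), so the type-cover $c\sb{Q}$ is well-defined via Theorem \ref{th:TDEAth4.5}\,(ii). Moreover, being STD, $Q$ satisfies $Q\sp{\downarrow}\subseteq Q$ (indeed $Q=Q\sp{\downarrow}$). Thus all the standing hypotheses of Theorem \ref{th:TopProp13} are in force: $P$ is complete, $Q\sp{\downarrow}\subseteq Q\subseteq P$, and $Q$ is projective.

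Next I would put $c:=\bigvee\{\gamma q:q\in Q\}$, exactly the projection appearing in Theorem \ref{th:TopProp13}. Applying parts (i), (ii), and (iii) of that theorem gives $Q\subseteq P[0,c]$, $Q$ orthodense in $P[0,c]$, and $c=\bigvee Q$. Since $Q$ is TD, part (iv) of the same theorem yields $c=c\sb{Q}\in\gamma(Q)$. Substituting $c\sb{Q}$ for $c$ in the three conclusions just listed gives precisely $c\sb{Q}=\bigvee Q$, $Q\subseteq P[0,c\sb{Q}]$, and $Q$ orthodense in $P[0,c\sb{Q}]$, which is the assertion.

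Since the corollary is essentially a repackaging of Theorem \ref{th:TopProp13} under the stronger assumption that $Q$ is STD rather than merely an order ideal, there is no genuine obstacle at this level. If one traces where the real work sits, it is inside Theorem \ref{th:TopProp13}: the orthodensity conclusion ultimately rests on completeness of $P$ (through the relatedness criterion \cite[Corollary 7.8]{SymSA}, which converts $p\wedge\gamma q\neq 0$ into the existence of equivalent nonzero subprojections), together with projectivity of $Q$ and the hypothesis $Q\sp{\downarrow}\subseteq Q$, and on Lemma \ref{lm:orthodense}. The only thing to be careful about when writing the corollary's proof is to invoke \emph{both} the STD property of $Q$ (to supply the hypothesis $Q\sp{\downarrow}\subseteq Q$ of Theorem \ref{th:TopProp13}) and the TD property of $Q$ (to identify $c$ with $c\sb{Q}$ via part (iv)); the remaining hypotheses, completeness of $P$ and projectivity of $Q$, are given outright.
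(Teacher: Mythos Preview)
Your proposal is correct and matches the paper's approach: the corollary is stated immediately after Theorem \ref{th:TopProp13} with no separate proof, the understanding being exactly what you wrote---that STD supplies both the order-ideal condition $Q\sp{\downarrow}\subseteq Q$ needed for parts (i)--(iii) and the TD condition needed for part (iv), so that the $c$ of the theorem coincides with $c\sb{Q}$.
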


\begin{theorem} \label{th:gammaq=gammap}
Let $Q\subseteq P$ be TD and let $0\not=p\in P$. Then the 
following two conditions are equivalent{\rm: (i)} There exists 
$0\not=q\in Q\cap P[0,p]$ such that $\gamma q=\gamma p$. 
{\rm (ii)} For all $d\in P\cap C(A)$, if $pd\not=0$, then 
$Q\cap P[0,pd]\not=\{0\}$. 
\end{theorem}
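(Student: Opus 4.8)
The plan is to prove the two implications separately, and the implication (i) $\Rightarrow$ (ii) should be the easy direction. Assume (i): there is $0\not=q\in Q\cap P[0,p]$ with $\gamma q=\gamma p$. Let $d\in P\cap C(A)$ with $pd\not=0$. First I would observe that $qd\leq pd\leq p$ and that $qd=q\wedge d\in Q$ since $Q$ is TD (so $Q^{\gamma}\subseteq Q$). Thus $qd\in Q\cap P[0,pd]$, and it remains to see that $qd\not=0$. If $qd=0$, then $q=q\gamma q\cdot d^{\perp}$ would force $\gamma q\leq d^{\perp}$ (using that $q\leq d^{\perp}$ implies $\gamma q\leq d^{\perp}$ because $d^{\perp}$ is central), hence $\gamma p=\gamma q\leq d^{\perp}$, so $pd=p\gamma p\, d=0$, a contradiction. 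Therefore $qd\not=0$ and (ii) holds.

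For the converse (ii) $\Rightarrow$ (i), the idea is to run an exhaustion argument using central orthocompleteness and then assemble the pieces with $[Q]\subseteq Q$. Working inside $P[0,\gamma p]$, consider the collection of centrally orthogonal families $(q_i)_{i\in I}$ of nonzero projections in $Q$ with each $q_i\leq p$, ordered by inclusion; by Zorn's lemma choose a maximal such family $(q_i)_{i\in I}$. Since $P$ is centrally orthocomplete (Assumption \ref{as:CO}) and $Q$ is TD, the supremum $q:=\bigvee_{i\in I}q_i$ lies in $[Q]\subseteq Q$, and $q\leq p$. I would then argue that $\gamma q=\gamma p$: set $d:=\gamma p\wedge(\gamma q)^{\perp}\in P\cap C(A)$; if $d\not=0$ then $pd\not=0$ (as $d\leq\gamma p$ and $pd=0$ would give $\gamma p\leq d^{\perp}$, contradicting $0\not=d\leq\gamma p$), so by (ii) there is $0\not=q'\in Q\cap P[0,pd]$. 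But $q'\leq pd\leq d\leq(\gamma q)^{\perp}$, and each $\gamma q_i\leq\gamma q$, so $q'$ is centrally orthogonal to every $q_i$ (witnessed by the central projections $\gamma q_i$ and $\gamma q'\leq d\leq(\bigvee_i\gamma q_i)^{\perp}$, using Lemma \ref{lm:simandcentcov} or directly that $\gamma q'\perp\gamma q_i$). This contradicts maximality, so $d=0$, i.e.\ $\gamma p\leq\gamma q$; combined with $q\leq p\Rightarrow\gamma q\leq\gamma p$ we get $\gamma q=\gamma p$. Finally $q\not=0$ because $\gamma q=\gamma p\not=0$ (as $p\not=0$), so (i) holds.

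The main obstacle I anticipate is the bookkeeping in the maximality step: verifying that the newly found $q'$ is genuinely \emph{centrally} orthogonal — not merely orthogonal — to the whole existing family, so that $(q_i)_i\cup\{q'\}$ is again an admissible family. The key point here is that the central covers $\gamma q_i$ are pairwise orthogonal central projections all dominated by $\gamma q$, while $\gamma q'\leq d=\gamma p\wedge(\gamma q)^{\perp}$, hence $\gamma q'$ is orthogonal to $\gamma q$ and therefore to each $\gamma q_i$; by the remark following Definition \ref{df:centcover} (central orthogonality of a family is equivalent to orthogonality of the family of central covers) the enlarged family is centrally orthogonal, contradicting maximality. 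One should also double-check that the trivial family (when $Q\cap P[0,p]$ contains no nonzero projection) is handled: in that case (ii) applied with $d=\gamma p$ already yields a nonzero element of $Q\cap P[0,p]$, so the Zorn set is nonempty.

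Note that this argument uses only that $P$ is centrally orthocomplete and that $Q$ is TD; completeness of $P$ is not needed, which fits the fact that the statement of Theorem \ref{th:gammaq=gammap} does not assume it.
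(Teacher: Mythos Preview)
Your proposal is correct and follows essentially the same route as the paper: for (i)$\Rightarrow$(ii) you take $qd\in Q^{\gamma}\subseteq Q$ and show it is nonzero, and for (ii)$\Rightarrow$(i) you run a Zorn/maximality argument on centrally orthogonal families in $Q\cap P[0,p]$, form the supremum $q\in[Q]\subseteq Q$, and derive a contradiction from $d=\gamma p\wedge(\gamma q)^{\perp}\not=0$ by producing a new $q'$ centrally orthogonal to the chosen family. The only cosmetic differences are that the paper verifies $q\not=0$ up front via $d=1$ rather than via $d=\gamma p$, and phrases the nonvanishing of $qd$ through $\gamma(qd)=(\gamma p)d\geq pd\not=0$ instead of your contrapositive; your extra care in checking that the enlarged family is genuinely centrally orthogonal (via orthogonality of central covers) is exactly the point the paper leaves implicit.
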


\begin{proof}
(i) $\Rightarrow$ (ii). Assume (i) and let $d\in P\cap C(A)$ 
with $pd\not=0$.  By (i), there exists $0\not=q\in Q\cap 
P[0,p]$ with $\gamma q=\gamma p$. Put $q\sb{0}:=qd=q\wedge d
\leq p\wedge d=pd$. Then $q\sb{0}\in Q\sp{\gamma}\subseteq Q$, 
and since $d\in P\cap C(A)$, we have $0\not=pd\leq\gamma(pd)=
(\gamma p)d=(\gamma q)d=\gamma(qd)=\gamma q\sb{0}$, whence 
$q\sb{0}\not=0$.

(ii) $\Rightarrow$ (i). Assume (ii), let $(q\sb{i})\sb{i\in I}$ 
be a maximal centrally orthogonal family in $Q\cap P[0,p]$, 
and put $q:=\bigvee\sb{i\in I}q\sb{i}\in[Q]\subseteq Q$. Then 
$q\leq p$, so $\gamma q\leq\gamma p$. Taking $d=1$ in (ii), we 
find that $Q\cap P[0,p]\not=\{0\}$, whence $q\not=0$. If 
$\gamma q=\gamma p$, then we are done, so, we assume that 
$\gamma q<\gamma p$ and this time we put $d=\gamma p-\gamma q
=\gamma p(\gamma q)\sp{\perp}$ in (ii). Then, as $p\leq\gamma p$, 
we have $pd=p(\gamma q)\sp{\perp}$, whence, if $pd=0$, then 
$p\leq\gamma q$, so $\gamma p\leq \gamma q\leq \gamma p$, 
contradicting $\gamma q<\gamma p$. Therefore, $pd\not=0$, and 
it follows from (ii) that there exists $0\not=q\sb{0}\in Q
\cap P[0,pd]$. But then, $q\sb{0}\leq p$ and $q\sb{0}\leq d
\leq(\gamma q)\sp{\perp}\leq(\gamma q\sb{i})\sp{\perp}$ for 
all $i\in I$, contradicting the maximality of $(q\sb{i})
\sb{i\in I}$.
\end{proof}

\begin{corollary} \label{co:cingammaQ}
Let $c\in P\cap C(A)$ and let $Q$ be a TD subset of $P$. Then 
the following two conditions are equivalent{\rm: (i)} $c\in 
\gamma(Q)$. {\rm (ii)} $Q$ has a nonzero intersection with 
every nonzero direct summand of $P[0,c]$. 
\end{corollary}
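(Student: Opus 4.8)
The plan is to reduce Corollary \ref{co:cingammaQ} to Theorem \ref{th:gammaq=gammap} by taking $p := c$. First I would unwind the two conditions under this specialization. Condition (i) of the corollary, $c \in \gamma(Q)$, should be recognized as equivalent to condition (i) of the theorem with $p = c$: indeed, if $c = \gamma q$ for some $q \in Q$, then since $q \leq \gamma q = c$ we have $q \in Q \cap P[0,c]$ with $\gamma q = c = \gamma c$ (using that $\gamma c = c$ for a central projection $c$), and $q \neq 0$ because $\gamma q = c$; conversely, if there is $0 \neq q \in Q \cap P[0,c]$ with $\gamma q = \gamma c = c$, then $c = \gamma q \in \gamma(Q)$.

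Next I would identify condition (ii) of the corollary with condition (ii) of the theorem (again at $p = c$). Theorem \ref{th:gammaq=gammap}(ii) reads: for all $d \in P \cap C(A)$, if $cd \neq 0$ then $Q \cap P[0,cd] \neq \{0\}$. Since $c$ is central, $cd = c \wedge d$ is again a central projection, and $P[0,cd]$ is precisely a direct summand of $P[0,c]$; moreover every nonzero direct summand of $P[0,c]$ has exactly this form, namely $P[0,cd]$ for some $d \in P \cap C(A)$ with $cd \neq 0$, by the description of the center of $P[0,c]$ given in Section \ref{sc:OMLP}(x) (the center of $P[0,c]$ is $\{cd : d \in P \cap C(A)\}$). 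So "$Q$ has a nonzero intersection with every nonzero direct summand of $P[0,c]$" is a verbatim restatement of Theorem \ref{th:gammaq=gammap}(ii) specialized to $p = c$.

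With both translations in hand, the corollary follows immediately from the theorem applied to $p = c$. I would phrase the proof as: assume $P$ is centrally orthocomplete (Assumption \ref{as:CO}), apply Theorem \ref{th:gammaq=gammap} with $0 \neq p := c$, and invoke the two equivalences above. I do not anticipate a genuine obstacle here; the only point requiring a little care is the bookkeeping that the direct summands of $P[0,c]$ are exactly the intervals $P[0,cd]$, $d \in P \cap C(A)$, which is handled by the relative-center description in Section \ref{sc:OMLP}(x) together with the direct-sum discussion following Theorem \ref{th:relativelycentorth}. Here is the proof.

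\begin{proof}
Apply Theorem \ref{th:gammaq=gammap} with $0\not=p:=c$. Since $c\in P\cap C(A)$, the direct summands of $P[0,c]$ are exactly the intervals $P[0,cd]$ with $d\in P\cap C(A)$ (see Section \ref{sc:OMLP}(x) and the remarks on direct-sum decompositions following Theorem \ref{th:relativelycentorth}), and $P[0,cd]\not=\{0\}$ iff $cd\not=0$. Thus condition (ii) of Theorem \ref{th:gammaq=gammap} for $p=c$ says precisely that $Q$ has a nonzero intersection with every nonzero direct summand of $P[0,c]$, i.e., it is condition (ii) of the present corollary.

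It remains to see that condition (i) of Theorem \ref{th:gammaq=gammap} for $p=c$ is equivalent to $c\in\gamma(Q)$. If there is $0\not=q\in Q\cap P[0,c]$ with $\gamma q=\gamma c=c$, then $c=\gamma q\in\gamma(Q)$. Conversely, if $c\in\gamma(Q)$, choose $q\in Q$ with $\gamma q=c$; then $q\leq\gamma q=c$, so $q\in Q\cap P[0,c]$, and $q\not=0$ since $\gamma q=c$ (recall $c\not=0$ is implicit, as $c=\gamma q$ and we may assume $c\not=0$; if $c=0$ both conditions hold trivially). Hence $\gamma q=c=\gamma c$, which is condition (i). The equivalence of (i) and (ii) now follows from Theorem \ref{th:gammaq=gammap}.
\end{proof}
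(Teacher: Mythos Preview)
Your proof is correct and follows essentially the same approach as the paper: both dispatch the trivial case $c=0$ and then apply Theorem \ref{th:gammaq=gammap} with $p:=c$, identifying conditions (i) and (ii) there with conditions (i) and (ii) of the corollary. You spell out the two identifications in more detail than the paper does (which simply asserts them), but the argument is the same.
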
 

\begin{proof}
Assume the hypotheses. If $c=0$, then (i) and (ii) are both 
true, so we assume that $c\not=0$ and put $p:=c$ in 
Theorem \ref{th:gammaq=gammap}. Then conditions (i) and (ii) 
in Theorem \ref{th:gammaq=gammap} are equivalent to conditions 
(i) and (ii) in the corollary.
\end{proof}

\begin{lemma} [{Cf. \cite[Proposition 15]{Top65}}] 
Suppose that $P$ is complete, $Q\subseteq P$, $Q$ is projective, 
$Q$ is STD, $c\in\gamma(Q)$, and $0\not=p\in P[0,c]$. Then  
there exists $0\not=q\in Q\cap P[0,p]$ with $\gamma q=\gamma p$.
\end{lemma}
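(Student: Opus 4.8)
The plan is to reduce the assertion to the characterization of ``$\gamma q = \gamma p$ for some nonzero $q \in Q \cap P[0,p]$'' supplied by Theorem~\ref{th:gammaq=gammap}, and then to verify the hypothesis of that theorem by invoking the orthodensity of $Q$ coming from Corollary~\ref{co:orthodense}. Concretely, since $Q$ is STD it is TD, so Theorem~\ref{th:gammaq=gammap} applies to $Q$ and to the nonzero projection $p$; it therefore suffices to establish condition (ii) of that theorem, namely that for every central projection $d \in P \cap C(A)$ with $pd \neq 0$ we have $Q \cap P[0,pd] \neq \{0\}$.

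\textbf{Key steps.} First I would locate $pd$ inside the interval $P[0,c\sb{Q}]$: since $c \in \gamma(Q)$ and, by Lemma~\ref{lm:typecover}(i), $c\sb{Q}$ is the largest projection in $\gamma(Q)$, we have $c \leq c\sb{Q}$, hence $pd \leq p \leq c \leq c\sb{Q}$. Next, because $P$ is complete and $Q$ is projective and STD, Corollary~\ref{co:orthodense} gives $c\sb{Q} = \bigvee Q$ and, crucially, that $Q$ is orthodense in $P[0,c\sb{Q}]$. Applying orthodensity to $pd \in P[0,c\sb{Q}]$ expresses $pd$ as the supremum of an orthogonal family of projections drawn from $Q$; since $pd \neq 0$, at least one member of this family is a nonzero projection lying below $pd$, so $Q \cap P[0,pd] \neq \{0\}$. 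This is exactly condition (ii) of Theorem~\ref{th:gammaq=gammap}, and condition (i) of that theorem then delivers the required $0 \neq q \in Q \cap P[0,p]$ with $\gamma q = \gamma p$. The remaining bookkeeping with central covers is routine.

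\textbf{Main obstacle.} The step that needs the right input is the passage to orthodensity. At first sight $c \in \gamma(Q)$ only guarantees, via Corollary~\ref{co:cingammaQ}, that $Q$ meets every nonzero direct summand of $P[0,c]$, and this is too coarse to produce a nonzero element of $Q$ beneath the possibly ``small'' projection $pd$ (which need not be a direct summand). The genuinely stronger fact is that $Q$ is orthodense in $P[0,c\sb{Q}]$, and it is the combination of this with the reformulation in Theorem~\ref{th:gammaq=gammap} that bridges the gap; I expect identifying this combination, rather than any calculation, to be the crux.
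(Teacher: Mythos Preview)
Your proposal is correct and follows essentially the same route as the paper: reduce to condition (ii) of Theorem~\ref{th:gammaq=gammap}, use $c\in\gamma(Q)$ to get $c\leq c\sb{Q}$ (the paper cites Theorem~\ref{th:TDEAth4.5}(ii) rather than Lemma~\ref{lm:typecover}(i), but the content is the same), and then invoke the orthodensity of $Q$ in $P[0,c\sb{Q}]$ from Corollary~\ref{co:orthodense} to find a nonzero element of $Q$ below $pd$.
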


\begin{proof}
Assume the hypotheses and suppose that $d\in P\cap C(A)$ with 
$pd\not=0$. By Theorem \ref{th:gammaq=gammap}, it will be 
sufficient to prove that $Q\cap P[0,pd]\not=\{0\}$. But, by 
Theorem \ref{th:TDEAth4.5} (ii), $c\leq c\sb{Q}$, whence, if 
$0\not=pd\in P[0,c]$, then $0\not=pd\in P[0,c\sb{Q}]$, and it 
follows from Corollary \ref{co:orthodense} that $pd$ is the 
supremum of an orthogonal family in $Q$. Therefore, since $pd
\not=0$, it follows that $Q\cap P[0,pd]\not=\{0\}$. 
\end{proof}

\section{Abelian, modular, locally modular, and\newline  
complete projections} \label{sc:Abelianetc}

In this section we study some important examples of 
TD and STD subsets of $P$. Many of the results in this 
section are generalizations to a synaptic algebra of 
results due to D. Topping for JW-algebras \cite{Top65}. 
Often the proofs of these results are more or less the 
same as Topping's proofs, but we include these proofs here 
in the interest of a more coherent account. \emph{The 
assumption that $P$ is centrally orthocomplete is 
still in force.}

\begin{definition} \label{df:abelianetc}
Let $p\in P$.
\begin{enumerate}
\item [(1)] $p$ is \emph{abelian} (also called \emph{boolean} 
 \cite[p. 1551]{FPType} iff $P[0,p]$ is a boolean algebra. 
 (We shall regard  $P[0,0]=\{0\}$ as a ``degenerate" boolean 
 algebra, hence $0$ is an abelian projection in $A$.) We 
 denote the set of all abelian projections in $P$ by $B$.
\item [(2)] $p$ is \emph{modular} iff $P[0,p]$ is a modular 
 OML. We denote the set of all modular projections in $P$ 
 by $M$.
\item [(3)] $p$ is \emph{locally modular} iff $P[0,p]$ is 
 a locally modular OML. We denote the set of all locally modular 
 projections in $P$ by $M\sb{0}$.
\item [(4)] $p$ is \emph{complete} iff $P[0,p]$ is a complete 
 OML. We denote the set of all complete projections in $P$ 
 by $T$. 
\end{enumerate}
\end{definition}

\begin{theorem} \label{th:BMTSTD}
{\rm(i)} $B\subseteq M\subseteq M\sb{0}$. {\rm(ii)} The sets 
$B$, $M$, and $T$ are projective STD sets. {\rm(iii)} $M\sb{0}$ 
is a projective TD set. {\rm(iv)}  If $c\in P\cap C(A)$, then 
$c\in M\sb{0}\Leftrightarrow c\in\gamma(M)$.
\end{theorem}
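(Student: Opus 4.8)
The plan is to establish each of (i)--(iv) in turn, relying heavily on the machinery of OML type classes (Definition \ref{df:OMLtypeclass}) and Theorem \ref{th:classtoset}, together with the interval-relativization facts recorded in Section \ref{sc:OMLs}.

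For (i), I would observe that a boolean algebra is trivially a modular OML, so $B\subseteq M$. For $M\subseteq M\sb0$, suppose $p\in M$, i.e., $P[0,p]$ is modular; then for any nonzero central $c$ of $P[0,p]$, the interval $(P[0,p])[0,c]=P[0,pc]$ is again modular, so taking $c$ itself (which is a nonzero subelement of $c$) witnesses local modularity. Hence $p\in M\sb0$.

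For (ii) and (iii), the idea is to exhibit the relevant classes of OMLs as (strong) OML type classes and invoke Theorem \ref{th:classtoset}. The class of all boolean algebras is a strong OML type class (intervals, products, and isomorphs of boolean algebras are boolean), and $B=\{q\in P:P[0,q]\text{ is boolean}\}$, so $B$ is a projective STD set. Likewise the class of all modular OMLs is a strong OML type class: an interval of a modular OML is modular, a product of modular OMLs is modular, and modularity is an isomorphism invariant; since $M=\{q\in P:P[0,q]\text{ is modular}\}$, Theorem \ref{th:classtoset} gives that $M$ is a projective STD set. The same argument with the strong OML type class of all complete OMLs (here I use the fact, recalled in Section \ref{sc:OMLs}, that an interval $L[0,p]$ of a complete OML is complete, and that a product of complete OMLs is complete) shows $T$ is a projective STD set. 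For $M\sb0$, the class of locally modular OMLs is an OML type class but not a strong one (as remarked after Definition \ref{df:OMLtypeclass}); it is closed under passing to intervals over \emph{central} elements, under arbitrary cartesian products, and under isomorphism. Since $M\sb0=\{q\in P:P[0,q]\text{ is locally modular}\}$, Theorem \ref{th:classtoset} yields that $M\sb0$ is a projective TD set. The routine verifications that these classes satisfy conditions (1)/(1$'$), (2), (3) of Definition \ref{df:OMLtypeclass} I would state but not belabor; the only one requiring a moment's care is closure of locally modular OMLs under central intervals and products.

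For (iv), let $c\in P\cap C(A)$. If $c\in\gamma(M)$ then by Lemma \ref{lm:typecover} (i) applied to the TD set $M$ (using $M\subseteq M\sb0$ from (i) when convenient), $c$ is a subprojection of $c\sb{M\sb0}=\bigvee\gamma(M\sb0)$... more directly: $c\in\gamma(M)$ means $c=\gamma m$ for some $m\in M$, so by Lemma \ref{lm:relfaith}, $m$ is faithful in $P[0,c]=cA$'s projection lattice, and $m$ modular exhibits $cA$ as having a faithful modular projection, which forces $P[0,c]$ to be locally modular — indeed given any nonzero central $d\in(P\cap C(A))[0,c]$, faithfulness gives $md\not=0$ with $md\in M$ (as $M$ is TD, closed under $Q\sp\gamma$) and $md\leq d$, so $md$ is a nonzero modular subprojection below $d$, witnessing local modularity of $P[0,c]$; hence $c\in M\sb0$. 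Conversely, if $c\in M\sb0$, then $P[0,c]$ is locally modular, so applying local modularity to the nonzero central element $c$ of $P[0,c]$ produces a nonzero modular $p\leq c$; then using that $M\sb0$ is TD with type-cover $c\sb{M\sb0}$, and iterating (or using a maximal centrally orthogonal family in $M\cap P[0,c]$ together with Theorem \ref{th:gammaq=gammap}), one builds a modular projection $m\leq c$ with $\gamma m=c$, giving $c\in\gamma(M)$. The main obstacle I anticipate is precisely this converse direction of (iv): turning "locally modular" (which only guarantees \emph{some} nonzero modular piece under every central projection) into a \emph{single} modular projection with central cover exactly $c$. The clean way is to invoke Theorem \ref{th:gammaq=gammap} with $Q=M$ and $p=c$: local modularity of $P[0,c]$ gives, for each nonzero central $d\leq c$, a nonzero modular subprojection of $d$ (hence of $pd=d$), i.e., condition (ii) of that theorem holds, so condition (i) delivers $0\not=m\in M\cap P[0,c]$ with $\gamma m=\gamma c=c$, whence $c\in\gamma(M)$. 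This also makes (i)'s inclusion $M\subseteq M\sb0$ and Theorem \ref{th:gammaq=gammap} the real engine of part (iv).
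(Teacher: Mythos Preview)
Your proposal is correct and follows essentially the same route as the paper. For (i)--(iii) you invoke Theorem \ref{th:classtoset} on the relevant (strong) OML type classes exactly as the paper does; for (iv) the paper cites Corollary \ref{co:cingammaQ} (with $Q=M$) directly, whereas you unpack that corollary via its parent Theorem \ref{th:gammaq=gammap} and Lemma \ref{lm:relfaith}, which amounts to the same argument written out in full.
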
 

\begin{proof}
Part (i) is obvious. Since the class ${\mathcal B}$ of all 
boolean OMLs, the class ${\mathcal M}$ of all modular OMLs, 
and the class ${\mathcal T}$ of all complete OMLs are strong 
OML type classes and the class ${\mathcal M}\sb{0}$ of all 
locally modular OMLs is an OML type class, (ii) and (iii)  
follow from Theorem \ref{th:classtoset}, and part (iv) 
follows from Corollary \ref{co:cingammaQ}. 
\end{proof}

\begin{lemma}  \label{lm:AbelianindA}
Let $p,q\in P$. Then{\rm: (i)} $p\in B$ iff $pAp$ is a 
commutative synaptic algebra. {\rm: (ii)} If $p\in B$, then 
$p\wedge q$ is an abelian projection in the synaptic algebra 
$qAq$. {\rm (iii)} If $p\in P[0,q]$, then $p\in B$ iff $p$ 
is an abelian projection in the synaptic algebra $qAq$.
\end{lemma}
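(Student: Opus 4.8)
\textbf{Proof plan for Lemma~\ref{lm:AbelianindA}.}
The plan is to exploit the identification of $pAp$ as a synaptic algebra with unit $p$ (listed among the basic properties in Section~\ref{sc:BPSA}) and the fact that its projection lattice is exactly $P[0,p]$ (property (xii) in Section~\ref{sc:OMLP}), together with the ``commutative iff self-centralizing'' characterization recorded in Section~\ref{sc:BPSA}, namely that a synaptic algebra is commutative iff it equals its own center. For (i), I would argue both directions through the structure of $P[0,p]$. If $p\in B$, then $P[0,p]$ is boolean, so every pair of projections in $pAp$ commutes; since the center of a synaptic algebra is $C$ of its projection lattice (property (vi) applied inside $pAp$), and a boolean projection lattice has itself as center, every projection in $pAp$ is central, and because a synaptic algebra is the closed span of its projections (spectral-type property from the cited synaptic-algebra theory) it follows that $pAp=C(pAp)$, i.e.\ $pAp$ is commutative. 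Conversely, if $pAp$ is commutative, then any two projections $q,r\in P[0,p]$ commute, so by properties~(iii) and~(v) of Section~\ref{sc:OMLP} they are compatible and $q\wedge r=qr$; a compatible (hence distributive) orthocomplemented lattice in which all pairs are compatible is boolean, so $p\in B$.

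For (iii), note that if $p\leq q$ then $P[0,p]=(P[0,q])[0,p]$ is both the $p$-interval of $P$ and the $p$-interval of the projection lattice of $qAq$; since ``abelian'' is defined purely in terms of this interval being boolean, the two notions of abelianness coincide verbatim, with essentially nothing to prove beyond quoting property~(xii) twice (once for $A$, once for $qAq$). For (ii), the key step is to pass from $p\in B$ to $p\wedge q\leq q$ and then invoke (iii): I would show $P[0,p\wedge q]$ is boolean as a sublattice of $P$. Indeed $P[0,p\wedge q]=P[0,p]\cap P[0,q]$, and since $P[0,p]$ is boolean, its sublattice $P[0,p\wedge q]$ (it is a sublattice closed under the orthocomplementation $r\mapsto (p\wedge q)\wedge r^{\perp}$ because $P[0,p\wedge q]$ is itself a $p$-interval of the boolean algebra $P[0,p]$) is again boolean. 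Hence $p\wedge q\in B$, and then (iii), applied with $p$ replaced by $p\wedge q\leq q$, gives that $p\wedge q$ is an abelian projection in $qAq$.

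The main obstacle, such as it is, is purely bookkeeping: one must be careful that ``boolean algebra'' and ``abelian projection'' are interpreted consistently when moving between $P$, $P[0,q]$, and the intrinsic projection lattice of $qAq$, and that the degenerate case $p\wedge q=0$ (allowed as an abelian projection by the convention in Definition~\ref{df:abelianetc}) is not accidentally excluded. No deep new computation is needed; everything reduces to the already-recorded facts that $pAp$ is a synaptic algebra whose projections are $P[0,p]$, that commutativity of a synaptic algebra is detected by its projections via properties (iii), (v), (vi), and that an interval of a boolean algebra is boolean.
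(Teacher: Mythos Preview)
Your proposal is correct and follows essentially the same route as the paper. The only differences are cosmetic: for (i) the paper simply invokes \cite[Theorem 4.5]{SymSA} (commutativity of $pAp$ is equivalent to booleanness of $P[0,p]$) rather than sketching the argument via $C(pAp)=C(P[0,p])$ and spectral density of projections as you do; and for (ii)--(iii) the paper proves (ii) first and then derives the nontrivial direction of (iii) from it, whereas you prove (iii) first and derive (ii) from it---but the underlying observation, namely that $(P[0,q])[0,p\wedge q]=P[0,p\wedge q]$ sits inside the boolean $P[0,p]$ and is therefore boolean, is identical in both.
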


\begin{proof}
(i) By \cite[Theorem 4.5]{SymSA}, $pAp$ is a commutative synaptic 
algebra iff its lattice of projections $P[0,p]$ is a boolean 
algebra.  

(ii) Suppose $p\in B$, i.e., $P[0,p]$ is boolean. Then 
$q\wedge p\in P[0,q]$, which is the lattice of projections 
in $qAq$, and $(P[0,q])[0,q\wedge p]=P[0,q\wedge p]$. But,
since $P[0,p]$ is boolean, so is the sublattice $P[0,q\wedge p]
\subseteq P[0,p]$, whence $q\wedge p$ is abelian in $qAq$.  

(iii) If $p$ is abelian in $qAq$, then $(P[0,q])[0,p]=P[0,p]$ 
is boolean, whence $p\in B$. Conversely, suppose that $p
\in P[0,q]$, i.e., $p\leq q$. Then, if $p\in B$, it follows from 
(ii) that $p=p\wedge q$ is abelian in $qAq$. 
\end{proof}

\begin{theorem} [{Cf. \cite[Theorem 11]{Top65}}] \label{th:modfin}
Let $p\in P$ and consider the following two conditions{\rm: (i)} 
Every orthogonal family of nonzero projections in $P[0,p]$, any 
two of which are exchanged by a symmetry in $A$, is necessarily 
finite. {\rm(ii)} $p\in M$. Then {\rm(i)} $\Rightarrow$ {\rm(ii)}, 
and if $p\in T$, then {\rm(ii)} $\Rightarrow$ {\rm(i)}.
\end{theorem}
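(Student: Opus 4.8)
The plan is to mimic Topping's argument for JW-algebras, using the machinery on symmetries and modularity assembled above. For the implication (i) $\Rightarrow$ (ii), I argue contrapositively: suppose $p\notin M$, i.e.\ $P[0,p]$ is not modular. By Theorem \ref{th:modularcondition}, non-modularity of $P[0,p]$ means there are projections $f_{0},g_{0}\in P[0,p]$ with $f_{0}\leq g_{0}$, $f_{0}$ perspective to $g_{0}$ in $P[0,p]$, yet $f_{0}\neq g_{0}$. Set $f:=g_{0}-f_{0}\neq 0$; perspectivity of $f_{0}$ and $g_{0}$ should translate, after passing to the interval $P[0,g_{0}]$ and using Corollary \ref{co:tspst} together with Lemma \ref{lm:rAr}, into the existence of symmetries $s,t\in A$ with $tsfst<f$ (the strict inequality coming from $f_{0}\neq g_{0}$). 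Now Lemma \ref{lm:tsfst<f} applies verbatim: it produces an infinite orthogonal sequence $e_{1},e_{2},e_{3},\ldots$ of \emph{nonzero} projections in $P[0,f]\subseteq P[0,p]$, any two of which are exchanged by a symmetry in $A$. That is exactly the negation of (i), completing the contrapositive.

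For the converse under the extra hypothesis $p\in T$: assume $p\in M\cap T$, so $P[0,p]$ is both complete and modular, and suppose for contradiction that there is an infinite orthogonal family $(e_{i})_{i\in\Nat}$ of nonzero projections in $P[0,p]$, any two of which are exchanged by a symmetry in $A$. Being exchanged by a symmetry implies (Theorem \ref{th:tspst}(i),(ii)) that $e_{i}$ and $e_{j}$ are perspective in $P$, hence — since perspectivity forces projectivity — $e_{i}$ and $e_{j}$ are projective in $P$. By Lemma \ref{lm:rAr}, $e_i$ and $e_j$ are exchanged by a symmetry in $pAp$, so the same reasoning inside the complete modular OML $P[0,p]$ shows $e_i$ and $e_j$ are projective in $P[0,p]$. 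But then Theorem \ref{th:contingeom}(iv), applied to the complete modular OML $L:=P[0,p]$, says any orthogonal family of nonzero elements that are pairwise projective must be finite — contradicting infinitude of $(e_{i})_{i\in\Nat}$. Hence (ii) $\Rightarrow$ (i) when $p\in T$.

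The step I expect to be the main obstacle is the clean translation, in the first half, from ``$f_{0}$ perspective to $g_{0}$ in $P[0,p]$ with $f_{0}\leq g_{0}$, $f_{0}\neq g_{0}$'' into ``$tsfst<f$ for symmetries $s,t\in A$ and $f=g_{0}-f_{0}$.'' One must first relativize the perspectivity to the interval $P[0,g_0]$ so that $f_0$ and $g_0 = f_0 \vee x$ are perspective there for some common complement, then move to the orthogonal picture: $f_{0}$ and its relative complement $g_{0}\wedge f_{0}^{\perp}=f$ inside $P[0,g_{0}]$ are perspective, and since they are orthogonal, Corollary \ref{co:tspst} furnishes symmetries implementing $tf_0'fstf_0' \le \ldots$; one then transports these symmetries from $g_{0}Ag_{0}$ up to $A$ via Lemma \ref{lm:rAr} and checks that the resulting relation is $tsfst\le f$ with strictness forced by $f\neq 0$ together with $f_{0}$ being a proper subelement. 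Handling the bookkeeping of which interval each symmetry lives in, and verifying strictness rather than mere $\leq$, is the delicate part; everything afterward is a direct appeal to Lemma \ref{lm:tsfst<f}.
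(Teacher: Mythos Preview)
Your argument for (ii) $\Rightarrow$ (i) under the hypothesis $p\in T$ is essentially the paper's argument; the paper routes through Corollary \ref{co:tspst} and Lemma \ref{lm:strongpimpliesp}(iii) to land in $P[0,p]$ rather than through Lemma \ref{lm:rAr}, but both paths reach Theorem \ref{th:contingeom}(iv) and are equally valid.

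The gap is in your (i) $\Rightarrow$ (ii). You introduce $f:=g_{0}-f_{0}$ and then try to manufacture symmetries $s,t$ with $tsfst<f$ by arguing that $f_{0}$ and its relative orthocomplement $g_{0}-f_{0}$ are perspective inside $P[0,g_{0}]$. That claim is not established and is not true in general: orthocomplements in an OML need not be perspective, and nothing in the hypothesis ``$f_{0}$ perspective to $g_{0}$ in $P[0,p]$'' forces $f_{0}$ to be perspective to $g_{0}-f_{0}$. Your own final paragraph concedes the bookkeeping here is murky, and indeed it does not go through as written.

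The detour is unnecessary. Take $f:=g_{0}$ itself, not the difference. From $f_{0}$ perspective to $g_{0}$ in $P[0,p]$, Lemma \ref{lm:strongpimpliesp}(i) gives $f_{0}$ perspective to $g_{0}$ in $P$; then Theorem \ref{th:tspst}(iii) supplies symmetries $s,t\in A$ with $st\,f_{0}\,ts=g_{0}$, whence $ts\,g_{0}\,st=f_{0}<g_{0}$, i.e.\ $tsfst<f$ with $f=g_{0}$. Lemma \ref{lm:tsfst<f} now applies directly and produces the infinite orthogonal sequence inside $P[0,g_{0}]\subseteq P[0,p]$. No passage to $P[0,g_{0}]$, no Corollary \ref{co:tspst}, and no relative-complement gymnastics are needed.
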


\begin{proof} To prove that (i) $\Rightarrow$ (ii), it will be 
sufficient to show that if (ii) fails, then (i) fails. So assume 
that $P[0,p]$ is not modular. Then by Theorem \ref
{th:modularcondition}, there exist projections $e,f\in P[0,p]$ 
such that $e<f$ and $e$ is perspective to $f$ in $P[0,p]$. Thus 
by Lemma \ref{lm:strongpimpliesp} (i), $e$ is perspective to $f$ 
in $P$, whence by Theorem \ref{th:tspst} (iii), there are symmetries 
$s,t\in A$ such that $stets=f$. Therefore, $tsfst=e<f$, and by 
Lemma \ref{lm:tsfst<f} (ii), (i) fails. 

Conversely, assume that $P[0,p]$ is complete, that (ii) holds, 
and that $(e\sb{i})\sb{i\in I}$ is an orthogonal family of 
nonzero projections in $P[0,p]$ any two of which are exchanged 
by a symmetry in $A$. Therefore, by Corollary \ref{co:tspst}, 
any two projections in $(e\sb{i})\sb{i\in I}$ are strongly 
perspective in $P$, whence by Lemma \ref{lm:strongpimpliesp}, 
they are strongly perspective, hence perspective, in $P[0,p]$. 
Since the OML $P[0,p]$ is modular and complete, it follows 
from Theorem \ref{th:contingeom} (iv) that $(e\sb{i})\sb{i\in I}$ 
is finite.  
\end{proof}

\begin{lemma} [{Cf. \cite[Lemma 23]{Top65}}] \label{le:1nonmod} 
Suppose that $p\in T$, but $p\notin M$. Then there is a 
projection $e\in P[0,p]$ with the following properties{\rm: 
(i)} $e$ is the supremum of an infinite sequence of nonzero 
projections in $P[0,p]$ any two of which are exchanged by a 
symmetry in $A$. {\rm (ii)} There is a symmetry $s\in A$ with 
$ses\in P[0,p]$ and $ses\perp e$.
\end{lemma}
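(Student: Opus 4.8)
The plan is to leverage the machinery already assembled, chiefly Theorem \ref{th:modfin} and Lemma \ref{lm:tsfst<f}, to extract the desired projection $e$ together with the extra orthogonal copy $ses$. Since $p\in T$ means $P[0,p]$ is complete, and $p\notin M$ means $P[0,p]$ is not modular, Theorem \ref{th:modfin} tells us that condition (i) of that theorem fails: there exists an \emph{infinite} orthogonal family of nonzero projections in $P[0,p]$, any two of which are exchanged by a symmetry in $A$. Passing to a countable subfamily and re-indexing, I get an infinite orthogonal sequence $e_1, e_2, e_3, \dots$ of nonzero projections in $P[0,p]$, any two of which are exchanged by a symmetry in $A$.

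Next I would construct $e$ and the ``shifted'' witness. The natural move mirrors Topping's Lemma 23: set $e := \bigvee_{n\in\Nat} e_{2n-1}$ (the supremum of the odd-indexed terms, which exists in $P[0,p]$ by completeness), so immediately (i) holds, since $e$ is the supremum of the infinite orthogonal sequence $e_1, e_3, e_5, \dots$, any two of which are exchanged by a symmetry in $A$. For (ii), I want a symmetry $s\in A$ such that $ses$ is contained in $P[0,p]$, is orthogonal to $e$, and — implicitly — is a ``copy'' of $e$. The idea is to build $s$ as a symmetry that shifts the sequence by one position, i.e.\ exchanges $e_{2n-1}$ with $e_{2n}$ for every $n$; then $ses = \bigvee_{n} e_{2n}$ lies under $\bigvee_n (e_{2n-1}\vee e_{2n}) \le p$, and $ses \perp e$ because the even-indexed and odd-indexed terms are mutually orthogonal. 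To get such an $s$ I would invoke Lemma \ref{lm:tsfst<f}: indeed, that lemma's conclusion (ii) produces exactly such a configuration — an orthogonal sequence $e_1 > e_2 > \cdots$ wait, rather, the sequence $e_n = f_n - f_{n+1}$ with $tse_n st = e_{n+1}$ for all $n$. So the cleaner route is to go back one step and apply Lemma \ref{lm:tsfst<f} directly.

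Concretely: because $P[0,p]$ is complete and not modular, Theorem \ref{th:modularcondition} gives $f', e'\in P[0,p]$ with $e' < f'$ and $e'$ perspective to $f'$ in $P[0,p]$, hence perspective in $P$ by Lemma \ref{lm:strongpimpliesp}(i), hence by Theorem \ref{th:tspst}(iii) there are symmetries $s_0, t_0\in A$ with $t_0 s_0 e' s_0 t_0 = f'$; rewriting, $t s f' s t = e' < f'$ where I relabel $s := t_0$, $t := s_0$ (being careful with the order so that Lemma \ref{lm:tsfst<f}'s hypothesis $tsf'st < f'$ is met). Now Lemma \ref{lm:tsfst<f} furnishes the decreasing sequence $f' = f_1 > f_2 > \cdots$ in $P[0,f'] \subseteq P[0,p]$ and the orthogonal sequence $e_n := f_n - f_{n+1} > 0$ in $P[0,p]$ with $ts\,e_n\,st = e_{n+1}$ for all $n\in\Nat$, and with any two $e_i, e_j$ exchanged by a symmetry in $A$. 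Set $w := ts$ (a product of two symmetries, with $w^{-1} = w^{\ast} = st$), so $w e_n w^{\ast} = e_{n+1}$. Define $e := \bigvee_{n\in\Nat} e_{2n-1}$, which exists by completeness of $P[0,p]$; then (i) holds. For (ii), note $w e w^{\ast} = \bigvee_n w e_{2n-1} w^{\ast} = \bigvee_n e_{2n}$ by Lemma \ref{lam:xpxstar}(ii) (OML-automorphisms preserve suprema), and this is orthogonal to $e$ and lies in $P[0,p]$. The one remaining wrinkle is that $w = ts$ is a product of \emph{two} symmetries rather than a single symmetry $s$, but the conjugation $a \mapsto wa w^{\ast}$ is still a Jordan/OML automorphism; I would then apply Corollary \ref{co:tspst} to the orthogonal pair $e$ and $wew^{\ast}$ (they are orthogonal, and they are exchanged by $w$, which is a product of symmetries, so the equivalence $e \sim wew^\ast$ holds, and by Corollary \ref{co:tspst} since they are orthogonal they are in fact exchanged by a \emph{single} symmetry $s\in A$). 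With that single $s$, we have $ses = wew^{\ast} \in P[0,p]$ and $ses \perp e$, giving (ii).

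The main obstacle I anticipate is bookkeeping rather than depth: getting the roles of $s$ and $t$ (and the order of composition) straight so that Lemma \ref{lm:tsfst<f} applies with the strict inequality $tsf'st < f'$ in the right direction, and then making the ``shift by one'' argument rigorous — in particular justifying that $\bigvee_n e_{2n}$ and $\bigvee_n e_{2n-1}$ are orthogonal (each even term is orthogonal to every odd term since the whole sequence is orthogonal, and orthogonality is preserved under suprema in the effect-algebra sense, cf.\ Remark \ref{rm:PasanEA}) and that $w(\bigvee_n e_{2n-1})w^\ast = \bigvee_n e_{2n}$ (Lemma \ref{lam:xpxstar}(ii)). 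The final reduction from ``exchanged by a product of symmetries'' to ``exchanged by a single symmetry,'' for the orthogonal pair $e$ and $ses$, is handled cleanly by Corollary \ref{co:tspst}, so no genuinely new idea is needed beyond what is already in the excerpt.
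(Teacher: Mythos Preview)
Your argument is correct, and the overall strategy---split the orthogonal sequence into odd- and even-indexed subsequences and show their suprema are exchanged by a single symmetry---matches the paper. The route you take for part (ii), however, differs from the paper's. The paper simply cites Theorem \ref{th:modfin} to obtain the infinite orthogonal sequence (without any uniform shift), passes to $pAp$, and then invokes \cite[Theorem 5.15]{SymSA} (a weak additivity theorem for exchangeability by symmetries) to produce a single symmetry in $pAp$ exchanging $e=\bigvee e_{2n}$ with $f=\bigvee e_{2n-1}$; Lemma \ref{lm:rAr} then lifts this to a symmetry in $A$. You instead go back to the construction in Lemma \ref{lm:tsfst<f} to obtain the sequence together with the uniform shift $w=ts$ satisfying $we_n w^{\ast}=e_{n+1}$, push $w$ through the supremum via Lemma \ref{lam:xpxstar}(ii), and then reduce from a product of two symmetries to a single symmetry using Corollary \ref{co:tspst} (legitimate because $e\perp wew^{\ast}$). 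Your approach has the virtue of staying entirely within results stated in the present paper and not appealing to the external additivity theorem; the paper's approach is shorter because it does not need to re-enter the proof of Theorem \ref{th:modfin} to extract the shift structure. One small remark: the identity $tse_n st=e_{n+1}$ that you use is established in the \emph{proof} of Lemma \ref{lm:tsfst<f} rather than in its statement, so in a polished write-up you should either cite the proof or recompute it from $f_{n+1}=tsf_n st$.
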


\begin{proof}
Assume the hypotheses. Then by Theorem \ref{th:modfin}, there is 
an infinite sequence $e\sb{1}, e\sb{2}, e\sb{3},...$ of nonzero 
projections in $P[0,p]$, any two of which are exchanged by a 
symmetry in $A$, whence also by a symmetry in $pAp$ (Lemma 
\ref{lm:rAr}). Putting $e:=\bigvee\sb{n=1}\sp{\infty}e\sb{2n}$, 
we have (i). To prove (ii), we work in the synaptic algebra 
$pAp$ and its complete OML $P[0,p]$ of projections. Let $f:=
\bigvee\sb{n=1}\sp{\infty}e\sb{2n-1}$. Then $e\perp f$, whence 
by \cite[Theorem 5.15]{SymSA} (a weak form of additivity for 
exchangeability by symmetries), there is a symmetry $t\in pAp$ 
such that $tet=f$. By Lemma \ref{lm:rAr} again, there is a 
symmetry $s\in A$ with $ses=f$, and (ii) is proved.
\end{proof}

\begin{theorem} [{Cf. \cite[Theorem 12]{Top65}}] \label{th:supmod} 
{\rm(i)} If $p,q\in M$ and $p\vee q\in T$, then $p\vee q\in M$. 
{\rm(ii)} If $P$ is complete, then $M$ is both a projective 
STD set and a $p$-ideal in $P$.
\end{theorem}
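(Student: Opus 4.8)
\textbf{Proof proposal for Theorem \ref{th:supmod}.}

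The plan is to reduce both parts to facts about modular projections already in hand, using equivalence/perspectivity of the ``overhang'' projection $q-(p\wedge q)$ together with the local finiteness criterion of Theorem \ref{th:modfin}. For part (i), I would first replace the ambient algebra by $(p\vee q)A(p\vee q)$, so that $r:=p\vee q$ becomes the unit; by Lemma \ref{lm:AbelianindA}-type relativization (more precisely, since $P[0,r]$ is the projection lattice of $rAr$ and $P[0,p]=(P[0,r])[0,p]$, modularity of $P[0,p]$ is unaffected), it suffices to prove: if $r\in T$ and $p,q\in P[0,r]$ are modular with $p\vee q=r$, then $r$ is modular. By Theorem \ref{th:modfin}, since $r\in T$, it is enough to show every orthogonal family of nonzero projections in $P[0,r]$, pairwise exchanged by symmetries in $A$, is finite. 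The key idea is that such a family, when intersected appropriately, must be finite ``over $p$'' and finite ``over $q$.'' Concretely, given an orthogonal family $(e_i)_{i\in I}$ in $P[0,r]$ with all $e_i\ne 0$ and any two exchanged by a symmetry, I would use the Kaplansky/parallelogram machinery: each $e_i$ is perspective (inside $P[0,r]$, which is complete) to a subprojection of $p$ or is ``detected'' by $p$; more carefully, $\gamma(\text{any }e_i)$ is a fixed central projection $c_0$ (Lemma \ref{lm:simandcentcov}(i)), and since $p\vee q=r\ge e_i$ with $\gamma e_i=c_0$, the projection $p$ cannot be orthogonal to $e_i c_0$ for all $i$ simultaneously in a way that allows infinitely many disjoint pieces — here I expect to invoke that $p$ is modular hence (being $\le r$, and $P[0,r]$ complete once we also know $r\in T$) finite, so only finitely many $e_i$ can be ``supported near $p$,'' and symmetrically for $q$; since $p\vee q=r$ covers everything, the family is finite.

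\textbf{Main obstacle.} The delicate point is making the last sentence rigorous: there is no a priori comparability theorem available unless $P$ is complete, and in part (i) we only assume $p\vee q\in T$, i.e. $P[0,p\vee q]$ is complete. So the right move is to work entirely inside $rAr$ where $r=p\vee q$: then $P[0,r]$ \emph{is} a complete OML, Theorem \ref{th:vargencomp} (Generalized Comparability) and Lemma \ref{lm:simandcentcov}(iv),(vi) all apply within $rAr$, and $p,q$ are modular projections of $rAr$ by the interval identity $(P[0,r])[0,p]=P[0,p]$. Inside this complete lattice I would argue: given an infinite orthogonal family $(e_i)$ in $P[0,r]$ pairwise exchanged by symmetries of $rAr$ (equivalently of $A$, by Lemma \ref{lm:rAr}), split by the central cover — on the central summand where the $e_i$'s are ``related to $p$'' use Theorem \ref{th:modfin} applied to $p$ (a complete modular projection, since $p\le r\in T$ gives $p\in T$ by completeness of intervals of a complete OML) to bound that part of the family; on the complementary summand the $e_i$'s are unrelated to $p$, hence (Lemma \ref{lm:simandcentcov}(vi), valid since $P[0,r]$ is complete) $\gamma e_i\perp\gamma p$ there, forcing $e_i\le (\gamma p)^\perp$; but $p\vee q=r$ and $\gamma$ preserves the supremum, so $(\gamma p)^\perp\wedge r\le \gamma q$ on that summand, and we bound the rest of the family using modularity/completeness of $q$ the same way. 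Hence the family is finite, $r\in M$, proving (i).

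\textbf{Part (ii).} Assume $P$ is complete. Then $M$ is a projective STD set by Theorem \ref{th:BMTSTD}(ii), so only the $p$-ideal property needs proof, and by Definition (STD already gives the order-ideal property $M^{\downarrow}\subseteq M$ and projectivity), it remains to show $p,q\in M\Rightarrow p\vee q\in M$. But $P$ complete forces $p\vee q\in T$ (every interval of a complete OML is complete, stated in Section \ref{sc:OMLs}), so part (i) gives $p\vee q\in M$. Combined with the order-ideal and projectivity properties already recorded, $M$ is an OML-ideal that is projective, i.e. a $p$-ideal.

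\textbf{Expected hardest step.} The crux is the finiteness dichotomy in part (i): transferring ``modular $\Rightarrow$ no infinite symmetry-orthogonal family'' from $p$ and from $q$ separately to their join $r=p\vee q$, which requires carefully peeling off the central summand on which a hypothetical infinite family is unrelated to $p$, showing that summand lies under $\gamma q$, and there is covered by $q$. Everything else is bookkeeping with the interval/relativization lemmas (Lemma \ref{lm:rAr}, Lemma \ref{lm:relativeTD}, the interval identities) and the already-proved Theorems \ref{th:modfin}, \ref{th:BMTSTD}, and \ref{th:contingeom}.
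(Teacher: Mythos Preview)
Your part (ii) is fine and matches the paper exactly.

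For part (i), the reduction to $rAr$ (so that $P$ is complete and $p\vee q=1$) is right, and so is the idea of invoking Theorem \ref{th:modfin} and Generalized Comparability inside $rAr$. But the central-cover dichotomy you sketch does not close. On the piece where the $e_i$ are ``related to $p$'' you propose to apply Theorem \ref{th:modfin} to $p$; however, relatedness (equivalently $\gamma e_i\wedge\gamma p\neq0$) does \emph{not} put the $e_i$ inside $P[0,p]$, which is what Theorem \ref{th:modfin} requires. Likewise, on the complementary piece you only get $e_i\leq(\gamma p)^{\perp}\leq\gamma q$, not $e_i\leq q$, so Theorem \ref{th:modfin} for $q$ is again inapplicable. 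Splitting by $\gamma p$ and $\gamma q$ only traps the family under central covers, and there is no mechanism in your outline to push the $e_i$ (or symmetry-images of them) under $p$, $q$, or any other projection already known to be modular.

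The paper closes this gap with two ingredients you do not use. First, the symmetry parallelogram law gives $p^{\perp}\sim q-(p\wedge q)\leq q$, so $p^{\perp}\in M$. Second, instead of working with an arbitrary infinite symmetry-exchanged family, the paper invokes Lemma \ref{le:1nonmod} to produce a single projection $e$ with two properties: $e$ is a supremum of such a family (so $e\notin M$ by Theorem \ref{th:modfin}), and there is a symmetry $t$ with $tet\perp e$. Generalized Comparability of $e$ and $p$ then yields a symmetry $s$ and central $c$ with $secs\leq pc$ and $se^{\perp}c^{\perp}s\leq p^{\perp}c^{\perp}$. From $p\in M$ one gets $ec\in M$; from $p^{\perp}\in M$ one gets $e^{\perp}c^{\perp}\in M$; and the ``halving'' property $tet\leq e^{\perp}$ gives $tec^{\perp}t\leq e^{\perp}c^{\perp}$, hence $ec^{\perp}\in M$. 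Then $e=ec+ec^{\perp}\in M$, a contradiction. The step $e^{\perp}c^{\perp}\in M\Rightarrow ec^{\perp}\in M$ is exactly what your family-level argument cannot supply; it needs the extra structure coming from Lemma \ref{le:1nonmod}.
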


\begin{proof} (i) Assuming the hypothesis of (i), we have to 
prove that $P[0,p\vee q]$ is modular; hence we may drop down to 
the synaptic algebra $(p\vee q)A(p\vee q)$ with complete projection 
lattice $P[0,p\vee q]$. Thus, changing notation, we can (and do) 
assume that $P$ is complete, that $p,q\in M$ with $p\vee q=1$, 
and we have to prove that $P$ is modular. By \cite[Theorem 5.9 (ii)]
{SymSA} (the \emph{symmetry parallelogram law}) $p\sp{\perp}=
1-p=(p\vee q)-p$ is exchanged by a symmetry in $A$ with the modular 
projection $q-(p\wedge q)\leq q$, so $p\sp{\perp}$ is modular.

Now, aiming for a contradiction, we assume that $P$ is not modular. 
Therefore by Lemma \ref{le:1nonmod} (with p=1), there is a projection 
$e\in P$ such that $e$ is the supremum of an infinite sequence of 
nonzero projections in $P$ any two of which are exchanged by a 
symmetry in $A$, and there is a symmetry $t\in A$ with $tet\perp e$.
Applying Theorem \ref{th:vargencomp} to the pair $e,p$, we find that 
there is a symmetry $s\in A$ and a central projection $c\in P
\cap C(A)$ such that $secs\leq pc$ and $se\sp{\perp}c\sp{\perp}s
\leq p\sp{\perp}c\sp{\perp}$. From the latter inequality and the 
fact that $p\sp{\perp}\in M$, we infer that $e\sp{\perp}c
\sp{\perp}\in M$. But $tec\sp{\perp}t=tetc\sp{\perp}\leq 
e\sp{\perp}c\sp{\perp}$, whence $ec\sp{\perp}\in M$. Moreover, 
the pair of modular projections $ec$ and $ec\sp{\perp}$ is  
centrally orthogonal, hence $e=ec+ec\sp{\perp}\in M$, 
contradicting Theorem \ref{th:modfin}.

(ii) Part (ii) follows immediately from (i). 
\end{proof}

\begin{theorem} [{Cf. \cite[Corollary 21]{Top65}}]  \label{th:mod} 
Assume that $P$ is complete and let $p,q\in P$. Then{\rm:}
\begin{enumerate}
\item If $p\in M$ and $p\sim q$, then $q\in M$, there is a 
 projection $r\in M$ such that $p,q\in P[0,r]$, and $p$ is 
 perspective to $q$ in $P[0,r]$.
\item If $p\in M$ $p\sim q$, then $q\in M$ and $p$ and $q$ 
 are perspective in $P$. 
\item On the set $M$, perspectivity is transitive. 
\item If $q\leq p\in M$ and $q\sim p$, then $q=p$ {\rm(}i.e., 
 $p$ is finite {\rm\cite[p.23]{Top65})}.
\item If $p,q\in M$, then $p\sim q$ iff $p$ and  $q$ are  
 exchanged by a symmetry in $A$.
\item If $p,q\in M$, $p\preceq q$, and $q\preceq p$, then 
 $p\sim q$.
\end{enumerate}
\end{theorem}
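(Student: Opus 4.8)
The plan is to prove the six parts roughly in the stated order, using completeness of $P$ throughout so that Theorems \ref{th:contingeom}, \ref{th:vargencomp}, and Corollary \ref{co:orthodense} are available, and exploiting the fact (Lemma \ref{lm:sim&proj}) that $\sim$ coincides with projectivity. For part (i), I would start from $p\sim q$ and use Lemma \ref{lam:xpxstar}: writing $q=xpx\sp{\ast}$ for a finite product $x$ of symmetries, the map $a\mapsto xax\sp{\ast}$ is an OML-automorphism of $P$ carrying $P[0,p]$ onto $P[0,q]$, so $q\in M$ since modularity is preserved under OML-isomorphism. For the projection $r$, the natural candidate is $r:=p\vee q$; one must show $r\in M$. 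Since $P$ is complete, $r\in T$, so by Theorem \ref{th:supmod}(i) it suffices to know $p\vee q\in M$ whenever $p,q\in M$ and $p\vee q\in T$ — but that is exactly Theorem \ref{th:supmod}(i). So $r=p\vee q\in M$, and $p,q\in P[0,r]$. That $p$ is perspective to $q$ in $P[0,r]$ follows because $p\sim q$ gives projectivity in $P$, hence (restricting, via Lemma \ref{lm:sim&proj} applied inside $rAr$, or via Theorem \ref{th:tspst}) they are projective in $P[0,r]$; since $P[0,r]$ is modular and complete, Theorem \ref{th:contingeom}(ii) says projectivity equals perspectivity there, so $p$ is perspective to $q$ in $P[0,r]$. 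Part (ii) is then immediate: $q\in M$ from (i), and perspectivity in $P[0,r]$ implies perspectivity in $P$ by Lemma \ref{lm:strongpimpliesp}(i).

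For part (iii), suppose $p,q,r\in M$ with $p$ perspective to $q$ and $q$ perspective to $r$; then $p\sim q\sim r$ (perspective projections are equivalent by Theorem \ref{th:tspst}), so $p\sim r$, and by part (ii) $p$ and $r$ are perspective in $P$. Part (iv): if $q\leq p\in M$ and $q\sim p$, then $q$ and $p$ are projective in $P$ hence, by part (ii) applied inside $P[0,p]$ — or directly by part (i), which puts $p,q$ in a common modular interval $P[0,p]$ with $p$ perspective to $q$ there — we have $q$ perspective to $p$ in $P[0,p]$; since $P[0,p]$ is modular and complete, Theorem \ref{th:contingeom}(iii) (with $p\le p$) forces $q=p$. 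For part (v), the implication "exchanged by a symmetry $\Rightarrow\sim$" is definitional; conversely, if $p,q\in M$ and $p\sim q$, then by Theorem \ref{th:tspst}(iii) there are symmetries $s,t$ with $tspst=q$, and I would like to upgrade this to a single symmetry. Working in $P[0,p\vee q]$, which is modular and complete by Theorem \ref{th:supmod}(i), the elements $p$ and $q$ are perspective there (part (i)), so they are strongly perspective in $P$, and by Corollary \ref{co:tspst} — combined with a reduction to the orthogonal case via the standard trick of splitting off $p\wedge q$ and applying the symmetry parallelogram law \cite[Theorem 5.9]{SymSA} — one obtains a symmetry exchanging $p$ and $q$. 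The cleanest route is probably: reduce to $p\perp q$ using \cite[Theorem 5.9]{SymSA}, then invoke the equivalence (iv)$\Leftrightarrow$(ii) of Corollary \ref{co:tspst}.

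For part (vi), assume $p,q\in M$, $p\preceq q$, and $q\preceq p$. Then there are $q\sb{1}\leq q$ with $p\sim q\sb{1}$ and $p\sb{1}\leq p$ with $q\sim p\sb{1}$; composing, $p\sim q\sb{1}$ and $q\sb{1}\sim$ (some subprojection of $p\sb{1}\leq p$), so iterating one builds a descending chain in $P[0,p]$ and $P[0,q]$. The clean argument is a Cantor–Bernstein-type argument for finite projections: since $p\in M$, part (iv) says $p$ is finite; $p\sim q\sb{1}\le q$ and $q\sim p\sb{1}\le p$ give $p\succeq q\succeq q\sb{1}$ with $p\sim q\sb{1}$, and finiteness of $p$ (via part (iv), applied after noting $q\sb{1}\in M$ by part (ii)) forces the squeeze $p\sim q$. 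Concretely: from $q\sim p\sb{1}\le p$ we get $q\preceq p$, and from $p\sim q\sb{1}\le q$ with $q\sim p\sb{1}$ we get $p\sim q\sb{1}\preceq q\sim p\sb{1}\le p$, so $p\sim p\sb{1}\le p$ via a sub-projection argument — then finiteness of $p$ from (iv) gives the chain stabilises and $p\sim q$.

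The main obstacle I anticipate is part (v): passing from the two-symmetry relation $tspst=q$ to a single symmetry. Theorem \ref{th:tspst} only delivers two symmetries in general, and the extra hypothesis $p,q\in M$ must be used — the right mechanism is the reduction to the orthogonal case via \cite[Theorem 5.9]{SymSA} followed by Corollary \ref{co:tspst}, together with modularity of $P[0,p\vee q]$ to know perspectivity rather than mere projectivity holds. Part (vi) is the second-hardest point, since it is a Schröder–Bernstein argument and one must be careful that all intermediate projections remain modular (which follows from part (ii)) so that finiteness from part (iv) can be applied.
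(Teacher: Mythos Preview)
Your outline has two genuine gaps.

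\textbf{Part (i): the choice $r=p\vee q$ does not work.} You need $p$ and $q$ to be projective \emph{in} $P[0,r]$ in order to invoke Theorem~\ref{th:contingeom}(ii). But $p\sim q$ in $A$ only gives a chain $p=e\sb{1},e\sb{2},\dots,e\sb{n}=q$ with each $e\sb{i},e\sb{i+1}$ exchanged by a symmetry in $A$, and the intermediate $e\sb{i}$ need not lie in $P[0,p\vee q]$. Lemma~\ref{lm:rAr} lets you pull a \emph{single} exchange down to $rAr$ provided both endpoints are below $r$, but it does not let you pull down a whole chain whose intermediate terms escape $P[0,r]$; likewise, Lemma~\ref{lm:sim&proj} ``applied inside $rAr$'' presupposes $p\sim q$ in $rAr$, which is exactly what is at issue. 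The paper fixes this by taking $r:=e\sb{1}\vee e\sb{2}\vee\cdots\vee e\sb{n}$; then each $e\sb{i}\in M$ (Lemma~\ref{lam:xpxstar}), so $r\in M$ by iterated use of Theorem~\ref{th:supmod}(i), and now Lemma~\ref{lm:rAr} applies to each link of the chain, giving projectivity in $P[0,r]$. With this correction your (ii)--(iv) go through.

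\textbf{Part (v): the reduction to the orthogonal case is not justified.} You propose to split off $p\wedge q$ and apply Corollary~\ref{co:tspst} to the orthogonal pair $p-(p\wedge q)$, $q-(p\wedge q)$. But to use that corollary you would need these two projections to be perspective, and neither strong perspectivity of $p,q$ nor the symmetry parallelogram law (which relates $p-(p\wedge q)$ to $(p\vee q)-q$, not to $q-(p\wedge q)$) gives you that directly. The paper takes a completely different route: apply generalized comparability (Theorem~\ref{th:vargencomp}) to obtain a symmetry $s$ and central $c$ with $spcs\le qc$ and $sqc\sp{\perp}s\le pc\sp{\perp}$, then use the finite product $x$ implementing $p\sim q$ together with part (iv) (finiteness) to force these inequalities to be equalities, yielding $sps=q$. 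Your sketch of (vi) is essentially the paper's argument stated loosely; make it precise by writing $q\sb{1}=upu\sp{\ast}\le q$, $p\sb{1}=xqx\sp{\ast}\le p$, observing $xq\sb{1}x\sp{\ast}\le p$ with $xq\sb{1}x\sp{\ast}\sim p$, and applying (iv).
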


\begin{proof} (i) Assume $p\in M$ and $p\sim q$. Since $M$ is 
projective, $q\in M$. Also there exist projections $p=e\sb{1},
e\sb{2},...,e\sb{n}=q$ such that $e\sb{i}$ is exchanged by a 
symmetry in $A$ with $e\sb{i+1}$ for $i=1,2...,n-1$. Since $p
\in M$, it follows from Lemma \ref{lam:xpxstar} (iii) that 
$e\sb{1},e\sb{2},...e\sb{n}\in M$, and by Theorem \ref{th:supmod}, 
$r:=e\sb{1}\vee e\sb{2}\vee\cdots\vee e\sb{n}\in M$. By Lemma 
\ref{lm:rAr}, for $i=1,2,...,n-1$, $e\sb{i}$ is exchanged with 
$e\sb{i+1}$ by a symmetry in $rAr$. Since $P$ is complete, so 
is $P[0,r]$; hence, we may apply Theorem \ref{th:contingeom} 
(ii) to $rAr$ and its complete modular projection lattice 
$P[0,r]$ and infer that $p=e\sb{1}$ is perspective to $q=
e\sb{n}$ in $P[0,r]$.

(ii) By (i) and Lemma \ref{lm:strongpimpliesp}, $p$ and $q$ are 
perspective in $P$.

(iii) Suppose that $p,q,r\in M$ with $p$ perspective to $q$ and $q$ 
perspective to $r$ in $P$. Then by Theorem \ref{th:tspst} (iv), 
$p\sim q$ and $q\sim r$, so $p\sim r$, and by (ii), $p$ is 
perspective to $r$ in $P$.

(iv) Assume that $q\leq p\in M$ and $q\sim p$. By (i) there 
exists $r\in M$ such that $q\leq p\in P[0,r]$ and $p$ is 
perspective to $q$ in $P[0,r]$; hence $p=q$ by Theorem 
\ref{th:modularcondition} applied to the modular OML $P[0,r]$.  

(v) Suppose that $p\in M$ $p\sim q$. Then $q\in M$ and applying 
Theorem \ref{th:vargencomp} we infer that there is a symmetry 
$s\in A$ and a central projection $c\in P\cap C(A)$ such that 
$spcs\leq qc$ and $sqc\sp{\perp}s\leq pc\sp{\perp}$. Since 
$p\sim q$, there is a finite product of symmetries $x$ such that 
$xpx\sp{\ast}=q$. Thus, $spcs\leq qc=xpcx\sp{\ast}$, whence 
$e:=x\sp{\ast}spcsx\leq pc$ with $sxex\sp{\ast}s=pc$. Therefore, 
$e\leq pc$ with $e\sim pc$, and since $pc\in M$, $e=pc$ by (iv), 
and it follows that $spcs=xex\sp{\ast}=xpcx\sp{\ast}=qc$.  
Likewise, $f:=xsqc\sp{\perp}sx\sp{\ast}\leq xpc\sp{\perp}x\sp
{\ast}=qc\sp{\perp}$ with $sx\sp{\ast}fxs=qc\sp{\perp}$, and 
we deduce that $f=qc\sp{\perp}$, whence $sqc\sp{\perp}s=x\sp{\ast}fx=
x\sp{\ast}qc\sp{\perp}x=pc\sp{\perp}$, so $spc\sp{\perp}s=qc\sp{\perp}$. 
Consequently, $sps=spcs+spc\sp{\perp}s=qc+qc\sp{\perp}=q$. 
Conversely, if $p$ and $q$ are exchanged by a symmetry, then 
$p\sim q$.

(vi) By hypothesis, there are finite products of symmetries 
$u$ and $x$ such that $q\sb{1}:=upu\sp{\ast}\leq q$ and $p\sb{1}:=
xqx\sp{\ast}\leq p$. Thus, $xq\sb{1}x\sp{\ast}\leq xqx\sp{\ast}
=p\sb{1}\leq p$ with $xq\sb{1}x\sp{\ast}=xupu\sp{\ast}x\sp{\ast}
\sim p$. By (iv), $xq\sb{1}x\sp{\ast}=p$, and therefore $q\sb{1}
=x\sp{\ast}px$. Consequently, $q=x\sp{\ast}p\sb{1}x\leq x\sp{\ast}
px=q\sb{1}$, so $q\sb{1}=q$, whence $p\sim q$. 
\end{proof}

Examination of the results in \cite{Top65} required for Topping's 
proof of his version of the type-I/II/III decomposition theorem 
for a JW-algebra \cite[Theorem 13]{Top65} now shows that all of 
these results either have been obtained above (often assuming 
that $P$ is complete) or follow easily from the results above. 
Therefore, we claim that our first project has been accomplished. 
We now focus on our second project. 

\section{The fundamental direct-decomposition\newline theorem} 
\label{sc:fundamental} 

\emph{The assumption that $P$ is centrally orthocomplete is 
still in force.}

\begin{assumption} In this section and the next, we assume that 
$Q$ is a TD subset of $P$.
\end{assumption}

We note that our subsequent results, apart from Theorem \ref
{th:type I/II/IIIprops}, do not require completeness of the OML 
$P$, nor do they require that $Q$ is STD. The terminology in 
the following definition is borrowed from \cite[pp. 28--29]{Top65}.

\begin{definition} \label{df:typeQetc}
Let $c\in P\cap C(A)$. Then:
\begin{enumerate}
\item [(1)] $c$ is \emph{type-$Q$} iff $c\in Q$.
\item [(2)] $c$ is \emph{locally type-Q} iff $c\in\gamma(Q)$. 
\item [(3)] $c$ is \emph{purely non-$Q$} iff no nonzero 
 subprojection of $c$ belongs to $Q$.
\item [(4)] $c$ is \emph{properly non-$Q$} iff no nonzero 
 central subprojection of $c$ belongs \linebreak to $Q$.
\end{enumerate}
\end{definition}

If $c\in P\cap C(A)$ and if $c$ is type-$Q$ (respectively, 
locally type-$Q$, purely non-$Q$, etc.), one also says that 
the direct summand $P[0,c]$ of $P$ and the direct summand 
$cA$ of $A$ are type-$Q$ (respectively, locally type-$Q$, 
purely non-$Q$, etc.).

We note that, by Theorem \ref{th:BMTSTD} (iv), for central 
projections $c\in P\cap C(A)$, the notion of local 
modularity introduced in Definition \ref{df:abelianetc} (3) 
is consistent with Definition \ref{df:typeQetc} (2), i.e., 
$c\in M\sb{0}$ iff $c$ is locally type-$M$.

\begin{theorem} [{\cite[Theorem 5.2]{FPType}}] \label{th:typeprops}
Let $c\in P\cap C(A)$. Then{\rm:}
\begin{enumerate}
\item $c$ is type-$Q$ iff $c\in Q\cap\gamma(Q)=Q\cap C(A)$ iff 
 every central subprojection of $c$ belongs to $Q\cap C(A)$ iff 
 $c\leq c\sb{Q\cap C(A)}$.
\item If $Q$ is STD, then $c$ is type-$Q$ iff $P[0,c]\subseteq Q$. 
\item $c$ is locally type-$Q$ iff every central subprojection of 
 $c$ belongs to $\gamma(Q)$ iff $c\leq c\sb{Q}$. 
\item $c$ is purely non-$Q$ iff $Q\cap P[0,c]=\{0\}$ iff $c\leq 
 (c\sb{Q})\sp{\perp}$.
\item $c$ is properly non-$Q$ iff the only central projection 
 in $Q\cap P[0,c]$ is $0$ iff $c\leq(c\sb{Q\cap C(A)})
 \sp{\perp}$.
\end{enumerate}
\end{theorem}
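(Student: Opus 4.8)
The plan is to prove Theorem \ref{th:typeprops} by systematically unwinding each of the five statements using the type-cover $c\sb{Q}$ and restricted type-cover $c\sb{Q\cap C(A)}$ introduced via Theorem \ref{th:TDEAth4.5} and Lemma \ref{lm:typecover}. The key structural facts already available are: $\gamma(Q)=(P\cap C(A))[0,c\sb{Q}]$, $Q\cap C(A)=(P\cap C(A))[0,c\sb{Q\cap C(A)}]$, that $c\sb{Q}$ is the largest projection in $\gamma(Q)$ with every central subprojection of it lying in $\gamma(Q)$, that $c\sb{Q\cap C(A)}$ is the largest central projection in $Q$, and the characterizations in Lemma \ref{lm:typecover} (vi), (vii) relating orthogonality to $c\sb{Q}$ and $c\sb{Q\cap C(A)}$. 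Throughout, one uses freely that for $c\in P\cap C(A)$ the center of $P[0,c]$ is $(P\cap C(A))[0,c]$ and that central subprojections of $c$ are exactly the elements of $(P\cap C(A))[0,c]$.

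First I would dispatch (i): by definition $c$ is type-$Q$ iff $c\in Q$; since $c$ is central, $c\in Q$ iff $c\in Q\cap C(A)$, and because $P$ is centrally orthocomplete, $C(A)=C(P)$ so $Q\cap C(A)=Q\cap\gamma(Q)$ by Theorem \ref{th:TDEAth4.5} (i). The equivalence with ``$c\leq c\sb{Q\cap C(A)}$'' is then immediate from Lemma \ref{lm:typecover} (ii) together with the fact that $Q\cap C(A)=(P\cap C(A))[0,c\sb{Q\cap C(A)}]$: if $c\leq c\sb{Q\cap C(A)}$ then $c$ is a central subprojection of an element of $Q$, hence (since $Q\sp{\gamma}\subseteq Q$) itself in $Q$; and every central subprojection of such a $c$ lies in $Q\cap C(A)$ for the same reason, giving the remaining ``iff''. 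For (ii), when $Q$ is STD it is also an order ideal, so $c\in Q$ forces $P[0,c]=Q\cap P[0,c]\subseteq Q$ by $Q\sp{\downarrow}\subseteq Q$; conversely $P[0,c]\subseteq Q$ trivially gives $c\in Q$.

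For (iii) I would argue that $c$ is locally type-$Q$, i.e. $c\in\gamma(Q)$, iff $c\leq c\sb{Q}$ using Lemma \ref{lm:typecover} (i): $c\sb{Q}$ is the largest element of $\gamma(Q)$, so $c\in\gamma(Q)\Rightarrow c\leq c\sb{Q}$, and conversely every central subprojection of $c\sb{Q}$ is in $\gamma(Q)$; the middle equivalence (``every central subprojection of $c$ belongs to $\gamma(Q)$'') follows because $\gamma(Q)=(P\cap C(A))[0,c\sb{Q}]$ is itself an order ideal in $P\cap C(A)$. For (iv), $c$ purely non-$Q$ means $Q\cap P[0,c]=\{0\}$, which by Lemma \ref{lm:typecover} (vi) is equivalent to $c\perp c\sb{Q}$, i.e. $c\leq(c\sb{Q})\sp{\perp}$. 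For (v), $c$ properly non-$Q$ means the only central projection in $Q\cap P[0,c]$ is $0$, i.e. $Q\cap(P\cap C(A))[0,c]=\{0\}$, which by Lemma \ref{lm:typecover} (vii) is equivalent to $c\perp c\sb{Q\cap C(A)}$, i.e. $c\leq(c\sb{Q\cap C(A)})\sp{\perp}$.

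I do not anticipate a serious obstacle here: the theorem is essentially a bookkeeping translation of the already-established properties of the two type-covers, and the main point is simply to be careful about which cover ($c\sb{Q}$ versus $c\sb{Q\cap C(A)}$) governs which notion, and to invoke $Q\sp{\gamma}\subseteq Q$ (or $Q\sp{\downarrow}\subseteq Q$ in the STD case) at the right moments so that ``$c\leq c\sb{Q\cap C(A)}$'' genuinely places $c$ (and all its central subprojections) back inside $Q$. The only place demanding a tiny bit of care is statement (ii), where one must explicitly use that STD implies $Q$ is an order ideal; everything else is a direct appeal to Lemma \ref{lm:typecover}.
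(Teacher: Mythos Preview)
Your proposal is correct and proceeds exactly as intended: the paper does not supply its own proof of this theorem, citing it directly from \cite[Theorem 5.2]{FPType}, and your argument simply unwinds Definitions~\ref{df:typeQetc} against the characterizations of $c\sb{Q}$ and $c\sb{Q\cap C(A)}$ already imported in Theorem~\ref{th:TDEAth4.5} and Lemma~\ref{lm:typecover}. One trivial remark: the identity $C(A)=C(P)$ holds in any synaptic algebra (Section~\ref{sc:OMLP}, item (vi)) and does not require central orthocompleteness, but this does not affect your argument.
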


\begin{corollary}
Let $c,d\in P\cap C(A)$. Then{\rm: (i)} If $c$ is type-$Q$, then 
$c$ is locally type-$Q$. {\rm(ii)} If $c$ is purely non-$Q$, then 
$c$ is properly non-$Q$. {\rm(iii)} If $c$ is both type-$Q$ and 
properly non-$Q$, then $c=0$. {\rm(iv)} If $c$ is both locally 
type-$Q$ and purely non-$Q$, then $c=0$. {\rm(v)} If $c$ is 
type-$Q$ {\rm(}respectively, locally type-$Q$, purely non-$Q$, 
properly non-$Q${\rm)}, then so is $c\wedge d$. {\rm(vi)} If both 
$c$ and $d$ are type-$Q$ {\rm(}respectively, locally type-$Q$, purely 
non-$Q$, properly non-$Q${\rm)}, then so is $c\vee d$. 
\end{corollary}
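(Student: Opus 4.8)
The plan is to derive each of the six assertions directly from the characterizations of ``type-$Q$'', ``locally type-$Q$'', ``purely non-$Q$'', and ``properly non-$Q$'' in terms of the central projections $c_Q$ and $c_{Q\cap C(A)}$ supplied by Theorem \ref{th:typeprops}, together with the inequality $c_{Q\cap C(A)}\leq c_Q$ from Lemma \ref{lm:typecover} (iii) and the elementary arithmetic of the boolean algebra $P\cap C(A)$. The point is that all four properties are, by Theorem \ref{th:typeprops}, simply the conditions $c\leq c_{Q\cap C(A)}$, $c\leq c_Q$, $c\leq c_Q^{\perp}$, and $c\leq c_{Q\cap C(A)}^{\perp}$ respectively, so each item reduces to a one-line boolean computation.

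First I would dispose of (i) and (ii): if $c\leq c_{Q\cap C(A)}$ then $c\leq c_Q$ since $c_{Q\cap C(A)}\leq c_Q$; dually, if $c\leq c_Q^{\perp}$ then $c\leq c_{Q\cap C(A)}^{\perp}$ since $c_{Q\cap C(A)}^{\perp}\geq c_Q^{\perp}$. For (iii), type-$Q$ and properly non-$Q$ give $c\leq c_{Q\cap C(A)}$ and $c\leq c_{Q\cap C(A)}^{\perp}$, so $c=c\wedge c_{Q\cap C(A)}\wedge c_{Q\cap C(A)}^{\perp}=0$; (iv) is the same argument with $c_Q$ in place of $c_{Q\cap C(A)}$, using $c\leq c_Q$ and $c\leq c_Q^{\perp}$. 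For (v), if $c\leq c_Q$ (say) then $c\wedge d\leq c\leq c_Q$, and identically for the other three bounds; thus each of the four properties is inherited by $c\wedge d$. For (vi), if $c\leq c_Q$ and $d\leq c_Q$ then $c\vee d\leq c_Q$ because $c_Q$ is an upper bound of $\{c,d\}$, and likewise for the bounds $c_{Q\cap C(A)}$, $c_Q^{\perp}$, $c_{Q\cap C(A)}^{\perp}$.

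I do not anticipate a genuine obstacle here; the only thing to be careful about is citing the \emph{right} characterization from Theorem \ref{th:typeprops} for each property (items (1), (3), (4), (5) of that theorem, respectively) and remembering that the inclusion between the two type-covers runs in the single direction $c_{Q\cap C(A)}\leq c_Q$, so that ``type-$Q$ $\Rightarrow$ locally type-$Q$'' and ``purely non-$Q$ $\Rightarrow$ properly non-$Q$'' but not the converses. Everything else is bookkeeping in the complete boolean algebra $P\cap C(A)$, where $x\leq y$ and $x\leq z$ imply $x\leq y\wedge z$, and $x\leq z$, $y\leq z$ imply $x\vee y\leq z$, together with the order-reversing property of orthocomplementation. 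Accordingly the write-up can be quite short, essentially just invoking Theorem \ref{th:typeprops} and Lemma \ref{lm:typecover} (iii) and performing these computations for each of the six parts.
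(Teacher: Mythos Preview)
Your proposal is correct and follows exactly the approach the paper intends: the corollary is stated immediately after Theorem \ref{th:typeprops} with no explicit proof, precisely because each item is an instant consequence of the characterizations $c\leq c\sb{Q\cap C(A)}$, $c\leq c\sb{Q}$, $c\leq(c\sb{Q})\sp{\perp}$, $c\leq(c\sb{Q\cap C(A)})\sp{\perp}$ together with $c\sb{Q\cap C(A)}\leq c\sb{Q}$ from Lemma \ref{lm:typecover} (iii). Your boolean-algebra bookkeeping is the expected argument.
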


\begin{lemma} [{\cite[Lemma 5.5]{FPType}}] \label{lm:binarydecomp}
{\rm(i)} There exists a unique central projection $c$, namely 
$c=c\sb{Q}$, such that $A=cA\oplus c\sp{\perp}A$, $cA$ is locally 
type-$Q$, and $c\sp{\perp}A$ is purely non-$Q$; moreover, $Q
\subseteq P[0,c\sb{Q}]$. {\rm(ii)} There exists a unique central 
projection $d$, namely $d=c\sb{Q\cap C(A)}$, such that $A=dA
\oplus d\sp{\perp}A$, $dA$ is type-$Q$, and $d\sp{\perp}A$ is 
properly non-$Q$; moreover, $Q\cap C(A)\subseteq P[0,c
\sb{Q\cap C(A)}]$.
\end{lemma}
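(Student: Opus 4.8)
The plan is to prove Lemma \ref{lm:binarydecomp} by a direct appeal to the type-cover machinery already assembled in Theorem \ref{th:TDEAth4.5} and Lemma \ref{lm:typecover}, since both assertions are essentially restatements of ``$c_Q$'' and ``$c_{Q\cap C(A)}$'' in direct-summand language. For part (i), I would set $c := c_Q$. By Theorem \ref{th:TDEAth4.5} (ii) we have $\gamma(Q) = (P\cap C(A))[0,c_Q]$, and by Lemma \ref{lm:typecover} (iv) the projection $c_Q$ is the smallest central projection with $Q \subseteq P[0,c_Q]$; in particular $Q \subseteq P[0,c_Q]$, which is the ``moreover'' clause. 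Since $c_Q \in P\cap C(A)$, the direct-sum decomposition $A = c_Q A \oplus c_Q^{\perp}A$ is the standard central decomposition recorded in Section \ref{sc:OMLP}. It then remains to verify two things: that $c_Q A$ is locally type-$Q$ and that $c_Q^{\perp}A$ is purely non-$Q$. The first is immediate from Theorem \ref{th:typeprops} (iii), which says a central projection is locally type-$Q$ iff it is $\leq c_Q$; the second is Theorem \ref{th:typeprops} (iv), which says a central projection is purely non-$Q$ iff it is $\leq (c_Q)^{\perp}$, and $c_Q^{\perp}$ trivially satisfies this.

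For part (ii), the argument is parallel with $d := c_{Q\cap C(A)}$. By Theorem \ref{th:TDEAth4.5} (iii), $Q\cap C(A) = (P\cap C(A))[0,c_{Q\cap C(A)}]$, giving the ``moreover'' clause $Q\cap C(A)\subseteq P[0,c_{Q\cap C(A)}]$, and Lemma \ref{lm:typecover} (ii) identifies $c_{Q\cap C(A)}$ as the largest central projection lying in $Q$. Again $A = dA \oplus d^{\perp}A$ is the central decomposition associated to $d$. That $dA$ is type-$Q$ follows from Theorem \ref{th:typeprops} (1), whose last clause characterizes type-$Q$ central projections as those $\leq c_{Q\cap C(A)}$; that $d^{\perp}A$ is properly non-$Q$ follows from Theorem \ref{th:typeprops} (5), which characterizes properly non-$Q$ central projections as those $\leq (c_{Q\cap C(A)})^{\perp}$.

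The uniqueness clauses in both parts require a short separate argument rather than a citation. For (i), suppose $c' \in P\cap C(A)$ also satisfies $c'A$ locally type-$Q$ and $(c')^{\perp}A$ purely non-$Q$; then by Theorem \ref{th:typeprops} (iii) we get $c' \leq c_Q$, and by Theorem \ref{th:typeprops} (iv) applied to $(c')^{\perp}$ we get $(c')^{\perp} \leq (c_Q)^{\perp}$, i.e. $c_Q \leq c'$, so $c' = c_Q$. The uniqueness in (ii) is the same computation with $c_{Q\cap C(A)}$ in place of $c_Q$, using clauses (1) and (5) of Theorem \ref{th:typeprops}. None of this is a genuine obstacle: the one point worth stating carefully is the interplay of the two characterizations (``$\leq c_Q$'' versus ``$\leq c_Q^{\perp}$'') that pins down $c'$ from both sides, and the only thing I would be careful about is making sure the complemented central projection $c_Q^{\perp}$ really does satisfy the purely non-$Q$ criterion — but that is exactly the content of Theorem \ref{th:typeprops} (iv) with $c = c_Q^{\perp}$, since $c_Q^{\perp} \leq (c_Q)^{\perp}$ holds with equality. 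Thus the lemma is a bookkeeping corollary of the type-cover results, and the ``proof'' is essentially a guided tour through Theorems \ref{th:TDEAth4.5} and \ref{th:typeprops} together with Lemma \ref{lm:typecover}.
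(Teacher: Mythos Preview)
Your argument is correct. Note that the paper does not actually supply its own proof of this lemma; it simply cites \cite[Lemma 5.5]{FPType}. Your reconstruction from Theorem~\ref{th:TDEAth4.5}, Lemma~\ref{lm:typecover}, and Theorem~\ref{th:typeprops} (themselves quoted from the same source) is exactly the intended route, and the uniqueness argument via the two-sided squeeze $c'\leq c_Q$ and $c_Q\leq c'$ is the standard one.
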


The following theorem results from combining the direct decompositions 
in parts (i) and (ii) of Lemma \ref{lm:binarydecomp}. We regard 
this theorem as the \emph{fundamental direct-decomposition 
theorem} for the synaptic algebra $A$.

\begin{theorem} [{\cite[Theorem 5.6]{FPType}}]
Corresponding to the TD set $Q$, there exist unique pairwise 
orthogonal central projections $c\sb{1}$, $c\sb{2}$ and 
$c\sb{3}$, namely $c\sb{1}=c\sb{Q\cap C(A)}$, $c\sb{2}=c
\sb{Q}\wedge(c\sb{Q\cap C(A)})\sp{\perp}$, and $c\sb{3}=
(c\sb{Q})\sp{\perp}$, such that $c\sb{1}+c\sb{2}+c\sb{3}=1$;
\[
A=c\sb{1}A\oplus c\sb{2}A\oplus c\sb{3}A;
\] 
$c\sb{1}A$ is type-$Q$; $c\sb{2}A$ is locally type-$Q$, but properly 
non-$Q$; and $c\sb{3}$ is purely non-$Q$. Moreover, $Q\cap C(A)=
(P\cap[C(A))[0,c\sb{1}]$, $Q\subseteq P[0,c\sb{1}+c\sb{2}]$, and 
$(P\cap C(A))[0,c\sb{2}+c\sb{3}]\cap Q=\{0\}$.  
\end{theorem}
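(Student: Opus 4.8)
The plan is to assemble the claimed three-fold decomposition from the two binary decompositions already established in Lemma \ref{lm:binarydecomp}, verifying in turn that the three central projections are pairwise orthogonal, that they sum to $1$, and that each direct summand has the asserted type. First I would recall from Lemma \ref{lm:binarydecomp}(i) that $c\sb{Q}A$ is locally type-$Q$ and $(c\sb{Q})\sp{\perp}A$ is purely non-$Q$, and from Lemma \ref{lm:binarydecomp}(ii) that $c\sb{Q\cap C(A)}A$ is type-$Q$ and $(c\sb{Q\cap C(A)})\sp{\perp}A$ is properly non-$Q$. By Lemma \ref{lm:typecover}(iii) we have $c\sb{Q\cap C(A)}\leq c\sb{Q}$, so setting $c\sb{1}:=c\sb{Q\cap C(A)}$, $c\sb{2}:=c\sb{Q}\wedge(c\sb{Q\cap C(A)})\sp{\perp}=c\sb{Q}-c\sb{1}$, and $c\sb{3}:=(c\sb{Q})\sp{\perp}$, the three projections are manifestly pairwise orthogonal central projections with $c\sb{1}+c\sb{2}+c\sb{3}=c\sb{1}+(c\sb{Q}-c\sb{1})+(1-c\sb{Q})=1$. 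The associated direct-sum decomposition $A=c\sb{1}A\oplus c\sb{2}A\oplus c\sb{3}A$ then follows from the general discussion of direct sums in Section \ref{sc:OMLP}.

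Next I would pin down the type of each summand. That $c\sb{1}A=c\sb{Q\cap C(A)}A$ is type-$Q$ is immediate from Lemma \ref{lm:binarydecomp}(ii). For $c\sb{3}A=(c\sb{Q})\sp{\perp}A$, purity follows directly from Lemma \ref{lm:binarydecomp}(i). The one summand requiring a small argument is $c\sb{2}A$: I would show it is locally type-$Q$ using Theorem \ref{th:typeprops}(3), since $c\sb{2}=c\sb{Q}\wedge(c\sb{1})\sp{\perp}\leq c\sb{Q}$ forces $c\sb{2}$ to be locally type-$Q$; and I would show it is properly non-$Q$ using Theorem \ref{th:typeprops}(5), since $c\sb{2}\leq(c\sb{Q\cap C(A)})\sp{\perp}=(c\sb{1})\sp{\perp}$ gives $c\sb{2}\leq(c\sb{Q\cap C(A)})\sp{\perp}$, which is exactly the criterion for properly non-$Q$. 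Thus $c\sb{2}A$ is locally type-$Q$ but properly non-$Q$, as claimed.

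Finally I would dispatch the three supplementary assertions. The identity $Q\cap C(A)=(P\cap C(A))[0,c\sb{1}]$ is just Theorem \ref{th:TDEAth4.5}(iii) with $c\sb{Q\cap C(A)}=c\sb{1}$. The inclusion $Q\subseteq P[0,c\sb{1}+c\sb{2}]$ follows because $c\sb{1}+c\sb{2}=c\sb{Q}$ and $Q\subseteq P[0,c\sb{Q}]$ by Lemma \ref{lm:typecover}(iv) (or the last clause of Lemma \ref{lm:binarydecomp}(i)). For the last claim, note $c\sb{2}+c\sb{3}=(c\sb{Q\cap C(A)})\sp{\perp}=(c\sb{1})\sp{\perp}$; then $(P\cap C(A))[0,(c\sb{1})\sp{\perp}]\cap Q$ consists of central projections in $Q$ orthogonal to $c\sb{1}=c\sb{Q\cap C(A)}$, and by Lemma \ref{lm:typecover}(vii) (or Theorem \ref{th:typeprops}(5)) this intersection is $\{0\}$, since $c\sb{Q\cap C(A)}$ is the largest central projection in $Q$.

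There is essentially no serious obstacle here: the theorem is a bookkeeping consequence of Lemma \ref{lm:binarydecomp}, Lemma \ref{lm:typecover}, and Theorem \ref{th:typeprops}. The only point demanding care is making sure the inequalities $c\sb{Q\cap C(A)}\leq c\sb{Q}$ are invoked in the right places so that $c\sb{2}$ simultaneously lies below $c\sb{Q}$ (giving local type-$Q$) and is orthogonal to $c\sb{Q\cap C(A)}$ (giving properly non-$Q$); once that is observed, the rest is immediate.
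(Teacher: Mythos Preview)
Your approach is correct and is exactly what the paper indicates: the paper gives no detailed proof but says only that the theorem ``results from combining the direct decompositions in parts (i) and (ii) of Lemma~\ref{lm:binarydecomp},'' which is precisely what you carry out, with Theorem~\ref{th:typeprops} and Lemma~\ref{lm:typecover} supplying the type checks and the supplementary inclusions. The one clause you do not address is uniqueness; it follows at once from the uniqueness statements in Lemma~\ref{lm:binarydecomp}, since for any triple $c\sb{1}',c\sb{2}',c\sb{3}'$ with the stated properties one has $c\sb{1}'+c\sb{2}'$ locally type-$Q$ and $c\sb{3}'$ purely non-$Q$ (forcing $c\sb{1}'+c\sb{2}'=c\sb{Q}$ by part~(i)), and $c\sb{1}'$ type-$Q$ with $c\sb{2}'+c\sb{3}'$ properly non-$Q$ (forcing $c\sb{1}'=c\sb{Q\cap C(A)}$ by part~(ii)).
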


\section{The type-I/II/III decomposition theorem} \label{sc:I/II/III}

\emph{The assumption that $P$ is centrally orthocomplete is still 
in force.}

\begin{assumption}
In this section, we continue to assume that $Q\subseteq P$ is TD, 
and we also assume that $K\subseteq P$ is TD and that $Q\subseteq K$. 
\end{assumption}

\noindent Since $Q\subseteq K$, we have $c\sb{Q}\leq c\sb{K}$ and $c
\sb{Q\cap C(A)}\leq c\sb{K\cap\gamma(K)}$.

\begin{definition}
Let $c\in P\cap C(A)$. Then, with respecct to the pair of TD sets 
$Q\subseteq K$:
\begin{enumerate}
\item [(1)] $c$ is \emph{type} I iff it is locally type-$Q$.
\item [(2)] $c$ is \emph{type} II iff it is locally type $K$, but 
 purely non-$Q$.
\item [(3)] $c$ is \emph{type} III iff it is purely non-$K$. 
\item [(4)] $c$ is \emph{type} I$\sb{K}$ (respectively, \emph{type} 
 II$\sb{K}$) iff it is type I (respectively, type II) and also 
 type-$K$.
\item [(5)] $c$ is \emph{type} I$\sb{{\widetilde K}}$ (respectively, 
 \emph{type} II$\sb{{\widetilde K}}$) iff it is type I (respectively, 
 type II and also properly non-$K$).
\end{enumerate} 
\end{definition}

\noindent If $c\in P\cap C(A)$ and if $c$ is type I (respectively, 
type II, type III, etc.),  one also says that the direct  
summand $P[0,c]$ of $P$ and the direct summand $cA$ of 
$A$ are type I (respectively, type II, type III, etc.).

\begin{lemma} \label{lm:alttypeIconds}
Let $c\in P\cap C(A)$. Then the following conditions are 
mutually equivalent{\rm: (i)} $c$ is type I. {\rm(ii)} 
There is a projection $q\in Q$ such that $\gamma q=c$.
{\rm(iii)} There is a projection $q\in Q\cap P[0,c]$ 
that is faithful in $P[0,c]$. {\rm(iv)} Every nonzero direct summand 
of $P[0,c]$ contains a nonzero projection in $Q$. {\rm(v)} 
$c\leq c\sb{Q}$.  
\end{lemma}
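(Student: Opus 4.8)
The plan is to prove the equivalence of (i)--(v) by establishing a cycle of implications, leaning heavily on the machinery already developed for type-determining sets in Section~\ref{sc:TD}. The key observation is that ``type I'' for the pair $Q\subseteq K$ is \emph{defined} to mean ``locally type-$Q$'' (Definition, item (1)), so the entire statement reduces to collecting the characterizations of locally type-$Q$ central projections scattered through the preceding theorems. In particular, the equivalence (i)$\Leftrightarrow$(v), i.e. ``$c$ is locally type-$Q$ iff $c\leq c\sb{Q}$,'' is literally Theorem~\ref{th:typeprops}(iii), and the equivalence (i)$\Leftrightarrow$(ii) (locally type-$Q$ iff $c\in\gamma(Q)$ iff there is $q\in Q$ with $\gamma q=c$) is just the definition in Definition~\ref{df:typeQetc}(2) unwound via the definition of $\gamma(Q)$. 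So the real content is tying in conditions (iii) and (iv).

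First I would dispose of (i)$\Leftrightarrow$(v)$\Leftrightarrow$(ii) by citing Theorem~\ref{th:typeprops}(iii) together with the definitions of locally type-$Q$ and of $\gamma(Q)=\{\gamma q:q\in Q\}$; one should note that if $\gamma q=c$ then automatically $q=q\gamma q\leq\gamma q=c$, so the witnessing projection may be taken in $Q\cap P[0,c]$, which is the ``in $P[0,c]$'' refinement needed to connect with (iii). Next, for (ii)$\Leftrightarrow$(iii): given $q\in Q\cap P[0,c]$ with $\gamma q=c$, Lemma~\ref{lm:relfaith} (with the roles of $c$ and $f=q$) says precisely that $\gamma q=c$ is equivalent to $q$ being faithful in the projection lattice $P[0,c]$ of $cA$; this is a direct translation. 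Finally, for (ii)$\Leftrightarrow$(iv), I would invoke Corollary~\ref{co:cingammaQ}: it states exactly that for $c\in P\cap C(A)$ and $Q$ a TD set, $c\in\gamma(Q)$ iff $Q$ has nonzero intersection with every nonzero direct summand of $P[0,c]$. Since ``$c\in\gamma(Q)$'' is condition (ii) and ``$Q$ meets every nonzero direct summand of $P[0,c]$ in a nonzero projection'' is condition (iv), this is immediate. One small point to handle is the degenerate case $c=0$: then all five conditions hold vacuously (the empty family, $q=0\in Q$ since $0\in Q$ for every TD set, etc.), so we may assume $c\neq 0$ when applying Corollary~\ref{co:cingammaQ} exactly as that corollary's own proof does.

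The main obstacle, such as it is, is not mathematical depth but bookkeeping: one must be careful that the various ``faithful in $P[0,c]$'' and ``direct summand of $P[0,c]$'' locutions in the hypotheses of Lemma~\ref{lm:relfaith} and Corollary~\ref{co:cingammaQ} match the statements verbatim, and that the requirement $P$ centrally orthocomplete (Assumption~\ref{as:CO}), which is what makes $\gamma$ and hence all of this available, is silently in force. I do not anticipate needing completeness of $P$ or the STD property of $Q$ here --- the lemma is purely about the TD set $Q$ and the central cover map. Thus the proof will be short: state (i)$\Leftrightarrow$(v) from Theorem~\ref{th:typeprops}(iii), (i)$\Leftrightarrow$(ii) by definitions (with the remark that $\gamma q=c\Rightarrow q\leq c$), (ii)$\Leftrightarrow$(iii) from Lemma~\ref{lm:relfaith}, and (ii)$\Leftrightarrow$(iv) from Corollary~\ref{co:cingammaQ}, after dispatching $c=0$ trivially.
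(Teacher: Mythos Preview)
Your proposal is correct and follows essentially the same approach as the paper: both derive (i)$\Leftrightarrow$(ii) from the definition of locally type-$Q$, (ii)$\Leftrightarrow$(iii) from Lemma~\ref{lm:relfaith}, and (i)$\Leftrightarrow$(v) from Theorem~\ref{th:typeprops}(iii). The only cosmetic difference is that for condition (iv) you invoke Corollary~\ref{co:cingammaQ} explicitly, whereas the paper folds (iv) into the citation of Theorem~\ref{th:typeprops}(iii); your choice is if anything more precise, since (iv) is exactly the content of Corollary~\ref{co:cingammaQ}.
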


\begin{proof}
(i) $\Leftrightarrow$ (ii) is the definition of $c$ being locally 
type-$Q$, (ii) $\Leftrightarrow$ (iii) follows from Lemma 
\ref{lm:relfaith}, and (i) $\Leftrightarrow$ (iv) $\Leftrightarrow$ 
(v) follows from Theorem \ref{th:typeprops} (iii).  
\end{proof}

The following is the \emph{type}-I/II/III \emph{decomposition theorem 
for synaptic algebras}. It is obtained by combining the fundamental 
direct-decomposition theorems for $Q$ and for $K$.  

\begin{theorem} [{\cite[Theorem 6.4]{FPType}}] \label{th:I/II/III}
Corresponding to the pair of TD sets $Q$ and $K$ with $Q\subseteq K$, 
there are unique pairwise orthogonal central projections $c\,\sb{\rm I}$, 
$c\,\sb{\rm II}$ and $c\,\sb{\rm III}$, namely $c\,\sb{\rm I}=c\sb{Q}$, 
$c\,\sb{\rm II}=c\sb{K}\wedge(c\sb{Q})\sp{\perp}$, and $c\,\sb{\rm III}
=(c\sb{K})\sp{\perp}$, such that $c\,\sb{\rm I}+c\,\sb{\rm II}+c\,\sb
{\rm III}=1$; 
\[
A=c\,\sb{\rm I}A\oplus c\,\sb{\rm II}A\oplus c\,\sb{\rm III}A;
\]
and $c\,\sb{\rm I}A$, $c\sb{\rm II}A$, and $c\,\sb{\rm III}A$ are 
of types {\rm I}, {\rm II}, and {\rm III}, respectively. Moreover, 
there are further decompositions 
\[
c\,\sb{\rm I}A=c\,\sb{\rm I\sb{K}}A\oplus c\,\sb{\rm I
 \sb{{\widetilde K}}}A\text{\ \ and\ \ }c\,\sb{\rm II}A=c
 \,\sb{\rm II\sb{K}}A\oplus c\,\sb{\rm II\sb{{\widetilde K}}}A,
\]
where  $c\,\sb{\rm I\sb{K}}$, $c\,\sb{\rm I\sb{{\widetilde K}}}$, 
$c\sb{\rm II\sb{K}}$, and $c\,\sb{\rm II\sb{{\widetilde K}}}$ are 
central projections of types {\rm I}$\sb{K}$, {\rm I}$
\sb{{\widetilde K}}$, {\rm II}$\sb{K}$, and {\rm II}$
\sb{{\widetilde K}}$, respectively; these decompositions are 
also unique; and 
\[
c\,\sb{\rm I\sb{K}}=c\sb{Q}\wedge c\sb{K\cap\gamma(K)},\   
 c\,\sb{\rm I\sb{{\widetilde K}}}=c\sb{Q}\wedge
 (c\sb{K\cap\gamma(K)})\sp{\perp}, 
\]
\[
c\sb{\rm II\sb{K}}=c\sb{K\cap\gamma(K)}\wedge(c\sb{Q})
\sp{\perp},\ c\,\sb{\rm II\sb{{\widetilde K}}}
=c\sb{K}\wedge(c\sb{K\cap\gamma(K)})\sp{\perp}\wedge(c\sb{Q})
\sp{\perp}.
\]
Furthermore, the type {\rm I}$\sb{K}$ direct summand decomposes 
as
\[
c\,\sb{\rm I\sb{K}}A=c\sb{11}A\oplus c\sb{21}A, 
\]
where $c\sb{11}$ and $c\sb{21}$ are central projections, $c\sb{11}$ 
is type-$Q$ {\rm(}hence also of type-$K${\rm)}, and $c\sb{21}$ is 
type-$K$, locally type-$Q$, but properly non-$Q$. The latter 
decomposition is also unique, and 
\[  
c\sb{11}=c\sb{Q\cap C(A)},\ \  c\sb{21}=c\sb{K\cap\gamma(K)}
 \wedge c\sb{Q}\wedge(c\sb{Q\cap C(A)})\sp{\perp}.
\]
\end{theorem}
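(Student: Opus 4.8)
The plan is to deduce this theorem almost entirely from the fundamental direct-decomposition theorem (Theorem cited as \cite[Theorem 5.6]{FPType}) applied twice --- once to the TD set $Q$ and once to the TD set $K$ --- together with the observation that all the relevant notions relativize well to direct summands. First I would recall that, by the fundamental theorem applied to $Q$, the central projection $c_{Q}$ gives the largest locally type-$Q$ (= type I) summand, so setting $c_{\rm I}:=c_{Q}$ is forced; and by the fundamental theorem applied to $K$, the central projection $c_{K}$ gives the largest locally type-$K$ summand. Since $Q\subseteq K$ gives $c_{Q}\leq c_{K}$, the complement $c_{K}\wedge(c_{Q})^{\perp}$ is locally type-$K$ but purely non-$Q$, i.e.\ type II, and $(c_{K})^{\perp}$ is purely non-$K$, i.e.\ type III; moreover these three central projections are pairwise orthogonal and sum to $1$. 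Uniqueness of the triple follows from the uniqueness clauses in the two applications of the fundamental theorem, since ``type I'' means ``$c\leq c_{Q}$'' (Lemma \ref{lm:alttypeIconds}(v)), ``type III'' means ``$c\leq(c_{K})^{\perp}$'' (Theorem \ref{th:typeprops}(iv) applied to $K$), and type II is squeezed between them.

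Next I would obtain the finer decompositions. Each named summand is defined by imposing one extra ``type-$K$ vs.\ properly non-$K$'' dichotomy on top of a type I or type II summand, so it is exactly the binary decomposition of Lemma \ref{lm:binarydecomp}(ii) applied to the synaptic algebra $c_{\rm I}A$ (respectively $c_{\rm II}A$) with respect to the relativized TD set $K\cap P[0,c_{\rm I}]$ (respectively $K\cap P[0,c_{\rm II}]$); by Lemma \ref{lm:relativeTD} these relativized sets are TD in the appropriate projection intervals, which is what lets me invoke the fundamental theorem inside a direct summand. The central projection that splits off the type-$K$ part of $c_{\rm I}A$ is then the restricted type-cover of $K\cap P[0,c_{\rm I}]$, which by the behaviour of $\gamma$ and type-covers under relativization equals $c_{Q}\wedge c_{K\cap\gamma(K)}$; its complement within $c_{\rm I}$ is $c_{Q}\wedge(c_{K\cap\gamma(K)})^{\perp}$, giving the type I$_{\widetilde K}$ summand. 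The same argument inside $c_{\rm II}A$ produces $c_{\rm II_{K}}=c_{K\cap\gamma(K)}\wedge(c_{Q})^{\perp}$ and $c_{\rm II_{\widetilde K}}=c_{K}\wedge(c_{K\cap\gamma(K)})^{\perp}\wedge(c_{Q})^{\perp}$, and again uniqueness is inherited from Lemma \ref{lm:binarydecomp}(ii).

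Finally, for the last decomposition of $c_{\rm I_{K}}A$ I would apply the fundamental direct-decomposition theorem (the $c_{1}+c_{2}+c_{3}=1$ version) to the type-$K$ summand $c_{\rm I_{K}}A$ with respect to the relativized TD set $Q\cap P[0,c_{\rm I_{K}}]$: since this summand is already locally type-$Q$ (being $\leq c_{Q}$), its purely non-$Q$ part vanishes, so only the type-$Q$ piece $c_{11}$ and the ``locally type-$Q$ but properly non-$Q$'' piece $c_{21}$ survive. Computing $c_{11}$ as the restricted type-cover of $Q$ relativized to $c_{\rm I_{K}}$ gives $c_{11}=c_{Q\cap C(A)}$ (using that $c_{Q\cap C(A)}\leq c_{Q}\leq c_{K\cap\gamma(K)}$ when $Q\subseteq K$, so the meet with $c_{\rm I_{K}}$ is redundant), and then $c_{21}=c_{\rm I_{K}}\wedge(c_{11})^{\perp}=c_{K\cap\gamma(K)}\wedge c_{Q}\wedge(c_{Q\cap C(A)})^{\perp}$. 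I expect the only real work --- and the main obstacle --- to be bookkeeping: verifying that type-covers and restricted type-covers of a relativized TD set $K\cap P[0,c]$ are computed by meeting the ambient type-covers with $c$, and carefully checking the chain of inequalities $c_{Q\cap C(A)}\leq c_{Q}\leq c_{K\cap\gamma(K)}\leq c_{K}$ that makes the displayed formulas collapse to the stated ones. None of this is deep, but it must be done consistently to match the exact expressions in the theorem statement.
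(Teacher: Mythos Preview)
Your approach is correct and matches the paper's own treatment, which simply cites \cite[Theorem 6.4]{FPType} and remarks that the result ``is obtained by combining the fundamental direct-decomposition theorems for $Q$ and for $K$''; you have spelled out exactly that combination.

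One slip to fix: the chain $c_{Q\cap C(A)}\leq c_{Q}\leq c_{K\cap\gamma(K)}\leq c_{K}$ you propose to check is not valid in general---the middle inequality $c_{Q}\leq c_{K\cap\gamma(K)}$ can fail (already when $Q=K$ and $c_{K\cap C(A)}<c_{K}$). What you actually need for $c_{11}=c_{Q\cap C(A)}$ is only $c_{Q\cap C(A)}\leq c_{Q}$ and $c_{Q\cap C(A)}\leq c_{K\cap\gamma(K)}$ separately, and both hold: the first is Lemma~\ref{lm:typecover}(iii), and the second follows because $c_{Q\cap C(A)}\in Q\cap C(A)\subseteq K\cap C(A)=K\cap\gamma(K)$, while $c_{K\cap\gamma(K)}$ is the largest element of $K\cap C(A)$ by Lemma~\ref{lm:typecover}(ii).
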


\begin{theorem} \label{th:type I/II/IIIprops}
With the notation of Theorem \ref{th:I/II/III}{\rm:}
\begin{enumerate}
\item $Q\subseteq P[0,c\,\sb{{\rm I}}]$ and $K\subseteq P[0,
 c\,\sb{{\rm I}}+c\,\sb{{\rm II}}]$.
\item If $P$ is complete, $Q$ is projective, and $Q$ is STD, 
 then $c\,\sb{{\rm I}}=\bigvee Q$ and $Q$ is orthodense in 
 $P[0,c\,\sb{{\rm I}}]$.
\item If $P$ is complete, $K$ is projective, and $K$ is STD, 
 then $c\,\sb{{\rm I}}+c\,\sb{{\rm II}}=\bigvee K$ and $K$ 
 is orthodense in $P[0,c\,\sb{{\rm I}}+c\,\sb{{\rm II}}]$.
\end{enumerate}
\end{theorem}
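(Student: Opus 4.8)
The plan is to derive all three parts from results already established for a single TD set, applied in turn to $Q$ and to $K$, together with the identifications $c\,\sb{\rm I}=c\sb{Q}$ and $c\,\sb{\rm I}+c\,\sb{\rm II}=c\sb{K}$ recorded in Theorem \ref{th:I/II/III}. For part (i), I would invoke Lemma \ref{lm:binarydecomp}(i), which says precisely that $Q\subseteq P[0,c\sb{Q}]$; since $c\sb{Q}=c\,\sb{\rm I}$ this gives $Q\subseteq P[0,c\,\sb{\rm I}]$. Replaying the same lemma with $K$ in place of $Q$ yields $K\subseteq P[0,c\sb{K}]=P[0,c\,\sb{\rm I}+c\,\sb{\rm II}]$, which is the second assertion of (i). (Alternatively one can cite Lemma \ref{lm:typecover}(iv): $c\sb{Q}$ is the smallest central projection $c$ with $Q\subseteq P[0,c]$, and likewise for $K$.) This part requires neither completeness of $P$ nor that $Q,K$ be STD, which matches the remark preceding the fundamental decomposition theorem.

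For parts (ii) and (iii), the engine is Corollary \ref{co:orthodense}: if $P$ is complete and $Q$ is a projective STD set, then $c\sb{Q}=\bigvee Q$ and $Q$ is orthodense in $P[0,c\sb{Q}]$. Applying this verbatim under the hypotheses of (ii) and substituting $c\sb{Q}=c\,\sb{\rm I}$ gives $c\,\sb{\rm I}=\bigvee Q$ and orthodensity of $Q$ in $P[0,c\,\sb{\rm I}]$. For (iii), I would apply the same corollary to $K$: its hypotheses ($P$ complete, $K$ projective, $K$ STD) are exactly those assumed, so $c\sb{K}=\bigvee K$ and $K$ is orthodense in $P[0,c\sb{K}]$; then substitute $c\sb{K}=c\,\sb{\rm I}+c\,\sb{\rm II}$ from Theorem \ref{th:I/II/III}. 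So each of (ii) and (iii) is a one-line translation of Corollary \ref{co:orthodense} through the explicit formulas for the type-covers.

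I do not anticipate a genuine obstacle here: the theorem is essentially a bookkeeping corollary that repackages the single-TD-set results for the pair $Q\subseteq K$ by reading off the coincidences $c\,\sb{\rm I}=c\sb{Q}$, $c\,\sb{\rm I}+c\,\sb{\rm II}=c\sb{K}$. The only point deserving a word of care is that the STD/projective/completeness hypotheses are attached to the \emph{correct} set in each part — (ii) constrains $Q$, (iii) constrains $K$ — and that in (iii) one genuinely needs $c\,\sb{\rm III}=(c\sb{K})\sp{\perp}$ (equivalently $c\sb{K}=c\,\sb{\rm I}+c\,\sb{\rm II}$) rather than anything about $Q$. If one prefers to bypass Corollary \ref{co:orthodense}, parts (ii) and (iii) also follow directly from Theorem \ref{th:TopProp13}(iii)--(iv) applied to $Q$ and to $K$ respectively, since an STD set is in particular an order ideal with $Q\sp{\downarrow}\subseteq Q$.
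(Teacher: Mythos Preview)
Your proposal is correct and matches the paper's own proof essentially line for line: the paper identifies $c\,\sb{\rm I}=c\sb{Q}$ and $c\,\sb{\rm I}+c\,\sb{\rm II}=c\sb{K}$, invokes Lemma~\ref{lm:typecover}(iv) for part (i), and then deduces (ii) and (iii) from Corollary~\ref{co:orthodense}. Your primary citation of Lemma~\ref{lm:binarydecomp}(i) in place of Lemma~\ref{lm:typecover}(iv) is a cosmetic difference only, and you already note the latter as an alternative.
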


\begin{proof}
(i) As $c\,\sb{{\rm I}}=c\sb{Q}$, and $c\,\sb{{\rm I}}+
c\,\sb{{\rm II}}=c\sb{Q}+(c\sb{K}-c\sb{Q})=c\sb{K}$, (i) follows 
from Lemma \ref{lm:typecover} (iv). In view of part (i), parts 
(ii) and (iii) follow from Corollary \ref{co:orthodense}.
\end{proof}

In Theorem \ref{th:I/II/III}, the unique five-fold direct-sum  
decomposition 
\[
A=c\,\sb{\rm I\sb{K}}A\oplus c\,\sb{\rm I\sb{{\widetilde K}}}A
 \oplus c\,\sb{\rm II\sb{K}}A\oplus c\,\sb
 {\rm II\sb{{\widetilde K}}}A\oplus c\,\sb{\rm III}A
\]
of $A$ into direct summands of types {\rm I}$\sb{K}$, {\rm I}$
\sb{{\widetilde K}}$, {\rm II}$\sb{K}$, {\rm II}$\sb
{{\widetilde K}}$ and {\rm III} is a generalization of the 
classic type-I/II/III decomposition for a von Neumann algebra 
(see Remark \ref{rm:vonNeumannalg} below); moreover,the additional 
decomposition $c\,\sb{\rm I\sb{K}}A=c\sb{11}A\oplus c\sb{21}A$ 
into direct summands of type-$Q$ and of type-$K$, locally type-$Q$, 
but properly non-$Q$ yields a six-fold direct decomposition of $A$,
\[
A=c\sb{11}A\oplus c\sb{21}A\oplus c\,\sb{\rm I\sb{{\widetilde K}}}A
 \oplus c\,\sb{\rm II\sb{K}}A\oplus c\,\sb{\rm II\sb{{\widetilde K}}}
 A\oplus c\,\sb{\rm III}A.
\]
Of course, if $A$ is a factor, then it is of precisely one of 
these six types.   

\begin{remark} \label{rm:vonNeumannalg}
If $R$ is a von Neumann algebra and $A$ is the synaptic algebra 
of all self-adjoint elements of $A$, then one obtains the classic 
type-I/II/III decomposition of $A$ (and also of $R$) by taking 
$Q=B$, the TD set of abelian projections in $A$, and taking $K$ 
to be the set of all finite projections in $A$.
\end{remark}

\begin{remark}
If $A$ is a JW-algebra, regarded as a synaptic algebra, then one 
obtains Topping's version of a type-I/II/III decomposition 
\cite[Theorem 13]{Top65} by taking $Q=B$ and $K=M$. 
\end{remark}


\begin{thebibliography}{99}

\bibitem{Alf} Alfsen, E.M., \emph{Compact Convex Sets and Boundary
Integrals}, Springer-Verlag, New York, 1971, ISBN 0-387-05090-6.

\bibitem{Beran} Beran, L., {\em Orthomodular Lattices, An Algebraic 
Approach}, Mathematics and its Applications, Vol. 18, D. Reidel Publishing 
Company, Dordrecht, 1985.

\bibitem{Chev} Chevalier, G., Around the relative center property in 
orthomodular lattices, \emph{Proc. Amer. Math. Soc.} {\bf 112} (1991), 
935--948.

\bibitem{OMLNote} Foulis, D.J., A note on orthomodular lattices, 
\emph{Portugal. Math.} {\bf 21} (1962) 65--72.

\bibitem{FSynap} Foulis, D.J., Synaptic algebras, \emph{Math. Slovaca} 
{\bf 60}, no. 5 (2010) 631--654.

\bibitem{FBeffect} Foulis, D.J. and Bennett, M.K., Effect algebras 
and unsharp quantum logics, \emph{Found. Physics} {\bf 24}, no. 10 
(1994) 1331--1352.

\bibitem{GHAlg1} Foulis, D.J. and Pulmannov\'{a}, S., Generalized
Hermitian Algebras, \emph{Int. J. Theor. Phys.} {\bf 48}, no. 5 (2009) 
1320--1333.

\bibitem{FPSynap} Foulis, D.J. and Pulmannov\'{a}, S., Projections in 
synaptic algebras, \emph{Order} {\bf 27}, no. 2 (2010) 235--257.

\bibitem{COEA} Foulis, D.J. and Pulmannov\'{a}, S., Centrally 
orthocomplete effect algebras, \emph{Algebra Univers.} {\bf 64} 
(2010) 283--307, DOI 10.007/s00012-010-0100-5.

\bibitem{FPType} Foulis, D.J. and Pulmannov\'{a}, S., Type-decomposition 
of an effect algebra, \emph{Found. Physics} {\bf 40} (2010) 1543--1565, 
DOI 10.1007/s10701-009-9344-3.

\bibitem{GHAlg2} Foulis, D.J. and Pulmannov\'{a}, S., Regular elements
in generalized Hermitian Algebras, \emph{Math. Slovaca} {\bf 61}, no. 2 
(2011) 155--172.

\bibitem{HandD} Foulis, D.J. and Pulmannov\'{a}, S., Hull mappings and 
dimension effect algebras, \emph{Math. Slovaca} {\bf 61}, no. 3 (2011) 
1--38.

\bibitem{SymSA} Foulis, D.J., and Pulmannov\'{a}, S., Symmetries in 
synaptic algebras, ArXiv:1304.4378. 

\bibitem{GPBB} Gudder, S., Pulmannov\'{a}, S., Bugajski, S., and
Beltrametti, E., Convex and linear effect algebras, \emph{Rep. Math.
Phys.} {\bf 44}, no. 3 (1999) 359--379.

\bibitem{Haag} Haag, R., \emph{Local Quantum Physics, Fields, 
Particles, Algebras}, Springer, Berlin, 1992.

\bibitem{SSH64} Holland, S.S, Jr., Distributivity and perspectivity 
in orthomodular lattices, \emph{Trans. Amer. Math. Soc.} {\bf 112} 
(1964) 330-343.

\bibitem{SSH70} Holland, S.S., Jr., An $m$-orthocomplete orthomodular
lattice is $m$-complete, \emph{Proc. Amer. Math. Soc.} {\bf 24} (1970)
716--718.

\bibitem{JPorthocompl} Jen\v{c}a, G. and Pulmannov\'{a}, S., 
Orthocomplete effect algebras, \emph{Proc. Amer. Math. Soc.} 
{\bf 131} (2003) 2663--2671.

\bibitem{Kalm} Kalmbach, G., \emph{Orthomodular Lattices}, Academic Press,
London, New York, 1983.

\bibitem{Kapcg} Kaplansky, I., Any orthocomplemented complete modular 
lattice is a continuous geometry, \emph{Ann. of Math.} {\bf 61} 
(1955) 524--541.

\bibitem{McC} McCrimmon, K.,  \emph{A taste of Jordan algebras},
Universitext, Springer-Verlag, New York, 2004, ISBN: 0-387-95447-3.

\bibitem{vNqm} Neumann, J. von, \emph{Mathematische Grundlagen der 
Quantenmechanik} Springer, Heidelberg, 1932.

\bibitem{vNcg} Neumann, J. von, \emph{Continuous geometry}, Princeton Univ. 
Press, Princeton 1960.

\bibitem{PuNote} Pulmannov\'{a}, S., A note on ideals in synaptic algebras, 
\emph{Math. Slovaca} {\bf 62}, no. 6 (2012) 1091-–1104.

\bibitem{Redei} R\'{e}dei, Mikl\'{o}s, Why John von Neumann did not like 
the Hilbert space formalism of quantum mechanics (and what he liked 
instead), \emph{Stud. Hist. Phil. Mod. Physics} {\bf 27}, no. 4 (1996) 
493--510.

\bibitem{ZRlatEA} Rie\v{c}anov\'{a}, Z., Subdirect decompositions of lattice 
effect algebras, \emph{Internat. J. Theoret. Phys.} {\bf 42}, no. 7 (2003)
1425–-1433. 

\bibitem{Top65} Topping, D.M.,  \emph{Jordan Algebras of Self-Adjoint 
Operators}, A.M.S. Memoir {\bf No 53} AMS, Providence, Rhode Island, 1965.

\end{thebibliography}
\end{document}